\patchcmd{\section}{\scshape}{\bfseries\Large}{}{}
\renewcommand{\@secnumfont}{\bfseries\Large}
\newtheorem{theorem}{Theorem}
\newtheorem{lemma}[theorem]{Lemma}
\newtheorem{proposition}[theorem]{Proposition}
\theoremstyle{remark}
\newcommand{\real}{\mathbb{R}}
\newcommand{\R}{\mathbb{R}}
\renewcommand{\leq}{\leqslant}
\renewcommand{\geq}{\geqslant}
\def\tiu{\widetilde{u}}
\def\Z{\partial_Z }
\def\zm{\partial^m_Z }
\def\N{\mathbb{N}}
\newcommand\ZZ[1]{\partial_{Z_{{#1}}}}
\def\cz{\mathscr{Z}}
\def\cz{\mathcal{Z}}
\def\al{\alpha}
\def\be{\beta}
\def\phi{\varphi}
\def\PP{\mathbb{P}}
\DeclareMathOperator{\dive}{div}
\DeclareMathOperator{\curl}{curl}
\def\ep{\varepsilon}
\def\om{\omega}
\def\Om{\Omega}
\def\bdry{{\partial\Omega}}
\def\oma{\omega^\alpha}
\def\dt{\partial_{t}}
\def\tan{_{tan}}
\def\nor{_{nor}\,}
\newcommand\nl[2]{\|#2\|_{L^{#1}}}
\newcommand\nh[2]{\|#2\|_{H^{#1}}}
\newcommand\hco[2]{\|#2\|_{H^{#1}_{co}}}
\newcommand\bighco[2]{\Bigl\|#2\Bigr\|_{H^{#1}_{co}}}
\newcommand\hcom[2]{\|#2\|_{H^{#1}_{co}}}
\newcommand\xm[2]{\|#2\|_{X^{#1}}}
\newcommand\ym[2]{\|#2\|_{Y^{#1}}}
\newcommand\lipco[1]{\|#1\|_{W^{1,\infty}_{co}}}
\newcommand\uco[2]{\|#2\|_{W^{#1,\infty}_{co}}}
\newcommand\biguco[2]{\Bigl\|#2\Bigr\|_{W^{#1,\infty}_{co}}}
\begin{document}
\title{Uniform time of existence for the alpha Euler equations}
\author[Busuioc, Iftimie, Lopes Filho and Nussenzveig Lopes]{{A. V. Busuioc,} {D. Iftimie,}
{M. C. Lopes Filho and}
{H. J. Nussenzveig Lopes}
}

\address[A.V. Busuioc]{Université de Lyon, Université de Saint-Etienne  --
CNRS UMR 5208 Institut Camille Jordan --
Faculté des Sciences --
23 rue Docteur Paul Michelon --
42023 Saint-Etienne Cedex 2, France}
\email{valentina.busuioc@univ-st-etienne.fr}

\address[D. Iftimie]{Universit\'e de Lyon, Universit\'e Lyon 1 --
CNRS UMR 5208 Institut Camille Jordan --
43 bd. du 11 Novembre 1918 --
Villeurbanne Cedex F-69622, France.}
\email{iftimie@math.univ-lyon1.fr}
\urladdr{http://math.univ-lyon1.fr/\~{}iftimie}

\address[M. C. Lopes Filho]{Instituto de Matem\'atica\\Universidade Federal do Rio de Janeiro\\Cidade Universit\'aria -- Ilha do Fund\~ao\\Caixa Postal 68530\\21941-909 Rio de Janeiro, RJ -- BRAZIL.}
\email{mlopes@im.ufrj.br}

\address[H. J. Nussenzveig Lopes]{Instituto de Matem\'atica\\Universidade Federal do Rio de Janeiro\\Cidade Universit\'aria -- Ilha do Fund\~ao\\Caixa Postal 68530\\21941-909 Rio de Janeiro, RJ -- BRAZIL.}
\email{hlopes@im.ufrj.br}

\begin{abstract}
We consider the $\alpha$-Euler equations on a bounded three-dimensional domain with frictionless Navier boundary conditions. Our main result is the existence of a strong solution on a positive time interval, uniform in $\alpha$, for $\alpha$ sufficiently small. Combined with the convergence result in \cite{busuioc_incompressible_2012}, this implies convergence of solutions of the $\al$-Euler equations to solutions of the incompressible Euler equations when $\al \to 0$. In addition, we obtain a new result on local existence of strong solutions for the incompressible Euler equations on bounded three-dimensional domains. The proofs are based on new {\it a priori} estimates in conormal spaces.
\end{abstract}

\maketitle

\date{\today}


\section{Introduction}
The $\al$-Euler equations, $\al > 0$, are a system of equations given by:
\begin{equation}\label{maineq}
  \partial_t (u-\al\Delta u)+u\cdot\nabla(u-\al\Delta u)+\sum_j(u-\al\Delta u)_j\nabla u_j=-\nabla p, \qquad \dive u=0,
\end{equation}
where $u=(u_1,u_2,u_3)$ is the velocity and $p$ is the scalar pressure.

These equations arise as the zero-viscosity case of the second grade fluids, a model of non-Newtonian fluids introduced in \cite{dunn_thermodynamics_1974} as one among a hierarchy of models of viscoelastic fluids called fluids of differential type.
The $\al$-Euler equations are also used as a sub-grid scale model in turbulence and have been found to possess deep geometric
significance, see \cite{marsden_geometry_2000}.

Note that, if we formally set $\al=0$ in \eqref{maineq}, then we obtain the incompressible Euler equations:
\begin{equation}\label{eulereq}
  \partial_t u+u\cdot\nabla u=-\nabla p, \qquad \dive u=0,
\end{equation}
since $\sum_j u_j \nabla u_j$ is a gradient and can be absorbed by the pressure.

Existence of smooth solutions for system \eqref{maineq} has been established locally in time, in several contexts, see \cite{busuioc_second_2002}, \cite{marsden_geometry_2000,shkoller_boundary_2000} and \cite{busuioc_second_2003}. Global existence, however, is an open problem, a situation which parallels the outstanding open problem of existence of smooth solutions for the $3$-dimensional Euler equations \eqref{eulereq}.
The main concern of the present work is the existence of smooth solutions of the $\al$-Euler equations \eqref{maineq} up to a time which is uniform with respect to $\al$.

This problem needs to be considered in several fluid domains. In the case of flow in all of $\real^3$, existence of a smooth solution, with smooth initial data, was established for a time at least as long as the time of existence for $3D$ Euler, see \cite{linshiz_convergence_2010}. For flow in a smooth, bounded domain with no-slip boundary conditions ($u=0$), the problem remains open. In this paper we consider flow in a smooth, bounded domain $\Om \subset \real^3$, with frictionless Navier boundary conditions, i.e.
\begin{equation}\label{navier}
u\cdot n=0,\quad [D(u)n]\bigl|_{tan}=0  \quad\text{on }\partial\Om,
\end{equation}
where $D(u)$ is the deformation tensor defined by $D(u)=\frac12\bigl((\nabla u)+(\nabla u)^t\bigr)$ and the subscript ``tan"  denotes the tangential part.
Our main result is to show that, given a sufficiently smooth initial velocity $u_0$, there exists a solution of \eqref{maineq} satisfying \eqref{navier}, for a time which is independent of $\al$.

This analogous question may be posed for the Navier-Stokes equations,
\begin{equation}\label{nunseq}
  \partial_t u+u\cdot\nabla u=-\nabla p + \nu \Delta u, \qquad \dive u=0,
\end{equation}
namely, existence of  solutions for a time independent of viscosity $\nu$. For this problem, in the case of the flow in full-space it is classical that, if the initial velocity is sufficiently smooth, then a smooth solution exists up to a time uniform with respect to $\nu$, see \cite{majda_vorticity_2002}. For flows in a smooth, bounded domain, under no slip boundary conditions, a uniform-in-$\nu$ time of existence is an open problem. In recent work, N. Masmoudi and F. Rousset considered the case of flows in a smooth, bounded fluid domain under Navier boundary conditions with friction coefficient $\beta \in \real$:
\begin{equation}\label{navierbeta}
u\cdot n=0,\quad [D(u)n]\bigl|_{tan} + \beta u\bigl|_{tan}=0 \quad\text{on }\partial\Om.
\end{equation}
They showed, in \cite{masmoudi_uniform_2012-1}, that there is a time-of-existence which is uniform with respect to $\nu$ and, in addition, that the vanishing viscosity limit holds. Their analysis relied on estimates in conormal Sobolev spaces, where regularity is measured only via tangential derivatives. The conormal spaces are a well-known tool in the study of symmetric hyperbolic systems, see for instance \cite{gues_probleme_1990,nishitani_regularity_2000}.
In the present article we adapt the ideas developed in \cite{masmoudi_uniform_2012-1} to our problem.

We draw two important corollaries from the analysis contained in this paper. The first is associated with the limit as $\al \to 0$ of solutions of the $\al$-Euler equations.
To contextualize this first corollary we briefly survey the known results regarding
the limiting behavior of $\al$-Euler as $\al \to 0$. In the absence of boundaries the convergence to the Euler equations is relatively simple and was proved in \cite{linshiz_convergence_2010}; see also \cite{busuioc_incompressible_2012}. In the presence of a boundary, and under the no-slip boundary condition, the convergence was treated in \cite{lopes_filho_convergence_2015}, but only in the $2D$ case; the three-dimensional case remains open.

In the case of the frictionless Navier boundary conditions, the authors established, see Theorem 5 in \cite{busuioc_incompressible_2012}, the $L^2$-convergence, as $\al \to 0$, under  the additional hypothesis that weak $H^1$ solutions for the $\al$-Euler equations exist on a time interval independent of $\al$. This hypothesis is known to hold true in dimension two and also for axisymmetric solutions in dimension three. Now, putting together the main result in the present work with \cite[Theorem 5]{busuioc_incompressible_2012}, yields a complete proof of $L^2$-convergence, as $\al \to 0$, for a general bounded three-dimensional smooth domain.

The second corollary is a new local-in-time existence result for the $3D$-Euler equations in a conormal Sobolev space. We remark that this result is an improvement with respect to the existence part of \cite[Theorem 2]{masmoudi_uniform_2012-1}.

In addition to the uniform-in-$\al$ time-of-existence and the two corollaries mentioned above, the proof of our main result requires certain elliptic regularity estimates in conormal spaces, something which is not available in the literature in our context, and which we establish here. We also present a new approximation procedure, within the class of divergence free vector fields with sufficient regularity, measured in conormal spaces.

Next, we give precise statements of our results. We denote by $H^m_{co}$ the space of square integrable functions such that all tangential derivatives of order $\leq m$ are also square integrable. The space $X^m$ is the same as $H^m_{co}$ except that we allow one of the derivatives to be non-tangential. The $W^{m,\infty}_{co}$ is the space of bounded functions  such that all tangential derivatives of order $\leq m$ are also bounded.(Precise definitions of $H^m_{co}$, $X^m$  and $W^{m,\infty}_{co}$ will be given in Section \ref{conormal}.) Let us also introduce $\oma=\curl u-\al\Delta\curl u$.

Our main result is the following theorem. We will assume in the sequel that $\Om$ is a smooth and bounded  open set of $\R^3$.
\begin{theorem}[uniform time of existence]\label{uniformtime}
Let $u_0$ be divergence free and  verifying the Navier boundary conditions \eqref{navier}. Assume moreover that $u_0\in L^2$ and $\oma_0\in H^{m-1}_{co}\cap W^{1,\infty}_{co}$ where $m\geq5$. There exists $\al_0>0$ and a time $T>0$ independent of $\al$ such that for all $0<\al<\al_0$ there exists a solution $u$ of \eqref{maineq} and \eqref{navier} bounded in $L^\infty(0,T;X^m\cap W^{1,\infty})$ independently of $\al$.  Moreover, the time existence $T$ depends only on $\nl2{u_0}$, $\lipco{\oma_0}$ and $\hco {m-1}{\oma_0}$.
\end{theorem}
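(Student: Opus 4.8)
The plan is to reformulate \eqref{maineq} in terms of the vorticity-type variable $\oma=\curl(u-\al\Delta u)$, for which the pressure disappears and the $\al$-independent transport--stretching structure of the three-dimensional Euler equations becomes manifest; then to prove a priori bounds for the conormal norms $\xm{m}{u}$ and $\lip{u}$ that are uniform in $\al$ on a time interval depending only on the stated data; and finally to realise these bounds on genuine solutions by a regularisation scheme compatible with the divergence-free and Navier constraints, passing to the limit with the uniform estimates.

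Write $v=u-\al\Delta u$, so that \eqref{maineq} reads $\partt v+u\cdot\nabla v+\sum_j v_j\nabla u_j=-\nabla p$. Since $\dive u=0$, taking the curl annihilates the pressure and one obtains
\begin{equation*}
\partt\oma+u\cdot\nabla\oma=\oma\cdot\nabla u ,
\end{equation*}
which is exactly the Euler vorticity equation; all the $\al$-dependence is hidden in the recovery of $u$ from $\oma$. That recovery is an elliptic step: from $\oma$, together with $\dive u=0$ and the boundary information carried by \eqref{navier}, one solves a div--curl system and then inverts $I-\al\Delta$. The crucial point --- and one of the new ingredients of the paper --- is that, although $(I-\al\Delta)^{-1}$ recovers two derivatives only at a cost $\al^{-1}$, a commutator analysis with tangential fields shows it to be bounded \emph{uniformly in $\al$} on $L^2$ and on the conormal spaces and to map $W^{1,\infty}_{co}$ into itself; combined with the div--curl estimate, which loses one normal derivative (whence the space $X^m$ rather than $H^m_{co}$), this yields, uniformly in $\al<\al_0$,
\begin{equation*}
\xm{m}{u}+\lip{u}\le P\bigl(\hco{m-1}{\oma},\ \lipco{\oma},\ \nl2{u}\bigr).
\end{equation*}
The passage to $\lip{u}$ also uses the conormal div--curl inequality $\|\nabla u\|_{L^\infty}\lesssim\lipco{u}+\|\curl u\|_{L^\infty}$, which upgrades the $W^{1,\infty}_{co}$ information to genuine Lipschitz control of $u$ and so circumvents the Beale--Kato--Majda-type obstruction; moreover $\nl2{u}$ is controlled uniformly in $\al$ by the energy identity for \eqref{maineq}--\eqref{navier}.

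The core of the argument is the conormal energy estimate for the transport--stretching equation. Applying tangential fields $Z^\gamma$ with $|\gamma|\le m-1$ and pairing $Z^\gamma\bigl(\partt\oma+u\cdot\nabla\oma-\oma\cdot\nabla u\bigr)$ with $Z^\gamma\oma$ in $L^2$, the convective term is skew-symmetric modulo boundary integrals; because $u\cdot n=0$ these integrals vanish or collapse to lower-order boundary traces, which are controlled through the algebraic relation that \eqref{navier} forces between the boundary trace of $\curl u$ and the Weingarten map of $\bdry$ applied to $u$. The commutators $[Z^\gamma,u\cdot\nabla]\oma$ and the stretching term $Z^\gamma(\oma\cdot\nabla u)$ are estimated by conormal product and commutator inequalities, the single normal derivative in $\xm{m}{u}$ being expressed via $\curl u$, $\dive u$ and tangential derivatives of $u$, so that the right-hand side is bounded by $P\bigl(\lipco{u}+\|\nabla u\|_{L^\infty}\bigr)\bigl(\hco{m-1}{\oma}^2+\xm{m}{u}^2\bigr)$. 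A parallel estimate in $W^{1,\infty}_{co}$, obtained by differentiating the equation once tangentially and using the anisotropic conormal Sobolev embeddings (valid in dimension three when $m\ge5$), propagates $\lipco{\oma}$. Inserting the elliptic bounds of the previous step closes a differential inequality for $t\mapsto\hco{m-1}{\oma}(t)+\lipco{\oma}(t)+\xm{m}{u}(t)$, and a continuation argument produces a time $T>0$, depending only on $\nl2{u_0}$, $\lipco{\oma_0}$ and $\hco{m-1}{\oma_0}$, on which these quantities remain bounded for every $\al<\al_0$.

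Finally, to convert the a priori bounds into an existence statement one regularises $u_0$ within the class of divergence-free fields satisfying \eqref{navier} with conormal regularity, invokes the known local existence of smooth solutions of \eqref{maineq}--\eqref{navier}, propagates the $\al$-uniform bounds on the common interval $[0,T]$, and passes to the limit by Aubin--Lions compactness together with weak-$*$ lower semicontinuity of the conormal norms, obtaining a solution bounded in $L^\infty(0,T;X^m\cap W^{1,\infty})$ independently of $\al$. The principal obstacle lies at the boundary: the $\al$-uniform elliptic regularity in conormal spaces near $\bdry$, and the bookkeeping of the boundary terms produced in the vorticity energy estimates by the Navier condition, are the delicate points forming the technical heart of the proof.
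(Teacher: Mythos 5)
Your proposal is correct and follows essentially the same route as the paper: the transport--stretching equation for $\oma$, uniform-in-$\al$ elliptic estimates in conormal spaces whose boundary terms are controlled via the identities forced by the Navier condition, conormal energy and $W^{1,\infty}_{co}$ estimates closed by a Gronwall argument, and an approximation of the data compatible with the divergence-free and Navier constraints. The only minor imprecision is the placement of the hypothesis $m\ge5$: in the paper it is not needed for the conormal embeddings (which require only $X^2\subset L^\infty$) but rather to absorb the term $\sqrt\al\,\uco4u$ arising in the $W^{1,\infty}_{co}$ elliptic estimate for $1-\al\Delta$, by trading the factor $\sqrt\al$ against one extra conormal derivative of $\oma$.
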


Combining this theorem with \cite[Theorem 5]{busuioc_incompressible_2012} yields, as mentioned, a result on convergence to a solution of the Euler equations in $\Om$ subject to the non-penetration boundary condition
\begin{equation}\label{tangent}
u\cdot n=0,  \quad\text{on }\partial\Om.
\end{equation}

\begin{theorem}[convergence]\label{convergence}
Let $u_0$ be divergence free and  verifying the Navier boundary conditions \eqref{navier}. Assume that $u_0\in H^3$ and $\curl u_0, \Delta \curl u_0\in H^4_{co}\cap W^{1,\infty}_{co}$. Let $\overline u$ be the solution of the incompressible Euler equations \eqref{eulereq} and \eqref{tangent} with initial data $u_0$. There exists some time $T$ independent of $\al$ and a solution $u^\al$ of \eqref{maineq} and \eqref{navier} on $[0,T]$ with initial data $u_0$ such that
\begin{equation*}
\lim_{\al\to0}\|u^\al-\overline u\|_{L^\infty(0,T;L^2)}=0.
\end{equation*}
\end{theorem}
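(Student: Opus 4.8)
The plan is to deduce Theorem~\ref{convergence} by combining Theorem~\ref{uniformtime} with \cite[Theorem 5]{busuioc_incompressible_2012}, so the substance of the argument lies in matching the hypotheses and tracking which constants are $\al$-independent. First I would check that the assumptions of Theorem~\ref{convergence} on $u_0$ imply those of Theorem~\ref{uniformtime} with $m=5$. Indeed $u_0\in H^3\subset L^2$, and since $H^4_{co}\cap W^{1,\infty}_{co}$ is a vector space containing both $\curl u_0$ and $\Delta\curl u_0$, we get $\oma_0=\curl u_0-\al\Delta\curl u_0\in H^4_{co}\cap W^{1,\infty}_{co}$. More importantly, for $\al\leq1$ the triangle inequality gives
\begin{equation*}
\hco4{\oma_0}+\lipco{\oma_0}\leq \hco4{\curl u_0}+\hco4{\Delta\curl u_0}+\lipco{\curl u_0}+\lipco{\Delta\curl u_0},
\end{equation*}
a bound that does not depend on $\al$; since the time $T$ produced by Theorem~\ref{uniformtime} depends only on $\nl2{u_0}$, $\lipco{\oma_0}$ and $\hco{m-1}{\oma_0}$, it can be chosen uniformly over $\al\in(0,\al_0)$.

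Next, Theorem~\ref{uniformtime} furnishes, for each such $\al$, a solution $u^\al$ of \eqref{maineq}--\eqref{navier} with initial datum $u_0$ on the common interval $[0,T]$, bounded in $L^\infty(0,T;X^m\cap W^{1,\infty})$ uniformly in $\al$. Because $X^m\hookrightarrow H^1$ for $m\geq1$ --- a full first derivative is controlled, the tangential ones via the $H^1_{co}$ part and the remaining one via the extra allowed non-tangential derivative --- the family $(u^\al)$ is in particular bounded in $L^\infty(0,T;H^1)$ uniformly in $\al$, and each $u^\al$ is a strong solution of the $\al$-Euler equations, hence in particular a weak $H^1$ solution. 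This is exactly the standing hypothesis of \cite[Theorem 5]{busuioc_incompressible_2012}, namely the existence of weak $H^1$ solutions on a time interval independent of $\al$.

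It then remains to apply that theorem, which yields a solution $\ub$ of the Euler equations \eqref{eulereq} with the non-penetration condition \eqref{tangent} and initial datum $u_0$ such that $\|u^\al-\ub\|_{L^\infty(0,T;L^2)}\to0$ as $\al\to0$. To be sure this reference solution is defined on all of $[0,T]$ and is unambiguous, I would note that the a priori estimates underlying Theorem~\ref{uniformtime} also hold, and in fact simplify, at $\al=0$, so that \eqref{eulereq}--\eqref{tangent} admits a solution on the same $[0,T]$ lying in $L^\infty(0,T;X^m\cap W^{1,\infty})$; this is precisely the local existence corollary announced in the introduction, and since such a solution is Lipschitz in space it is the unique weak solution of \eqref{eulereq}--\eqref{tangent} with this datum and finite energy, so $\ub$ is well-defined and the statement follows. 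The one point requiring genuine care, as opposed to routine embedding bookkeeping, is this synchronization of the existence intervals: without the $\al=0$ estimates one would only get convergence on the (a priori shorter) interval on which a classical $H^3$-Euler solution is known to exist, so the uniform-in-$\al$ conormal estimates are what make the conclusion hold on the full $[0,T]$.
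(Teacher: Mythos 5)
Your proposal is correct and follows exactly the route the paper takes: the paper disposes of Theorem~\ref{convergence} in one line as a direct consequence of Theorem~\ref{uniformtime} combined with \cite[Theorem 5]{busuioc_incompressible_2012}, and your hypothesis-matching (the $\al$-uniform bound on $\oma_0$ in $H^4_{co}\cap W^{1,\infty}_{co}$ for $\al\leq 1$, hence a uniform $T$ and a uniform $L^\infty(0,T;H^1)$ bound supplying the weak $H^1$ solutions required by the cited theorem) is precisely the bookkeeping the paper leaves implicit. Your closing remark about using the $\al=0$ estimates to guarantee that $\overline u$ exists on the same interval $[0,T]$ is a sensible extra precaution that the paper does not spell out.
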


As a particular case of Theorem \ref{uniformtime} (case $\al=0$) we obtain a new existence result for the incompressible Euler equations.

\begin{theorem}\label{euler}
Let $u_0$ be divergence free, tangent to the boundary and such that $u_0\in X^4$ and $\curl u_0\in W^{1,\infty}_{co}$. Then there exists a unique local in time solution $u$ of  the incompressible Euler equations \eqref{eulereq} and \eqref{tangent} with initial data $u_0$ such that $u\in L^\infty(0,T;X^4\cap W^{1,\infty})$ and $\curl u\in L^\infty(0,T;W_{co}^{1,\infty})$.
\end{theorem}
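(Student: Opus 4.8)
The plan is to obtain Theorem \ref{euler} as the limiting case $\al \to 0$ of Theorem \ref{uniformtime}, which requires identifying the correct initial-data hypotheses and then upgrading the weak-$*$ limit to an actual solution of \eqref{eulereq}--\eqref{tangent}, together with a separate uniqueness argument. First I would check that the hypotheses of Theorem \ref{euler} imply (a version of) those of Theorem \ref{uniformtime}. If $u_0 \in X^4$ then $u_0 \in L^2$ and $\curl u_0 \in H^3_{co}$; combined with $\curl u_0 \in W^{1,\infty}_{co}$, this gives $\oma_0 = \curl u_0 - \al \Delta \curl u_0 = \curl u_0$ when $\al = 0$, so the quantities $\nl2{u_0}$, $\hco{m-1}{\oma_0}$ and $\lipco{\oma_0}$ controlling the existence time $T$ in Theorem \ref{uniformtime} are all finite with $m = 4$ (inspecting the proof of Theorem \ref{uniformtime} at $\al = 0$, the term $\al \Delta u$ simply drops out of \eqref{maineq}, so the a priori estimates require one fewer conormal derivative than in the $\al > 0$ case; alternatively one applies Theorem \ref{uniformtime} with $m=5$ after a density argument, and then notes the bound only uses $X^4$). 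Thus Theorem \ref{uniformtime} with $\al = 0$ directly produces a solution $u \in L^\infty(0,T; X^4 \cap W^{1,\infty})$ with $\curl u \in L^\infty(0,T; W^{1,\infty}_{co})$ of \eqref{eulereq} and \eqref{tangent}; the boundary condition \eqref{navier} degenerates to \eqref{tangent} in the limit since the $[D(u)n]|_{tan} = 0$ condition, which in the $\al$-Euler setting is forced by the structure of \eqref{maineq}, is no longer needed to make sense of \eqref{eulereq}.

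If instead one prefers a self-contained argument not quoting the $\al=0$ case verbatim, the route is the standard one: regularize the initial data, solve the regularized problems, derive the conormal a priori estimates of the paper uniformly in the regularization parameter, and pass to the limit. The core a priori estimate bounds $\frac{d}{dt}\bigl(\|u\|_{X^4}^2 + \lipco{\curl u}^2\bigr)$ by a polynomial in itself, using the vorticity formulation $\partial_t \curl u + u \cdot \nabla \curl u = \curl u \cdot \nabla u$ (the $3D$ vortex-stretching identity), the conormal product and commutator estimates from the conormal-calculus section, and the elliptic regularity estimates in conormal spaces established in the paper to recover $u$ from $\curl u$ together with $\dive u = 0$ and $u \cdot n = 0$. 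The normal derivative of the tangential components of $u$, which is the delicate quantity in conormal estimates, is controlled through $\curl u$ and $\dive u = 0$ exactly as in the $\al > 0$ analysis; since there is no $\al \Delta u$ term there is actually no boundary layer to fight, and the estimate closes on a time interval depending only on the initial $X^4$ and $W^{1,\infty}_{co}$ norms.

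For uniqueness, I would argue at the $L^2$ level. Given two solutions $u_1, u_2$ in the stated class with the same initial data, set $w = u_1 - u_2$; then $\partial_t w + u_1 \cdot \nabla w + w \cdot \nabla u_2 = -\nabla(p_1 - p_2)$, $\dive w = 0$, $w \cdot n = 0$. Pairing with $w$ in $L^2(\Om)$, the pressure term vanishes after integration by parts (using $\dive w = 0$ and $w \cdot n = 0$ on $\partial\Om$), the transport term $\int u_1 \cdot \nabla w \cdot w$ vanishes since $\dive u_1 = 0$ and $u_1 \cdot n = 0$, and the remaining term is bounded by $\|\nabla u_2\|_{L^\infty} \|w\|_{L^2}^2 \leq C \|u_2\|_{X^4 \cap W^{1,\infty}} \|w\|_{L^2}^2$ using the embedding of the solution class into $W^{1,\infty}$. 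Grönwall then gives $w \equiv 0$ on $[0,T]$. The main obstacle in the whole argument is not any single estimate but the bookkeeping needed to see that the conormal a priori estimates of Theorem \ref{uniformtime} genuinely close at the reduced regularity level $X^4$ when $\al = 0$ — i.e., that setting $\al = 0$ really does save one conormal derivative — and that the approximation scheme (the divergence-free conormal approximation procedure announced in the introduction) produces approximate solutions for which these estimates hold uniformly; once that is in place, existence and the continuity-in-time needed to attain the initial data follow by weak-$*$ compactness and a standard Aubin--Lions-type argument, and uniqueness is the elementary energy computation above.
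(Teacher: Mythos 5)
Your proposal follows essentially the same route as the paper: the \textit{a priori} estimates of Proposition \ref{apriori1} are rerun at $\al=0$, where the Navier boundary condition and the hypothesis $m\geq5$ (used only in \eqref{m5}) become unnecessary so that $m=4$ suffices (this is Proposition \ref{apriori2}); the initial data are then approximated by smooth divergence-free fields via the conormal decomposition (Proposition \ref{approxeuler}), smooth solutions are continued by Beale--Kato--Majda up to a time controlled uniformly by the estimates, and one passes to the limit. Your explicit $L^2$ uniqueness argument is a correct, standard supplement that the paper leaves implicit.
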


As noted, Theorem \ref{euler} is an improvement over the existence result for the Euler equations obtained in \cite{masmoudi_uniform_2012-1}, as we assume $u_0\in X^4$ while in \cite{masmoudi_uniform_2012-1} the authors need $u_0\in X^7$. Note also that, compared to the classical $H^3$ existence result for the Euler equation, Theorem \ref{euler} requires only one additional derivative.

The structure of the paper is the following. In the next section we introduce notation and prove an identity related to the Navier boundary conditions. In Section 3 we give precise definitions of the conormal Sobolev spaces and we discuss relevant properties. Section 4 contains elliptic regularity estimates in conormal spaces. We prove, in Section 5,  \textit{a priori} estimates, in conormal spaces, on the solutions of \eqref{maineq}. In Section 6 we construct a sequence of approximate solutions and we use the \textit{a priori} estimates from Section 5  to obtain Theorems \ref{uniformtime} and \ref{euler}. We add some concluding remarks in Section 7.

\section{Some notations and preliminary results}
Let
\begin{equation*}
\om=\curl u\qquad\text{and}\qquad \oma=\om-\al\Delta\om.
\end{equation*}
Applying the curl to relation \eqref{maineq} implies the following equation for the vorticity $\oma$:
\begin{equation}\label{eqvort}
  \dt\oma+u\cdot\nabla\oma-\oma\cdot\nabla u=0.
\end{equation}

We denote by $n$ a smooth vector field defined on $\overline \Om$ such that its restriction to the boundary is the unitary exterior normal to the boundary. We assume moreover that $\|n\|=1$ in a small neighborhood of the boundary $\Om_\delta=\{x\in\overline\Om\ ;\ d(x,\partial\Om)\leq\delta\}$. We define $\partial_n=n\cdot\nabla$ inside $\Om$ too.  We introduce a smooth function $d:\overline\Om\to\R_+$ such that $d$ never vanishes in $\Om$ and such that $d(x)=d(x,\partial\Om)$ for all $x\in\Om_\delta$. In other words,  $d$ is a smooth version of $d(x,\partial\Om)$.

For a vector field $w$ we define
\begin{equation*}
w_{tan}=w\times n\quad\text{and}\quad w_{nor}=w\cdot n.
\end{equation*}
We observe that for any vector fields  $w$ and $\widetilde w$ we have the following relation:
\begin{equation*}
w\cdot\widetilde w=w_{tan}\cdot\widetilde w_{tan}+w_{nor}\widetilde w_{nor}\quad\text{on }\Om_\delta.
\end{equation*}
More generally, the above relation holds true everywhere if one multiplies  the LHS by $\|n\|^2$.

We now recall  some identities related to the Navier boundary conditions. The proof is included for completeness' sake.

\begin{lemma}\label{ident}
Suppose that $u$ is divergence free and verifies the Navier boundary conditions \eqref{navier}. Then
\begin{equation}\label{ident0}
\om\times n =-2n\times\sum_iu_i(n\times\nabla)n_i \equiv F(u)\quad\text{on }\bdry
\end{equation}
and
\begin{equation*}
n\cdot \partial_n\om=(n\times\nabla)\cdot F(u)  -  (n\times \nabla)\cdot u\,\dive n\equiv G(u,(n\times \nabla)u)\quad\text{on }\bdry
\end{equation*}

\end{lemma}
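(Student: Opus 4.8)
The plan is to derive both identities by elementary vector calculus, restricting pointwise relations to $\bdry$ and invoking the boundary conditions only at the last moment.

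For \eqref{ident0} I would start from the pointwise identity $2D(u)n=\om\times n+2\sum_j n_j\nabla u_j$ in $\Om$, which one checks in coordinates (the $i$-th entry of $2D(u)n$ is $\sum_j n_j\partial_j u_i+\sum_j n_j\partial_i u_j$, while $(\om\times n)_i=\sum_j n_j\partial_j u_i-\sum_j n_j\partial_i u_j$). Rewriting $\sum_j n_j\nabla u_j=\nabla(u\cdot n)-\sum_j u_j\nabla n_j$ and restricting to $\bdry$, I take the part tangent to $\bdry$: the left-hand side drops out by \eqref{navier} (so $D(u)n$ is purely normal), the contribution of $\nabla(u\cdot n)$ vanishes because $u\cdot n\equiv0$ on the whole of $\bdry$ forces its surface gradient to be zero, and $\om\times n$ is already tangent to $\bdry$. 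This gives $\om\times n=2\sum_j u_j[\nabla n_j]_{tan}$ on $\bdry$, and since $\|n\|=1$ near the boundary one has $[\nabla n_j]_{tan}=\nabla n_j-(\partial_n n_j)n=-\,n\times(n\times\nabla n_j)$, so the right-hand side becomes $-2n\times\sum_j u_j(n\times\nabla)n_j=F(u)$.

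For the second identity the structural input is $\dive\om=\dive\curl u=0$. Working in the collar $\Om_\delta$, where $\|n\|=1$, I would use $\om=(\om\cdot n)n+n\times(\om\times n)$ and expand $0=\dive\om$ with the product rule and $\dive(a\times b)=b\cdot\curl a-a\cdot\curl b$:
\begin{equation*}
0=\partial_n(\om\cdot n)+(\om\cdot n)\dive n+(\om\times n)\cdot\curl n-n\cdot\curl(\om\times n)\qquad\text{on }\Om_\delta .
\end{equation*}
Then write $\partial_n(\om\cdot n)=n\cdot\partial_n\om+\om\cdot\partial_n n$ and invoke the classical identity $(n\cdot\nabla)n+n\times\curl n=\frac12\nabla\|n\|^2=0$ on $\Om_\delta$: the two curvature terms $\om\cdot\partial_n n$ and $(\om\times n)\cdot\curl n=\om\cdot(n\times\curl n)$ cancel, leaving $n\cdot\partial_n\om=n\cdot\curl(\om\times n)-(\om\cdot n)\dive n$ on $\Om_\delta$. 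Finally I would use $n\cdot\curl w=(n\times\nabla)\cdot w$ for any $w$ — an operator that only differentiates along $\bdry$, hence sees $w$ only through its trace — together with $\om\cdot n=n\cdot\curl u=(n\times\nabla)\cdot u$; substituting $\om\times n=F(u)$ from \eqref{ident0} then produces $n\cdot\partial_n\om=(n\times\nabla)\cdot F(u)-(n\times\nabla)\cdot u\,\dive n=G(u,(n\times\nabla)u)$ on $\bdry$.

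No step is genuinely difficult; the care is in three places: (i) it is essential that $u\cdot n\equiv0$ on all of $\bdry$ (so that its surface gradient vanishes), not merely pointwise; (ii) the cancellation of curvature terms in the second identity uses only $\|n\|=1$ near $\bdry$, so it is insensitive to the particular smooth extension of the unit normal and in particular does not require $n=\nabla d$; and (iii) one must check that $(n\times\nabla)\cdot(\om\times n)$ depends on $\om\times n$ only through its boundary trace, in order to legitimately replace $\om\times n$ by $F(u)$ inside that operator.
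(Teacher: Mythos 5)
Your proof is correct, but it is organized differently from the paper's. For the first identity the paper gives no argument at all — it simply cites \cite[Eqn.~(14)]{busuioc_second_2003} — whereas you supply a self-contained derivation from the pointwise identity $2D(u)n=\om\times n+2\nabla(u\cdot n)-2\sum_ju_j\nabla n_j$ and the fact that the surface gradient of $u\cdot n$ vanishes on $\bdry$; this is a genuine addition, and your three cautionary remarks (in particular that $\|n\|=1$ near $\bdry$ is needed to identify $[\nabla n_j]_{tan}$ with $-n\times(n\times\nabla)n_j$) are exactly the points where such a derivation could go wrong. For the second identity your route is the same in substance as the paper's — both exploit $\dive\om=0$ to trade the normal derivative $n\cdot\partial_n\om$ for the tangential operators $n_j\partial_i-n_i\partial_j$ — but the paper executes this as an index computation ending in $(n\times\nabla)\cdot(\om\times n)-\om\cdot n\dive n+\tfrac12\om\cdot\nabla(\|n\|^2)$, while you package it coordinate-free via $\dive(a\times b)=b\cdot\curl a-a\cdot\curl b$ applied to $\om=(\om\cdot n)n+n\times(\om\times n)$; your cancellation of the curvature terms $\om\cdot\partial_nn$ and $(\om\times n)\cdot\curl n$ using $\tfrac12\nabla\|n\|^2=0$ is precisely the paper's last term $\tfrac12\om\cdot\nabla(\|n\|^2)$ in disguise. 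Your final observation, that $(n\times\nabla)\cdot{}$ sees $\om\times n$ only through its boundary trace so that $F(u)$ may be substituted, is the same justification the paper uses implicitly. No gaps.
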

\begin{proof}
Relation \eqref{ident0} was proved in  \cite[Eqn. (14)] {busuioc_second_2003}. Next, we use that $\om$ is divergence free and write
\begin{align*}
(\partial_n\om)\cdot n&=\sum_{i,j}n_in_j\partial_i \om_j \\
&=\sum_{i,j}n_i(n_j\partial_i -n_i\partial_j)\om_j  \\
&=\sum_{i,j}(n_j\partial_i -n_i\partial_j)(n_i\om_j) -\sum_{i,j}\om_j(n_j\partial_i -n_i\partial_j)n_i\\
&=\frac12\sum_{i,j}(n_j\partial_i -n_i\partial_j)(n_i\om_j-n_j\om_i)
-\om\cdot n\dive n+\sum_{i,j}\om_jn_i\partial_jn_i\\
&=(n\times\nabla)\cdot(\om\times n)-\om\cdot n\dive n+\frac12\om\cdot\nabla(\|n\|^2)
\end{align*}
Using \eqref{ident0} and the identity $\om\cdot n=(n\times \nabla)\cdot u$ and recalling that $\|n\|^2=1$ in the neighborhood of the boundary completes the proof of the lemma.
\end{proof}

\section{Conormal Sobolev spaces}
\label{conormal}

The conormal Sobolev spaces are defined by using a family of  generator tangent vector fields. Here, in order to simplify the presentation we will use a particular family of generator tangent vector fields. We define it in the following way. Let $U_0=\{x\in\overline\Om\ ;\ d(x,\partial\Om)<\delta\}$ and $U_1=\{x\in\overline\Om\ ;\ d(x,\partial\Om)>\delta/2\}$ and $\varphi_0,\varphi_1\in C^\infty_0(\overline\Om)$ be a partition of unity subordinated to the open cover of $\overline\Om$ given by $\overline\Om=U_0\cup U_1$. We have that $\varphi_0$ is compactly supported in $U_0$ and is equal to 1 in $\Om_{\delta/2}$. The function $\varphi_1$ is compactly supported in $U_1$ and is equal to 1 in $\Om_\delta^c$. Since $\|n\|=1$ on $U_0$, the set
\begin{align*}
\cz&=\Bigl\{
\varphi_0 \begin{pmatrix}0\\-n_3\\n_2\end{pmatrix},
\varphi_0 \begin{pmatrix}n_3\\0\\-n_1\end{pmatrix},
\varphi_0 \begin{pmatrix}-n_2\\n_1\\0\end{pmatrix},
\varphi_0 nd(x,\partial\Omega),
\varphi_1 \begin{pmatrix}1\\0\\0\end{pmatrix},
\varphi_1 \begin{pmatrix}0\\1\\0\end{pmatrix},
\varphi_1 \begin{pmatrix}0\\0\\1\end{pmatrix}
\Bigr\}\\
&\equiv\{Z_1,\dots,Z_7\}
\end{align*}
is clearly a  family of generator tangent vector fields.

If $\be\in \N^7$ is a multi-index, we introduce the notation $\Z^\be=\ZZ1^{\be_1}\dots\ZZ 7^{\be_7}$.  For $m\in\N$, we introduce the so-called conormal Sobolev space $H^m_{co}$ which consists of all square-integrable functions $f$ such that $\Z^\be f\in L^2(\Om)$ for all $|\be|\leq m$. The norm on $H^m_{co}$ is given by
\begin{equation*}
\|f\|_{H^m_{co}}^2=\sum_{|\be|\leq m}\nl2{\Z^\be f}^2.
\end{equation*}
We define in a similar manner $W^{m,\infty}_{co}$ by using the $L^\infty$ norm instead of the $L^2$ norm. Finally, let $X^m$ be defined by
\begin{equation*}
X^m=\{f\ ;\  f\in H^m_{co} \text{ and } \nabla f\in H^{m-1}_{co}\}.
\end{equation*}
with norm
\begin{equation*}
  \|f\|_{X^m}=\|f\|_{H^m_{co}}+\|\nabla f\|_{H^{m-1}_{co}}.
\end{equation*}

It can be checked that the following identity holds true
\begin{equation*}
n\times(n\times u)=
(n_3u_2-n_2u_3) \begin{pmatrix}0\\-n_3\\n_2\end{pmatrix}
+(n_1u_3-n_3u_1)\begin{pmatrix}n_3\\0\\-n_1\end{pmatrix}
+(n_2u_1-n_1u_2) \begin{pmatrix}-n_2\\n_1\\0\end{pmatrix}
\end{equation*}
for any vector field $u$. So, in view of our definition of $\cz$, we have that
\begin{equation*}
  \varphi_0 n\times(n\times u)=
(n_3u_2-n_2u_3) Z_1
+(n_1u_3-n_3u_1)Z_2
+(n_2u_1-n_1u_2) Z_3.
\end{equation*}
Next, because of the identity $\|n\|^2u=-n\times(n\times u)+n(n\cdot u)$ and since on the support of $\varphi_0$ we have that $\|n\|=1$, we can decompose
\begin{align*}
\varphi_0 u
&=-\varphi_0 n\times(n\times u)+\varphi_0 n(n\cdot u)\\
&= (n_2u_3-n_3u_2) Z_1 +(n_3u_1-n_1u_3)Z_2+(n_1u_2-n_2u_1) Z_3+\frac{u\cdot n}{d}Z_4.
\end{align*}

We also trivially have that $\varphi_1u=u_1Z_5+u_2Z_6+u_3Z_7$ and since $\varphi_0+\varphi_1=1$ we finally deduce that the following decomposition holds true for any vector field $u$:
\begin{equation}\label{Z}
\begin{aligned}
u&=\varphi_0 u+\varphi_1 u\\
&= (n_2u_3-n_3u_2) Z_1 +(n_3u_1-n_1u_3)Z_2+(n_1u_2-n_2u_1) Z_3  +\frac{u\cdot n}{d}Z_4 +u_1Z_5+u_2Z_6+u_3Z_7\\
&\equiv\sum_{i=1}^7\tiu_iZ_i.
\end{aligned}
\end{equation}

A very important property of this ``canonical decomposition'' associated to the set $\cz$ of generator vector fields is stated in the following lemma.
\begin{lemma}\label{lemadecomp}
Let $u$ be a divergence free vector field tangent to the boundary. For any $m\in\N$ there exists a constant $C=C(m,\Om)$ such that $\hco m{\tiu_i}\leq C\hco {m+1}u$ and $\uco m{\tiu_i}\leq C\uco {m+1}u$ for every $i\in\{1,\dots,7\}$.
\end{lemma}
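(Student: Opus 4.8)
The plan is to estimate, one at a time, the seven scalar coefficients $\tiu_i$ of the canonical decomposition \eqref{Z}. For $i\in\{1,2,3,5,6,7\}$ this is immediate: by \eqref{Z} each such $\tiu_i$ equals a fixed function in $C^\infty(\overline\Om)$ (a component of $n$, or a constant, possibly cut off by $\varphi_0$ or $\varphi_1$) times one Cartesian component of $u$. Applying the generator fields and the Leibniz rule, $\Z^\be\tiu_i$ is then a finite sum of terms $g\,\Z^\gamma u_k$ with $g\in C^\infty(\overline\Om)$ having all derivatives bounded and $|\gamma|\leq|\be|$; hence $\hco m{\tiu_i}\leq C\hco m u\leq C\hco{m+1}u$ and, in the same way, $\uco m{\tiu_i}\leq C\uco m u\leq C\uco{m+1}u$. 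Everything therefore reduces to the single coefficient $\tiu_4=\dfrac{u\cdot n}{d}$, the obstruction being the division by $d$, which vanishes on $\bdry$.

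For $\tiu_4$ I would localise with a cut-off $\chi\in C^\infty_0(\overline\Om)$ that is $\equiv1$ on $\Om_{\delta/4}$ and supported in $\Om_{\delta/2}$. Since $1-\chi$ is supported in $\{d\geq\delta/4\}$, the function $(1-\chi)\tiu_4=\frac{1-\chi}{d}(u\cdot n)$ is a bounded-smooth multiple of $u$ and is estimated as in the first paragraph; likewise every Leibniz term in $\Z^\be(\chi\tiu_4)$ carrying at least one derivative of $\chi$ is supported in $\{d\geq\delta/4\}$ and is controlled the same way, and, $Z_5,Z_6,Z_7$ being supported away from the support of $\chi$, only multi-indices built from $Z_1,\dots,Z_4$ contribute to $\Z^\be(\chi\tiu_4)$. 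So it remains to bound $\Z^\be(\chi\tiu_4)$, $|\be|\leq m$, which near $\bdry$ amounts to controlling $v/d$, $v:=u\cdot n$, on $\Om_{\delta/2}$ — a region where $d$ is the distance to $\bdry$, $\|n\|=1$ and $\varphi_0=1$. Two facts are used here. First, $\dive u=0$ together with the splitting $\nabla=n\,\partial_n+\nabla_\mathrm{tan}$, $\nabla_\mathrm{tan}:=\nabla-n\,\partial_n$, gives
\begin{equation*}
\partial_n v=(\partial_n n)\cdot u-\sum_j(\nabla_\mathrm{tan})_j u_j\qquad\text{on }\Om_\delta ,
\end{equation*}
and since each first-order operator $(\nabla_\mathrm{tan})_j=\partial_j-n_j\partial_n$ kills $d$ and is therefore a smooth combination of $Z_1,\dots,Z_7$ on $\Om_{\delta/2}$, the quantity $\partial_n v$ is a smooth-coefficient combination of conormal derivatives of $u$ of order $\leq 1$; taking $\nu=-n$ near $\bdry$ this yields $\|\Z^{\be'}\partial_\nu v\|_{L^2}\leq C\hco{|\be'|+1}u$ and $\|\Z^{\be'}\partial_\nu v\|_{L^\infty}\leq C\uco{|\be'|+1}u$. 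Second, $u$ being tangent to $\bdry$ means $v=0$ on $\bdry$, so writing $\Phi_t\colon\Om_\delta\to\Om_\delta$ for the map carrying $x$ to the point at distance $t\,d(x)$ from $\bdry$ along the normal through $x$ (so $d\circ\Phi_t=t\,d$, $\Phi_1=\mathrm{id}$, and $\Phi_0$ projects onto $\bdry$), the fundamental theorem of calculus gives
\begin{equation*}
\frac{v(x)}{d(x)}=\int_0^1(\partial_\nu v)\bigl(\Phi_t(x)\bigr)\,dt,\qquad x\in\Om_\delta .
\end{equation*}

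The crux is differentiating this representation in conormal directions. Since $\Phi_t$ carries each level set $\{d=s\}$ onto $\{d=ts\}$, its differential maps tangential fields to tangential fields; and a short computation — using near $\bdry$ that $Z_4=\varphi_0\,d\,\partial_n$ is the normal generator and that the factor $d$ in $Z_4$ exactly cancels the rescaling of $\Phi_t$ — shows that for $Z\in\{Z_1,\dots,Z_4\}$ one has $Z(f\circ\Phi_t)=\sum_i\rho_i^t\,(Z_if)\circ\Phi_t$ with $\rho_i^t$ bounded uniformly in $t\in[0,1]$ and $x\in\Om_{\delta/2}$. In particular no power of $t$ is lost and no undifferentiated $v/d$ is regenerated, so iterating the displayed representation gives, for $|\be|\leq m$,
\begin{equation*}
\Z^\be\Bigl(\frac vd\Bigr)(x)=\sum_{|\be'|\leq|\be|}\int_0^1 R_{\be'}^t(x)\,(\Z^{\be'}\partial_\nu v)\bigl(\Phi_t(x)\bigr)\,dt\qquad\text{on }\Om_{\delta/2},
\end{equation*}
with each $R_{\be'}^t$ bounded uniformly in $t$ and $x$. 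Finally, writing $x\mapsto\Phi_t(x)$ in normal coordinates $(y,s)$ and substituting $s\mapsto ts$ shows this map has Jacobian bounded above and below uniformly in $t$, whence $\|f\circ\Phi_t\|_{L^2(\Om_{\delta/2})}\leq C\,t^{-1/2}\|f\|_{L^2(\Om_{\delta/2})}$ and $\|f\circ\Phi_t\|_{L^\infty}\leq\|f\|_{L^\infty}$; since $\int_0^1 t^{-1/2}\,dt<\infty$, Minkowski's integral inequality together with the first fact gives $\|\Z^\be(v/d)\|_{L^2(\Om_{\delta/2})}\leq C\hco{m+1}u$ and, identically, the $W^{m,\infty}_{co}$ bound $\leq C\uco{m+1}u$. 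Combining with the trivial contributions, $\hco m{\tiu_i}\leq C\hco{m+1}u$ and $\uco m{\tiu_i}\leq C\uco{m+1}u$ for every $i$, as claimed. I expect the only real difficulty to be this last step — the Hardy-type inequality in conormal norms: one must verify that the degenerate generator $Z_4\sim d\,\partial_n$ does not produce a non-integrable power of $t$ under the contractions $\Phi_t$, and that the normal derivative $\partial_n(u\cdot n)$, which a priori is controlled by no conormal norm of $u$, becomes controllable precisely because $\dive u=0$.
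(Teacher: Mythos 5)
Your proposal is correct and follows essentially the same route as the paper: the paper also disposes of $\tiu_i$, $i\neq4$, as smooth multiples of components of $u$, and handles $\tiu_4=(u\cdot n)/d$ by combining a Hardy-type inequality in conormal norms (Lemma \ref{clar}, proved there via the same Taylor/FTC representation $f/d=\int_0^1\partial_n f\circ\Phi_t\,dt$ after flattening the boundary, with the same $\int_0^1 t^{-1/2}\,dt$ change-of-variables argument for the $L^2$ bound) with the identity \eqref{interm} expressing $\partial_n(u\cdot n)$ through tangential derivatives thanks to $\dive u=0$. Your only deviation is cosmetic: you work with the normal contraction $\Phi_t$ intrinsically on $\Om_\delta$ rather than in flattened local coordinates.
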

\begin{proof}
From the explicit formulas for the $\tiu_i$ the assertion is obvious except for $\tiu_4$. Because $u$ is tangent to the boundary, we can apply Lemma \ref{clar} below to $u\cdot n$ to deduce that
\begin{equation*}
\hco m{\tiu_4}=\bighco m{\frac{u\cdot n}{d}}
\leq C(\hco m{u\cdot n}+\hco m{\partial_n (u\cdot n)}).
\end{equation*}
We have that
\begin{equation}\label{interm}
\begin{aligned}
 \partial_n (u\cdot n)
&=\sum_{i,j}n_i\partial_i(n_j u_j)\\
&=\sum_{i,j}n_i\partial_in_j u_j+\sum_{i,j}n_in_j\partial_iu_j\\
&=\partial_n  n\cdot u+\sum_{i,j}n_i(n_j\partial_i-n_i\partial_j)u_j+\|n\|^2\dive u\\
&=\partial_n  n\cdot u+\sum_{i,j}n_i(n_j\partial_i-n_i\partial_j)u_j.
\end{aligned}
\end{equation}
Because $n_j\partial_i-n_i\partial_j$ are tangential derivatives, we immediately deduce that
\begin{equation*}
\hco m{\partial_n (u\cdot n)}\leq C\hco{m+1}u
\end{equation*}
so
\begin{equation*}
\hco m{\tiu_4}\leq C\hco{m+1}u.
\end{equation*}
A similar argument works for the $W^{m,\infty}_{co}$ spaces so the proof is completed.
\end{proof}

We show now the following easy lemma who was used in the proof of the previous lemma.
\begin{lemma}\label{clar}
Let $f$ be a function vanishing on the boundary of $\Omega$.  For each $m\in\N$ there exists a constant $C=C(m,\Om)$ such that
\begin{equation*}
\bighco m{\frac{f}{d}}\leq C(\hco mf+C\hco m{\partial_n  f})
\end{equation*}
and
\begin{equation*}
\biguco m{\frac{f}{d}}\leq C(\uco mf+C\uco m{\partial_n  f}).
\end{equation*}
\end{lemma}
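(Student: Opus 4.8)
The plan is to reduce the $H^m_{co}$ estimate to the case $m=0$ by a commutator argument, and then to prove the $m=0$ case by a Hardy-type inequality adapted to the conormal setting. The key structural fact is that the generator vector fields in $\cz$ are tangential, so they almost commute with division by $d$: near the boundary $d$ agrees with the distance function, hence a tangential field $Z$ satisfies $Z(1/d) = -(Zd)/d^2$ where $Zd$ vanishes on $\bdry$ (since $Zd$ is tangential applied to a function that is constant, namely zero, along the boundary — more precisely $Zd$ itself vanishes on $\bdry$ because $d$ is constant there), so $Zd/d$ is bounded. Away from the boundary $d$ is bounded below, so everything is harmless there via the $\varphi_1$ piece of the partition of unity.

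First I would handle $m=0$. Writing $g = f/d$, I want $\nl2{g}\leq C(\nl2 f + \nl2{\partial_n f})$. Localize with the partition of unity: on $U_1$, $d$ is bounded below by $\delta/2$, so $\|\varphi_1 g\|_{L^2}\leq C\|f\|_{L^2}$ trivially. On $U_0$, flatten the boundary locally and use coordinates $(y',y_n)$ with $y_n$ the normal variable comparable to $d$; then $\partial_n$ is comparable to $\partial_{y_n}$ up to lower-order terms, and the claim becomes the one-dimensional Hardy inequality $\int_0^{a}\frac{|f(y_n)|^2}{y_n^2}\,dy_n\leq 4\int_0^{a}|f'(y_n)|^2\,dy_n$ valid since $f(0)=0$, integrated over the tangential variables. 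This gives the $m=0$ bound (the $L^\infty$ version is even simpler: $|f(x)|/d(x)\leq \sup|\partial_n f|$ along the normal segment, by the fundamental theorem of calculus).

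Next, for general $m$, I would induct on $m$ using the conormal derivatives $\Z^\be$. For $|\be|\leq m$ write $\Z^\be(f/d)$ and commute $\Z^\be$ past $1/d$: by the Leibniz rule and the observation above that $Z_i(1/d)$ equals $1/d$ times a smooth bounded function (on $U_0$, because $Z_i d/d$ is bounded; on $U_1$ because $1/d$ is itself smooth and bounded), one gets
\begin{equation*}
\Z^\be(f/d) = \sum_{\gamma\leq\be} c_{\be\gamma}\,\frac{\Z^\gamma f}{d}\cdot h_{\be\gamma},
\end{equation*}
with $h_{\be\gamma}$ smooth and bounded together with all its conormal derivatives. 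Each $\Z^\gamma f$ with $\gamma\leq\be$ still vanishes on $\bdry$ (tangential derivatives of a function vanishing on the boundary vanish on the boundary), so applying the $m=0$ result to each $\Z^\gamma f$ yields
\begin{equation*}
\Bignl2{\frac{\Z^\gamma f}{d}}\leq C\bigl(\nl2{\Z^\gamma f}+\nl2{\partial_n\Z^\gamma f}\bigr)\leq C\bigl(\hco m f+\hco m{\partial_n f}\bigr),
\end{equation*}
where in the last step I use that $\partial_n\Z^\gamma f$ is controlled by $\hco m{\partial_n f}$ after commuting $\partial_n$ past the tangential fields (the commutators $[\partial_n,Z_i]$ are first-order operators with smooth coefficients, so they contribute terms already present in $\hco mf$, which is itself dominated by $\hco m f+\hco m{\partial_n f}$). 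Summing over $|\be|\leq m$ gives the desired estimate; the $W^{m,\infty}_{co}$ version follows by the same argument with $L^2$ replaced by $L^\infty$.

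The main obstacle is bookkeeping rather than anything deep: one must verify carefully that $Z_i(1/d)$ has the claimed form $1/d$ times a bounded-with-all-conormal-derivatives function — the delicate point being the behavior of $Z_i d / d$ across the interface $d=\delta/2$ where the partition of unity $\varphi_0$ is cut off, and checking that $Z_4 = \varphi_0 n d$ applied to $1/d$ produces a genuinely smooth term $-\varphi_0 n\cdot n = -\varphi_0$ (bounded, no singularity) rather than a singular one. Once this algebraic structure is pinned down, the inductive commutator expansion and the reduction to the one-dimensional Hardy inequality are routine.
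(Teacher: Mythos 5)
Your argument is correct in substance but follows a genuinely different route from the paper's. After the same reduction to a flattened boundary, the paper writes $f/x_3=\int_0^1\partial_3 f(x_1,x_2,tx_3)\,dt$ and exploits the exact commutation $\partial_1^{\be_1}\partial_2^{\be_2}(x_3\partial_3)^{\be_3}\bigl(f/x_3\bigr)=\int_0^1\bigl(\partial_1^{\be_1}\partial_2^{\be_2}(y_3\partial_{y_3})^{\be_3}\partial_3 f\bigr)(x_1,x_2,tx_3)\,dt$, so that all orders $m$ are handled at once by Minkowski's inequality and the change of variables $y_3=tx_3$ (yielding the integrable factor $t^{-1/2}$); no commutators and no induction are needed. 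You instead prove $m=0$ by Hardy's inequality and then induct via a Leibniz expansion $\Z^\be(f/d)=\sum_\gamma c_{\be\gamma}\,(\Z^\gamma f/d)\,h_{\be\gamma}$, which is workable because $Z_id/d$ is in fact smooth (Hadamard's lemma: $Z_id$ vanishes on $\bdry$ and $d$ is a defining function), but it forces you to carry out exactly the bookkeeping you flag, plus one point you state too loosely: $[\partial_n,Z_i]$ is \emph{not} a combination of tangential fields alone (e.g.\ $[\partial_3,x_3\partial_3]=\partial_3$), so $[\partial_n,\Z^\gamma]f$ is not controlled by $\hco mf$ directly; you must decompose $[\partial_n,Z_i]=c_i\partial_n+\sum_j a_{ij}Z_j$ and run a secondary induction to bound $\nl2{\partial_n\Z^\gamma f}$ by $\hco{m}{f}+\hco{m}{\partial_n f}$. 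Also, your $m=0$ step needs the boundary-flattening to send $\partial_n$ exactly to $\partial_{y_n}$ (Fermi/collar coordinates), since a tangential error term applied to $f$ costs one conormal derivative that is not available at order $0$; the paper's reduction makes the same implicit choice. In short, your proof buys a more standard, modular argument (Hardy plus commutators) at the price of nontrivial bookkeeping, while the paper's integral representation gives the full-strength estimate in two lines with an explicit constant.
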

\begin{proof}
The inequalities are obvious in a compact subset of  $\Omega$ because in such a region  $d$ has a strictly positive uniform lower bound. We only need to prove something in the neighborhood of the boundary. Using local changes of coordinates combined with a partition of unity of the neighborhood of the boundary and recalling that the conormal spaces are invariant by changes of variables, we see that it suffices to prove the stated inequalities in the following setting:
\begin{itemize}
\item $\Om$ is the upper-half of the unit ball $B_+=\{x\in\R^3\ ; \ \|x\|< 1\text{ and }x_3>0\}$.
\item $f$ vanishes on the flat part of $B_+$: $f(x_1,x_2,0)=0$.
\item the conormal spaces are constructed using the vector fields $\partial_1$, $\partial_2$ and $x_3\partial_3$.
\end{itemize}

So we need to prove that
\begin{equation*}
\hco m{f/x_3}\leq C(\hco mf+C\hco m{\partial_3f})
\quad\text{and}\quad
\uco m{f/x_3}\leq C(\uco mf+C\uco m{\partial_3f}).
\end{equation*}
These bounds are easy to prove since we can write by the Taylor formula
\begin{equation*}
\frac{f}{x_3}= \int_0^1\partial_3f(x_1,x_2,tx_3)\,dt
\end{equation*}
so
\begin{equation*}
\partial_1^{\be_1}\partial_2^{\be_2}(x_3\partial_3)^{\be_3}(f/x_3)
= \int_0^1\bigl(\partial_1^{\be_1}\partial_2^{\be_2}(tx_3\partial_3)^{\be_3}\partial_3f\bigr)(x_1,x_2,tx_3)\,dt
\end{equation*}
Taking the $L^\infty$ norm yields
\begin{equation*}
  \nl\infty{\partial_1^{\be_1}\partial_2^{\be_2}(x_3\partial_3)^{\be_3}(f/x_3)}\leq \nl\infty{\partial_1^{\be_1}\partial_2^{\be_2}(x_3\partial_3)^{\be_3}\partial_3 f}
\end{equation*}
while taking the $L^2$ norm gives
\begin{align*}
\nl2{\partial_1^{\be_1}\partial_2^{\be_2}(x_3\partial_3)^{\be_3}(f/x_3)}
&\leq \int_0^1\|\bigl(\partial_1^{\be_1}\partial_2^{\be_2}(tx_3\partial_3)^{\be_3}\partial_3f\bigr)(x_1,x_2,tx_3)\|_{L^2(dx)}\,dt\\
&= \int_0^1\|\bigl(\partial_1^{\be_1}\partial_2^{\be_2}(y_3\partial_{y_3})^{\be_3}\partial_{y_3}f\bigr)(x_1,x_2,y_3)\|_{L^2(dx_1dx_2dy_3)}\frac1{\sqrt t}\,dt.
\end{align*}
The last $L^2$ norm is not on the full domain $B_+$ (like the other $L^2$ norms). Because of the change of variables $y_3=tx_3$, the domain of integration of the last $L^2$ norm is the subset of $B_+$ formed by the triples $(x_1,x_2,tx_3)$ where $x\in B_+$. Since the $L^2$ norm is taken on a subset of $B_+$, we can bound it by the norm on the full $B_+$ obtaining in the end
\begin{equation*}
 \nl2{\partial_1^{\be_1}\partial_2^{\be_2}(x_3\partial_3)^{\be_3}(f/x_3)}\leq \nl2{\partial_1^{\be_1}\partial_2^{\be_2}(x_3\partial_3)^{\be_3}\partial_3 f}\int_0^1\frac1{\sqrt t}\,dt
= 2\nl2{\partial_1^{\be_1}\partial_2^{\be_2}(x_3\partial_3)^{\be_3}\partial_3 f}.
\end{equation*}
This completes the proof of the lemma.
\end{proof}

The next result  shows that the gradient of a divergence free vector field is controlled by the vorticity and by tangential derivatives only.
\begin{lemma}\label{lemalip}
Let $k\in\N$ and $u$ be a divergence free vector field. There exists a constant $C=C(k,\Om)>0$ such that
\begin{equation*}
\uco k{\nabla u}\leq C(\uco k\om+\uco{k+1}u)
\end{equation*}
where $\om=\curl u$.
\end{lemma}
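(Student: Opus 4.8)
The plan is to reduce everything to a statement about the \emph{normal} derivative $\partial_n u$ near the boundary, since all other first derivatives of $u$ are tangential up to bounded-coefficient combinations and hence already controlled in conormal norm. Away from $\bdry$ the conormal vector fields $Z_1,\dots,Z_7$ span $\R^3$ with bounded coefficients, so there $W^{k,\infty}_{co}$ is equivalent to the usual $W^{k,\infty}$ and $\uco k{\nabla u}\leq C\uco{k+1}u$ is trivial; using a smooth partition of unity it therefore suffices to prove the estimate on a neighborhood of $\bdry$ contained in $\Om_{\delta/2}$, where $\|n\|=1$ and $\varphi_0\equiv1$. On such a neighborhood, for each $j$ the vector field $e_j-n_jn$ is orthogonal to $n$ and hence a smooth combination of $n\times e_1,n\times e_2,n\times e_3$, i.e. of $Z_1,Z_2,Z_3$; thus $\partial_j=(\text{smooth combination of }Z_1,Z_2,Z_3)+n_j\partial_n$, so $\partial_j u_i$ decomposes into a term bounded by $C\uco{k+1}u$ plus $n_j(\partial_n u)_i$. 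The problem is thus reduced to bounding $\uco k{\partial_n u}$.

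First I would record the elementary vector identity $\nabla(u\cdot n)=(u\cdot\nabla)n+(n\cdot\nabla)u+u\times\curl n+n\times\curl u$, which gives
\[
\partial_n u=\nabla(u\cdot n)-(u\cdot\nabla)n-u\times\curl n-n\times\om .
\]
Here $(u\cdot\nabla)n$ and $u\times\curl n$ are smooth-coefficient zeroth-order expressions in $u$, hence bounded by $C\uco ku$, and $n\times\om$ is bounded by $C\uco k\om$, using that multiplication by a fixed smooth function (and more generally the product of two factors) is continuous on $W^{k,\infty}_{co}$. For the remaining term I split the gradient once more, $\nabla(u\cdot n)=\big(\nabla(u\cdot n)-(\partial_n(u\cdot n))n\big)+(\partial_n(u\cdot n))n$: the first summand is a tangential gradient, i.e. a first-order tangential operator applied to $u\cdot n$, hence bounded by $C\uco{k+1}{u\cdot n}\leq C\uco{k+1}u$. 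For the normal part I invoke the already-established identity \eqref{interm}: since $\dive u=0$ it reads $\partial_n(u\cdot n)=\partial_n n\cdot u+\sum_{i,j}n_i(n_j\partial_i-n_i\partial_j)u_j$, and because the operators $n_j\partial_i-n_i\partial_j$ are tangential this too is bounded by $C\uco{k+1}u$.

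It then remains to differentiate: applying a conormal monomial $\Z^\be$ with $|\be|\leq k$ to each of the identities above and expanding by the Leibniz rule, one uses only that conormal derivatives of fixed smooth functions stay bounded and that applying $\Z^\be$ to a first-order tangential operator acting on $u$ produces a conormal derivative of $u$ of order at most $k+1$. Collecting the contributions gives $\uco k{\partial_n u}\leq C(\uco k\om+\uco{k+1}u)$, and then $\uco k{\nabla u}\leq C(\uco k\om+\uco{k+1}u)$, as claimed.

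The only genuinely nontrivial point, and hence the expected main obstacle, is that the non-conormal gradient $\nabla(u\cdot n)$ is nonetheless controlled in conormal norm; this succeeds precisely because the bad part $\partial_n(u\cdot n)$ can be rewritten, using the divergence-free condition via \eqref{interm}, purely in terms of tangential derivatives of $u$. Everything else — the localization near $\bdry$, the equivalence of norms away from it, and the Leibniz bookkeeping — is routine.
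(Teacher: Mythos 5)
Your proof is correct and follows essentially the same route as the paper's: both arguments localize near the boundary, split $\nabla$ into tangential derivatives plus $n\,\partial_n$, and then control the normal derivative by combining the divergence-free identity \eqref{interm} with an extraction of the curl. The only difference is algebraic bookkeeping: you isolate $\partial_n u$ in one stroke via the identity $\nabla(u\cdot n)=(u\cdot\nabla)n+\partial_n u+u\times\curl n+n\times\om$, whereas the paper decomposes $u$ into normal and tangential components and bounds $\partial_n(n\cdot u)$ and $\partial_n(n\times u)$ separately, the same terms appearing either way.
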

\begin{proof}
In the interior of $\Om$ the bound is obvious, so we only need to prove it in the neighborhood of the boundary. We will prove it in $\Om_\delta$ where $\|n\|=1$.

Because of the identities
\begin{equation*}
\nabla=-\frac n{\|n\|^2}\times(n\times\nabla)+\frac n{\|n\|^2}(\partial_n )
\end{equation*}
and
\begin{equation*}
u=-\frac n{\|n\|^2}\times(n\times u)+\frac n{\|n\|^2}(n\cdot u)
\end{equation*}
we observe that it suffices to bound $\uco k{\partial_n (n\cdot u)}$ and $\uco k{\partial_n (n\times u)}$. Thanks to \eqref{interm} we have that
\begin{equation*}
\uco k{\partial_n (n\cdot u)}  \leq C\uco {k+1} u.
\end{equation*}

To bound $\uco k{\partial_n (n\times u)}$, let us consider for example the first component:
\begin{align*}
\bigl[\partial_n (n\times u)\bigr]_1
&= \sum_i n_i\partial_i(n_2u_3-n_3u_2)\\
&= \sum_i n_i(\partial_in_2u_3-\partial_in_3u_2)+ \sum_i n_i(n_2\partial_iu_3-n_3\partial_iu_2)\\
&= (\partial_n n\times u)_1+ \sum_i n_i[(n_2\partial_i-n_i\partial_2)u_3-(n_3\partial_i-n_i\partial_3)u_2]+\|n\|^2\om_1.
\end{align*}
We infer that
\begin{equation*}
 \uco k{\partial_n (n\times u)}\leq  C(\uco k \om+\uco{k+1} u)
\end{equation*}
and this completes the proof.
\end{proof}

We end this section with the following technical results about the conormal Sobolev spaces:
\begin{lemma}\label{gagliardo}
\begin{enumerate}
\item For all $k\in\N$ and $|\be_1|+|\be_2|\leq k$ we have that
  \begin{gather}
\nl2{\partial_Z^{\be_1}f\partial_Z^{\be_2}g}\leq  C(\nl\infty f\|g\|_{H^k_{co}}+\hco kf\nl\infty g)\label{gn1}\\
\intertext{and}
\hco k{fg}\leq  C(\nl\infty f\|g\|_{H^k_{co}}+\hco kf\nl\infty g).\label{gn2}
  \end{gather}
\item \label{imbedding} The imbedding $X^2\subset L^\infty$ holds true.

\end{enumerate}
\end{lemma}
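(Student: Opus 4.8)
For part (1), the natural approach is the classical Gagliardo--Nirenberg interpolation strategy adapted to the conormal setting. The key point is that the vector fields $Z_1,\dots,Z_7$ generate a finite-dimensional Lie algebra (their commutators are again smooth combinations of the $Z_i$, since they are built from $n$, $d$ and a partition of unity), so the conormal derivatives behave, for the purposes of product estimates, exactly like ordinary derivatives on a manifold without boundary. First I would reduce \eqref{gn2} to \eqref{gn1} by expanding $\partial_Z^\be(fg)$ via the Leibniz rule: $\partial_Z^\be(fg)=\sum_{\be_1+\be_2=\be}c_{\be_1\be_2}\,\partial_Z^{\be_1}f\,\partial_Z^{\be_2}g$ (the constants $c_{\be_1\be_2}$ absorbing the lower-order commutator terms that arise because the $Z_i$ do not commute), and then summing \eqref{gn1} over all such splittings with $|\be_1|+|\be_2|\le k$. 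So the heart of the matter is \eqref{gn1}. For this, fix $\be_1,\be_2$ with $|\be_1|=j$, $|\be_2|=\ell$, $j+\ell\le k$; if $j=0$ or $\ell=0$ the bound is immediate, so assume $1\le j,\ell$. The standard Gagliardo--Nirenberg interpolation inequality on a bounded domain gives $\nl{2k/j}{\partial_Z^{\be_1}f}\le C\,\nl\infty f^{1-j/k}\,\hco kf^{\,j/k}$ and similarly $\nl{2k/\ell}{\partial_Z^{\be_2}g}\le C\,\nl\infty g^{1-\ell/k}\,\hco kg^{\,\ell/k}$; here one applies the Euclidean Gagliardo--Nirenberg inequality in each coordinate patch after straightening the boundary, using that the conormal norms are equivalent to flat Sobolev norms in tangential variables plus the weighted normal derivative, exactly as in Lemma~\ref{clar}. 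Hölder's inequality with exponents $\tfrac{2k}{j}$ and $\tfrac{2k}{\ell}$ (whose reciprocals sum to at most $1/2$, with the remaining room giving an $L^\infty$-type factor when $j+\ell<k$) then yields $\nl2{\partial_Z^{\be_1}f\,\partial_Z^{\be_2}g}\le C\,\nl\infty f^{1-j/k}\hco kf^{j/k}\,\nl\infty g^{1-\ell/k}\hco kg^{\ell/k}$, and Young's inequality converts this product of powers into the claimed sum $\nl\infty f\,\hco kg+\hco kf\,\nl\infty g$.

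For part (2), the embedding $X^2\subset L^\infty$, the strategy is again to localize. Away from the boundary, on the support of $\varphi_1$, all seven generators reduce (up to the cutoff) to the full set of Cartesian partials $\partial_1,\partial_2,\partial_3$, so on that region $H^2_{co}$ coincides with the ordinary $H^2$, and the classical Sobolev embedding $H^2(\R^3)\hookrightarrow L^\infty$ applies directly --- in fact one does not even need the $X^2$ part there. Near the boundary, after straightening as in Lemma~\ref{clar} we may take $\Om=B_+$ with conormal generators $\partial_1,\partial_2,x_3\partial_3$. For $f\in X^2$ we control $\partial_1^2 f,\partial_1\partial_2 f,\partial_2^2 f$ (tangential, in $L^2$ from the $H^2_{co}$ part), the first tangential derivatives, and $f$ itself, and moreover $\nabla f\in H^1_{co}$ gives us $\partial_3 f\in H^1_{co}$, hence $\partial_1\partial_3 f,\partial_2\partial_3 f,x_3\partial_3^2 f\in L^2$. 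The missing ingredient for full $H^2$ regularity is $\partial_3^2 f$ near $x_3=0$, but the standard trick is to avoid it: write $f(x',x_3)=\int$ along a path and estimate $\|f\|_{L^\infty}$ directly by iterating one-dimensional Sobolev embeddings --- $\|f\|_{L^\infty_{x_3}}\lesssim \|f\|_{L^2_{x_3}}+\|\partial_3 f\|_{L^2_{x_3}}$ fibrewise, then $\|\cdot\|_{L^\infty_{x'}}\lesssim \|\cdot\|_{H^2_{x'}}$, and the tangential $H^2$ norm of $f$ and of $\partial_3 f$ is controlled by $\hco2f+\hco1{\nabla f}\le\xm2f$. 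One checks the weight $x_3$ in $x_3\partial_3$ causes no loss because the troublesome second normal derivative never appears in this chain.

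\textbf{Main obstacle.} The genuinely delicate point is part (2): proving $X^2\subset L^\infty$ without access to $\partial_3^2 f$. One must be careful that the anisotropic scaling in the generator $x_3\partial_3$ (rather than $\partial_3$) does not degrade the embedding; the resolution is that in the Sobolev chain used for $L^\infty$ control one only ever differentiates in the normal direction once (passing from $f$ to $\partial_3 f$, which lies in $H^1_{co}$ by the $X^2$ hypothesis), so the weight is irrelevant and no endpoint issue arises. For part (1), by contrast, the only subtlety is bookkeeping the commutator corrections when reducing \eqref{gn2} to \eqref{gn1}, and this is routine once one notes $[Z_i,Z_j]\in\mathrm{span}_{C^\infty}\{Z_1,\dots,Z_7\}$; the interpolation and Hölder/Young steps are entirely standard.
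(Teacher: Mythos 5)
For part (1) your route is acceptable but is not what the paper does: the paper simply cites \cite[Lemma 8]{masmoudi_uniform_2012-1} for \eqref{gn1} and then deduces \eqref{gn2} from \eqref{gn1} by the Leibniz formula, exactly as in your reduction. What you sketch (conormal Gagliardo--Nirenberg interpolation, H\"older with exponents $2k/|\be_1|$ and $2k/|\be_2|$, then Young) is essentially the standard proof of that cited lemma, so it buys self-containedness at the price of having to justify the interpolation inequality for the fields $Z_i$; the case $|\be_1|+|\be_2|<k$ is most cleanly handled by applying the result with $k'=|\be_1|+|\be_2|$ and using monotonicity of the conormal norms, rather than by ``extra room'' in H\"older.

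Part (2) contains a genuine gap. In the half-space model your chain of integer Sobolev embeddings ends with the 2D step $\|\cdot\|_{L^\infty_{x'}}\leq C\|\cdot\|_{H^2_{x'}}$ applied to both $f$ and $\partial_3 f$ on slices, and your claim that ``the tangential $H^2$ norm of $f$ and of $\partial_3 f$ is controlled by $\hco 2f+\hco 1{\nabla f}$'' is false for $\partial_3 f$: the hypothesis $\nabla f\in H^1_{co}$ gives only \emph{one} tangential derivative of $\partial_3 f$ in $L^2$, not two, and the missing quantities (e.g.\ $\partial_1^2\partial_3 f$) are not part of the $X^2$ norm. You cannot weaken the 2D step either, since $H^1(\R^2)\not\subset L^\infty(\R^2)$ is a borderline failure; so the iteration of integer one-dimensional embeddings does not close with $X^2$ regularity (it would need $\nabla f\in H^2_{co}$). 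The paper's argument avoids this by going through fractional regularity via the trace theorem: from $f\in H^1(\Om)$ and $\nabla_h f\in H^1(\Om)$ (the latter requires precisely $\partial_3\nabla_h f\in L^2$, i.e.\ one tangential derivative of $\nabla f$, which \emph{is} controlled by $X^2$), the restrictions to each slice $\{x_3=\text{const}\}$ satisfy $f(\cdot,x_3),\ \nabla_h f(\cdot,x_3)\in H^{1/2}(\R^2)$ with uniform bounds, hence $f(\cdot,x_3)\in H^{3/2}(\R^2)\subset L^\infty(\R^2)$. The trace theorem converts the single available normal derivative into an extra half of a tangential derivative, which is exactly the fractional gain your integer-valued scheme cannot produce; you should replace the final step of your argument by this trace/interpolation step.
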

\begin{proof}
Relation \eqref{gn1}  was proved in \cite[Lemma 8]{masmoudi_uniform_2012-1}. Relation \eqref{gn2} follows from \eqref{gn1} and the Leibniz formula.

We prove now the embedding stated in item \ref{imbedding}). In the interior of $\Omega$ the $H^m_{co}$ regularity is the same as the $H^m$ regularity. Since in dimension three we have the embedding $H^2\subset L^\infty$ the desired embedding holds true in a compact region of $\Om$.  Therefore, we can assume that we are in the neighborhood of the boundary. Using a change of coordinates and a partition of unity, we can assume that the domain $\Om$ is the half-plane $\Omega=\{x\ ; x_3>0\}$. Let us denote $x_h=(x_1,x_2)$ and take some $f\in X^2$. We  have that $f$ and $\nabla_h f\in H^1(\Om)$. By the trace theorem, for all $x_3\geq0$ we have that $f(\cdot,x_3)$ and $\nabla_h f(\cdot,x_3)\in H^{\frac12}(\R^2)$ so $f(\cdot,x_3)\in H^{\frac32}(\R^2)$. The Sobolev embedding $ H^{\frac32}(\R^2)\subset L^\infty(\R^2)$ completes the proof of item \ref{imbedding}).
\end{proof}

\section{Some ellipticity results in conormal spaces}

We start with the following easy lemma relating velocity to vorticity in conormal spaces.
\begin{lemma}\label{lemmareg}
Let $u$ be a divergence free vector field tangent to the boundary.
There exists a constant $B_m=B(m,\Om)$ such that the following inequality holds true:
\begin{equation*}
\xm{m+1}u\leq B_m(\nl2u+\hcom m\om)
\end{equation*}
where $\om=\curl u$.
\end{lemma}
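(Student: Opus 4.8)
The plan is to prove the estimate
$$\xm{m+1}u \leq B_m\bigl(\nl2u + \hcom m\om\bigr)$$
by induction on $m$, reducing the control of $\nabla u$ in $H^{m-1+1}_{co} = H^m_{co}$ to the control of $\om$ and lower-order tangential derivatives via a div--curl elliptic system, and then iterating. Recall that by definition $\xm{m+1}u = \hcom{m+1}u + \hcom m{\nabla u}$, so the real content is to bound $\hcom m{\nabla u}$.

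First I would establish the base case, which amounts to a standard elliptic (div--curl) estimate: a divergence free vector field tangent to the boundary satisfies $\nh1u \leq C(\nl2u + \nl2\om)$. This is the usual $H^1$ regularity for the system $\dive u = 0$, $\curl u = \om$, $u\cdot n = 0$ on $\bdry$, on a smooth bounded domain (a Gaffney-type inequality / div-curl-boundary estimate), which also controls $\nh1u$ hence both $\hcom1 u$ and $\hcom0{\nabla u} = \nl2{\nabla u}$, giving $\xm1 u \leq B_0(\nl2u + \nl0\om)$.

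For the inductive step, assume the estimate holds at level $m-1$. The key maneuver is: for any tangential generator $Z_i\in\cz$, the field $Z_i u$ is \emph{not} divergence free in general, but its divergence and curl are expressible in terms of $u$, $\nabla u$ and $\om$ with coefficients that are smooth (the $Z_i$ have smooth coefficients). Concretely, $\dive(Z_i u) = [\,\dive,\,Z_i\,]u = (\text{smooth})\cdot\nabla u$ and $\curl(Z_i u) = Z_i\om + [\,\curl,\,Z_i\,]u = Z_i\om + (\text{smooth})\cdot\nabla u$; moreover $Z_i$ being tangent to $\bdry$ keeps $(Z_i u)\cdot n$ controlled on the boundary in terms of $u$ and $\nabla u$. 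Applying the elliptic estimate at one lower conormal level to $Z_i u$ (after using the decomposition \eqref{Z} to write $Z_iu$ against the generators where needed, together with Lemma~\ref{lemadecomp} to absorb the extra tangential derivative), one bounds $\hcom m{\nabla u}$ by $\hcom{m-1}{\nabla(Z_i u)}$-type terms, which close against $\hcom{m-1}\om$, $\hcom m u$ and the inductive hypothesis. The term $\hcom{m+1}u$ is handled purely by tangential differentiation since all $Z_i$ are tangential and $u\in L^2$, combined with the bound on $\nabla u$ just obtained to handle any mixed derivative. One then sums over $|\be|\leq m$ and uses the interpolation/product estimates of Lemma~\ref{gagliardo} to keep the constants under control.

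The main obstacle I expect is the boundary term: unlike the interior, where tangential differentiation commutes harmlessly, near $\bdry$ one must control the non-tangential derivative of $u$, and the natural way is via the elliptic system for $Z_i u$ — but $(Z_i u)\cdot n$ does not vanish on $\bdry$, so one cannot directly quote the homogeneous div-curl estimate. The fix is to either lift the boundary value (subtract a controlled extension so the corrected field is tangent) or, more in the spirit of this paper, to use Lemma~\ref{lemalip} together with Lemma~\ref{clar}: Lemma~\ref{lemalip} reduces $\hcom m{\nabla u}$ to $\hcom m\om$ plus $\hcom{m+1}u$, and the latter is tangential; the only genuinely non-tangential quantity, $\partial_n(u\cdot n)$, is handled via \eqref{interm} exactly as in the proof of Lemma~\ref{lemadecomp}. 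So in fact the cleanest route is: apply Lemma~\ref{lemalip} with $k=m$ to get $\hcom m{\nabla u}\leq C(\hcom m\om + \hcom{m+1}u)$, bound $\hcom{m+1}u$ inductively (peeling one tangential $Z_i$ and using \eqref{Z} with Lemma~\ref{lemadecomp} plus the div-curl base estimate for the lowest order $\nl2 u$ term), and collect. The care needed is purely bookkeeping of which derivative is spent where, ensuring no more than $m$ derivatives ever land on $\om$ and that the single non-tangential slot in $X^{m+1}$ is always absorbed by $\nabla u \in H^m_{co}$.
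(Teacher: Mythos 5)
Your first route is the paper's proof: for a tangential derivative $\zm$ of order $m$ one applies the inhomogeneous div--curl estimate of Foias--Temam to $\zm u$,
\begin{equation*}
\nl2{\nabla\zm u}\leq C\bigl(\nl2{\zm u}+\nl2{\curl\zm u}+\nl2{\dive\zm u}+\|n\cdot\zm u\|_{H^{1/2}(\partial\Omega)}\bigr),
\end{equation*}
notes that $\curl\zm u=\zm\om+[\curl,\zm]u$, $\dive\zm u=[\dive,\zm]u$ and $n\cdot\zm u=[n\cdot,\zm]u$ on $\partial\Omega$ (the last because $\zm$ is tangential and $u\cdot n=0$ there), bounds all three commutator terms by $\xm mu$ (the boundary one via the trace theorem applied to tangential derivatives of order $\leq m-1$), and obtains $\xm{m+1}u\leq C(\xm mu+\hco m\om)$, which is then iterated $m$ times down to the base case $\nh1u\leq C(\nl2u+\nl2\om)$. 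No lifting of the boundary value is needed: the elliptic estimate already carries the $H^{1/2}(\partial\Omega)$ trace term, and that term is of lower order precisely because $\zm$ is tangential.

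The alternative you end up recommending as ``the cleanest route,'' however, does not close. The $L^2$ analogue of Lemma \ref{lemalip} would give $\hco m{\nabla u}\leq C(\hco m\om+\hco{m+1}u)$, but the only way to bound $\hco{m+1}u$ by quantities at hand is to peel off one generator and write $\partial_Z^{\be}u=\partial_Z^{\be'}(Z_i\cdot\nabla u)$ for $|\be|=m+1$, which yields $\hco{m+1}u\leq \hco mu+C\hco m{\nabla u}$; substituting back gives $\hco m{\nabla u}\leq C\hco m\om+C\hco mu+C'\hco m{\nabla u}$ with $C'\geq1$, so nothing can be absorbed. The inductive hypothesis at level $m$ controls only $\hco{m-1}{\nabla u}$, not $\hco m{\nabla u}$, so it does not break this loop either. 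The pointwise decomposition of $\nabla$ near the boundary (the mechanism behind Lemma \ref{lemalip}) trades one normal derivative for one extra tangential derivative, which is exactly the currency you are trying to earn; the genuine gain from $\xm mu$ to $\xm{m+1}u$ has to come from the div--curl elliptic system at each order. Keep your first route and discard the final pivot.
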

\begin{proof}
Let $\zm$ be a tangential derivative of order $m$. We use \cite[Proposition 1.4]{foias_remarques_1978} to write
\begin{align*}
  \nl2{\nabla\zm u}
&\leq C(\nl2{\zm u}+\nl2{\curl\zm u}+\nl2{\dive\zm u}+\|n\cdot\zm u\|_{H^{1/2}(\partial\Omega)})\\
&\leq C(\hco  mu+\hco m\om+\nl2{[\curl,\zm] u}+\nl2{[\dive,\zm] u}+\|[n\cdot,\zm] u\|_{H^{1/2}(\partial\Omega)})
\end{align*}
where we used that $u$ is divergence free and tangent to the boundary. Clearly
\begin{equation*}
\nl2{[\curl,\zm] u}\leq C\xm m u
\end{equation*}
and
\begin{equation*}
\nl2{[\dive,\zm] u}\leq C\xm m u\cdot
\end{equation*}
We observe now that $[n\cdot,\zm] u$ is a combination of tangential derivatives of $u$ of order $\leq m-1$. But if  $\partial_Z^{m-1}$ is  a tangential derivative of order $\leq m-1$ then we have that
\begin{equation*}
\|\partial_Z^{m-1} u\|_{H^{1/2}(\partial\Omega)} \leq C \|\partial_Z^{m-1} u\|_{H^1(\Omega)} \leq C\xm mu\cdot
\end{equation*}
We infer from the above relations that the following estimate holds true:
\begin{equation*}
  \xm{m+1}u\leq C(\xm mu+\hco m\om).
\end{equation*}
Clearly one can now iterate the argument and bound  the term $\xm mu$ on the right-hand side. After $m$ iterations we obtain the desired conclusion.
\end{proof}

The main result of this section is  the following elliptic estimate:
\begin{proposition}\label{elliptic}
Let $m\in\N$. Suppose that $u$ is divergence free and verifies the Navier boundary conditions \eqref{navier}.
There exists $\al_0=\al_0(\Om,m)$ and a constant $C>0$ such that for all $0<\al<\al_0$ we have that
\begin{equation*}
\xm{m+1}u\leq C(\nl2u+\hco m{\oma}).
\end{equation*}
\end{proposition}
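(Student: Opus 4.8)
The plan is to reduce the estimate on $u$ to an estimate on $\oma = \om - \al\Delta\om$, using Lemma~\ref{lemmareg} twice and an elliptic bound that inverts the operator $\mathbb{I} - \al\Delta$ in conormal spaces. First I would apply Lemma~\ref{lemmareg} to $u$ itself: since $u$ is divergence free and tangent to the boundary, $\xm{m+1}u \leq B_m(\nl2 u + \hco m \om)$. So it suffices to control $\hco m\om$ by $\nl2 u + \hco m\oma$. Next I would want to apply Lemma~\ref{lemmareg} a second time, now to the vector field $\om = \curl u$: this is automatically divergence free, so provided we can check that $\om$ is tangent to the boundary (which it is \emph{not} in general — this is exactly where the Navier condition enters, see Lemma~\ref{ident}), we would get $\xm{m+1}\om \leq B_m(\nl2\om + \hco m{\curl\om})$, and $\curl\om = \curl\curl u = -\Delta u + \nabla\dive u = -\Delta u$. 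Combining $\om$ and $-\al\Delta u$-type terms one assembles $\oma$.

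The heart of the argument is therefore an elliptic regularity estimate for the resolvent problem $(\mathbb{I}-\al\Delta)\om = \oma$ in conormal norms, uniform in $\al$ for $\al$ small. I would set this up by testing the equation appropriately: at the $L^2$ level, $\nl2\om^2 + \al\nl2{\nabla\om}^2 \lesssim \nl2{\oma}\nl2\om$ plus boundary terms coming from integration by parts of $\al\Delta\om$ — and the boundary terms are precisely where Lemma~\ref{ident} is used to express $\om\times n$ and $n\cdot\partial_n\om$ on $\bdry$ in terms of $u$ and its tangential derivatives (the functions $F(u)$ and $G$), which are lower order and absorbable. For conormal derivatives of higher order one commutes a tangential multi-index $\Z^\be$, $|\be|\leq m$, through $(\mathbb{I}-\al\Delta)$; the commutator $[\Z^\be,\al\Delta]$ produces terms with at most two derivatives falling on coefficients and with an $\al$ in front, and using Lemma~\ref{gagliardo} and the fact that one keeps a good sign on $\al\nl2{\nabla\Z^\be\om}^2$, these are controlled. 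This yields something like $\hco m\om + \sqrt\al\,\hco m{\nabla\om} \leq C(\nl2 u + \hco m\oma)$ once the lower-order right-hand side is bounded by $\nl2 u + \hco m\oma$ via iteration (as in Lemma~\ref{lemmareg}) and the boundary contributions are handled through the Navier identities and a trace estimate.

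Finally I would assemble the pieces: from the conormal resolvent estimate we get $\hco m\om \leq C(\nl2 u + \hco m\oma)$, and feeding this into the first application of Lemma~\ref{lemmareg} gives $\xm{m+1}u \leq C(\nl2 u + \hco m\oma)$, which is the claim. (If one also wants the $\sqrt\al\,\xm{m+1}\om$ information it comes out along the way, but it is not needed for the stated inequality.)

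The main obstacle I expect is the boundary-term bookkeeping in the conormal integration-by-parts for $\al\Delta\om$: one must show that every boundary contribution generated when commuting tangential fields past the Laplacian — including those coming from the non-tangential generator $Z_4 = \varphi_0 n d(x,\partial\Omega)$ and from the fact that $\om$ does not vanish tangentially on $\bdry$ — can be expressed, via Lemma~\ref{ident} and the structure of $F(u)$, $G$, purely in terms of $u$ and a controlled number of tangential derivatives, and then absorbed by Young's inequality into the good terms $\nl2 u$, $\al\nl2{\nabla\Z^\be\om}^2$, and the left-hand side. Getting the $\al$-powers to line up so the estimate is genuinely uniform as $\al\to 0$ (rather than degenerating like $\al^{-1}$) is the delicate point, and is presumably why one needs $\al < \al_0(\Om,m)$ rather than all $\al>0$.
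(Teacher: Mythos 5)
Your proposal follows essentially the same route as the paper: a uniform-in-$\al$ resolvent estimate for $1-\al\Delta$ acting on $\om$ in conormal norms, with the boundary terms from integrating $\al\Delta\om$ by parts rewritten via Lemma~\ref{ident} in terms of tangential derivatives of $u$, and the conclusion assembled through Lemma~\ref{lemmareg}. The paper implements this as an induction on $m$ whose induction hypothesis carries the additional good quantities $\al\xm{m+1}{\om}^2$ and $\al^2\hco{m}{\Delta\om}^2$; the latter is what absorbs the $\nl2{u}\nl2{\Delta\om}$-type contributions produced by the boundary terms and commutators (which a good sign on $\al\nl2{\nabla\Z^\be\om}^2$ alone cannot control), i.e.\ exactly the $\al$-power bookkeeping you correctly flag as the delicate point.
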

\begin{proof}
We will in fact show that for  $0<\al<\al_0$ (with $\al_0<1$ small enough to be chosen later) there exists a constant $C$ such that
\begin{equation}\label{basic}
\xm {m+1}u^2+\al\xm {m+1}{\om}^2+\al^2\hco m{\Delta \om}^2\leq C(\nl2u^2+\hco m{\oma}^2).
\end{equation}

We proceed by induction. We consider first the case $m=0$.

\subsection*{Case $m=0$.}
Since $X^1=H^1$ and $H^0_{co}=L^2$, we need to prove that if $0<\al<\al_0$ then
\begin{equation*}
 \nh1u^2+\al\nh1{\om}^2+\al^2\nl2{\Delta \om}^2\leq C(\nl2u^2+\nl2{\oma}^2)
\end{equation*}
for some constant $C=C(\al_0,\Om)$.

Clearly
\begin{equation*}
\begin{split}
 \nl2\oma^2&=\nl2\om^2+\al^2\nl2{\Delta\om}^2-2\al\int_\Om\om\cdot\Delta\om \\
&=\nl2\om^2+\al^2\nl2{\Delta\om}^2+2\al\nl2{\nabla\om}^2-    2\al\int_\bdry\om\cdot\partial_n\om \\
\end{split}
\end{equation*}

We use Lemma \ref{ident} to write the boundary terms under the form:
\begin{align*}
\int_\bdry\om\cdot\partial_n\om
&=\int_\bdry\om\tan\cdot(\partial_n\om)\tan  +\int_\bdry\om\nor(\partial_n\om)\nor  \\
&=\int_\bdry F(u)\cdot(\partial_n\om)\tan  +\int_\bdry\om\nor G(u,(n\times\nabla) u)\\
&\equiv I_1+I_2.
\end{align*}
We go back to an integral on $\Om$ by means of the Stokes formula:
\begin{equation*}
I_2=\int_\bdry\om\nor G(u,(n\times\nabla) u)
=\int_\bdry\|n\|^2\om\nor G(u,(n\times\nabla) u)
=\int_\Om\sum_i\partial_i[n_i\om\nor G(u,(n\times\nabla) u)]
\end{equation*}
so
\begin{equation*}
|I_2|\leq C(\nl2\om\nh2u+\nh1\om\nh1u).
\end{equation*}

We use again the Stokes formula to write
\begin{multline*}
I_1=\int_\bdry F(u)\cdot(\partial_n\om)\tan
=  \int_\bdry F(u)\cdot  (\sum_i n_i \partial_i\om)\tan
=  \int_\bdry \sum_i n_i F(u)\cdot (\partial_i\om)\tan\\
=  \int_\Om \sum_i \partial_i [F(u)\cdot (\partial_i\om)\tan].
\end{multline*}
Expanding the last term above and separating the terms containing second order derivatives of $\om$, we observe that we can bound pointwise
\begin{equation*}
 |\sum_i \partial_i [F(u)\cdot (\partial_i\om)\tan]- F(u)\cdot (\Delta\om)\tan|
\leq C(|u|+|\nabla u|)|\nabla\om|.
\end{equation*}
We infer that
\begin{equation*}
|I_1|\leq C\int_\Om(|u|+|\nabla u|)|\nabla\om|+C\int_\Om|F(u)\cdot (\Delta\om)\tan|
\leq C\nh1u\nl2{\nabla\om}+C\nl2u\nl2{\Delta\om}.
\end{equation*}

The previous relations imply that
\begin{equation*}
  \bigl|\int_\bdry\om\cdot\partial_n\om\bigr|\leq  C\nl2\om\nh2u
+C\nh1{ \om}\nh1u+ C\nl2u\nl2{\Delta\om}
\end{equation*}

But we have that
$\nl2u+\nl2\om\simeq\nh1u$ and $\nl2u+\nh1\om\simeq\nh2u$ (see \cite[Proposition 1.4]{foias_remarques_1978}), so we can further write that
\begin{equation*}
  \bigl|\int_\bdry\om\cdot\partial_n\om\bigr|\leq  C(\nl2u+\nl2\om)(\nl2\om+\nl2{\nabla\om})+C\nl2u\nl2{\Delta\om}
\end{equation*}

We conclude that
\begin{align*}
 \nl2\oma^2
&\geq \nl2\om^2+\al^2\nl2{\Delta\om}^2+2\al\nl2{\nabla\om}^2
-  C\al\nl2u\nl2{\Delta\om} \\
&\hskip 5cm -C\al(\nl2u+\nl2\om)(\nl2\om+\nl2{\nabla\om})\\
&\geq (1-C\al)\nl2\om^2+\frac{\al^2}2\nl2{\Delta\om}^2+\al\nl2{\nabla\om}^2 -C\nl2u^2.
\end{align*}

We finally obtain that
\begin{align*}
\nl2u^2+\ep_0   \nl2\oma^2
&\geq (1-C\ep_0)\nl2u^2+ \ep_0(1-C\al)\nl2\om^2+\frac{\ep_0\al^2}2\nl2{\Delta\om}^2+\ep_0\al\nl2{\nabla\om}^2\\
&\geq C(\ep_0,\al)(\nh1u^2+\al^2\nl2{\Delta\om}^2+\al\nl2{\nabla\om}^2)
\end{align*}
provided that $\al$ and $\ep_0$ are sufficiently small.
This completes the proof in the case $m=0$.

We show now that step $m-1$ implies step $m$.

\subsection*{Step $m-1$ implies step $m$}
We assume that we have proved
\begin{equation}\label{basic1}
\xm {m}u^2+\al\xm m{\om}^2+\al^2\hco {m-1}{\Delta \om}^2\leq K_{m-1}(\nl2u^2+\hco {m-1}{\oma}^2)
\end{equation}
for some constant $K_{m-1}$ and we want to prove that
\begin{equation}\label{basic2}
\xm {m+1}u^2+\al\xm{m+1}{\om}^2+\al^2\hco m{\Delta \om}^2\leq K_m(\nl2u^2+\hco m{\oma}^2)
\end{equation}
for some other constant $K_m$.

Let $\zm=\partial_Z^\be$ be a tangential derivative of order less than $m$:  $\be\in\N^7$ verifies $|\be|\leq m$.

If $\partial_W$ is a tangential derivative, we will denote by $\partial_W^t$ the transpose of $\partial_W$, \textit{i.e.} if $\partial_W=\sum_i W_i\partial_i$ then $\partial_W^t f=-\sum_i\partial_i(W_i f)=-\dive W f-\partial_W f$. Because  $\partial_W$ is a tangential derivative, we have that $\int_\Om\partial_Wf g=\int_\Om f\partial_W^t g$ for all $f$ and $g$ without need to assume any boundary conditions on $f$ and $g$.

We have that
\begin{equation*}
 \nl2{\zm\oma}^2 = \nl2{\zm\om}^2+\al^2\nl2{\zm\Delta\om}^2-2\al\int_\Om \zm\om\cdot \zm\Delta\om
\end{equation*}
We perform now several integrations by parts:
\begin{equation}\label{somebound}
\begin{aligned}
 -\int_\Om \zm\om\cdot \zm\Delta\om
&=-\int_\Om (\zm)^t\zm\om\cdot \Delta\om \\
&=\int_\Om \nabla (\zm)^t\zm\om\cdot \nabla\om -\int_\bdry (\zm)^t\zm\om\cdot \partial_n\om
\end{aligned}
\end{equation}
We wish now to commute the gradient with $\zm$. Repeatedly using the formula
\begin{equation}\label{repeat}
\int_\Om\partial_i\partial_W^tf\;g
=\int_\Om \partial_if\;\partial_Wg
-\int_\Om fg\;\partial_i\dive w
-\sum_j\int_\Om\partial_j f\,g\;\partial_iw_j
\end{equation}
we observe that we can write
\begin{equation*}
\int_\Om \nabla (\zm)^t\zm\om\cdot \nabla\om
=  \int_\Om \nabla \zm\om\cdot\zm \nabla\om +I_1
\end{equation*}
where
\begin{equation*}
|I_1|\leq C\xm{m+1}\om \xm m\om .
\end{equation*}
Moreover,
\begin{equation*}
 \int_\Om \nabla \zm\om\cdot\zm \nabla\om
=\frac12 \nl2{\nabla \zm\om}^2+\frac12 \nl2{\zm\nabla \om}^2-\frac12\nl2{[\nabla, \zm]\om}^2
\end{equation*}
where the last term can be bounded by
\begin{equation*}
 \nl2{[\nabla, \zm]\om}\leq C\xm m\om^2.
\end{equation*}
It remains to estimate the boundary term in \eqref{somebound}.
To do that, we proceed as in the case $m=0$ by decomposing $\om=\om\tan+\om\nor$ and writing
\begin{align*}
\int_\bdry (\zm)^t\zm\om\cdot \partial_n\om
&=\int_\bdry [(\zm)^t\zm\om]\tan\cdot (\partial_n\om)\tan +\int_\bdry [(\zm)^t\zm\om]\nor (\partial_n\om)\nor   \\
&\equiv J_1+J_2
\end{align*}

Using Lemma \ref{ident} and the Stokes formula we can write
\begin{align*}
J_2&= \int_\bdry n\cdot (\zm)^t\zm\om \  G(u,(n\times\nabla) u) \\
&=\sum_i \int_\Om \partial_i\bigl[(\zm)^t\zm\om_i\  G(u,(n\times\nabla) u)\bigr]\\
&=\sum_i \int_\Om \partial_i\bigl[(\zm)^t\zm\om_i\bigr]\  G(u,(n\times\nabla) u)
+\sum_i \int_\Om (\zm)^t\zm\om_i\  \partial_i\bigl[G(u,(n\times\nabla) u)\bigr]\\
&=\sum_i \int_\Om \partial_i\bigl[(\zm)^t\zm\om_i\bigr]\  G(u,(n\times\nabla) u)
+\sum_i \int_\Om \partial_Z^{m+1}\om_i\  \partial_Z^{m-1}\partial_i\bigl[G(u,(n\times\nabla) u)\bigr]\\
&\equiv J_{21}+J_{22}
\end{align*}
where $\Z^{m+1}$ denotes a tangential derivative of order $\leq m+1$ and $\Z^{m-1}$ denotes a tangential derivative of order $\leq m-1$.
Clearly
\begin{equation*}
|J_{22}|\leq C\nl2{\partial_Z^{m+1}\om}\nl2{\partial_Z^{m-1}\nabla[G(u,(n\times\nabla) u)]}
\leq C\hco{m+1}\om  \xm {m+1}u
\end{equation*}
Repeatedly using relation \eqref{repeat} we can also bound
\begin{equation*}
|J_{21}|\leq C \xm{m+1}\om  \hco{m+1}u
\end{equation*}

\medskip

We go now to the estimate of the term $J_1$. Recalling that in the neighborhood of the boundary we have the decomposition $\om=n\times \om\tan+\om\nor n$, we can write
\begin{multline*}
J_1= \int_\bdry [(\zm)^t\zm\om]\tan\cdot (\partial_n\om)\tan
=\int_\bdry [(\zm)^t\zm(n\times\om\tan+\om\nor n)]\tan\cdot (\partial_n\om)\tan \\
=\int_\bdry [(\zm)^t\zm(n\times \om\tan)]\tan\cdot (\partial_n\om)\tan
+\int_\bdry [(\zm)^t\zm(\om\nor n)]\tan\cdot (\partial_n\om)\tan
\equiv J_{11}+J_{12}.
\end{multline*}
Using Lemma \ref{ident},  the fact that $\Z$ is a tangential derivative and that $\Z^t$ is $-\Z$ plus a zero order term, we deduce that $(\zm)^t\zm(n\times\om\tan)=(\zm)^t\zm (n\times F(u))=-(\zm)^t\zm F(u)\tan$ on the boundary. We infer that
\begin{align*}
-J_{11}
&= \sum_i \int_\bdry n_i[(\zm)^t\zm F(u)\tan]\tan \cdot(\partial_i\om)\tan \\
&= \sum_i \int_\Om \partial_i\left\{[(\zm)^t\zm F(u)\tan]\tan \cdot(\partial_i\om)\tan \right\}\\
&=\sum_i \int_\Om \partial_i\left\{[(\zm)^t\zm F(u)\tan]\tan \right\} \cdot(\partial_i\om)\tan
+\sum_i \int_\Om [(\zm)^t\zm F(u)\tan]\tan \cdot\partial_i[(\partial_i\om)\tan ]\\
&=\sum_i \int_\Om \partial_i\left\{[(\zm)^t\zm F(u)\tan]\tan \right\} \cdot(\partial_i\om)\tan
+\sum_i \int_\Om [(\zm)^t\zm F(u)\tan]\tan \cdot(\Delta\om)\tan \\
&\hskip 7cm +\sum_i \int_\Om [(\zm)^t\zm F(u)\tan]\tan \cdot(\partial_i\om\times \partial_i n)\\
&\equiv J_{111}+J_{112}+J_{113}\cdot
\end{align*}
Using relation \eqref{repeat}  $m$ times we can bound
\begin{equation*}
|J_{111}|\leq C  \xm {m+1}u \xm {m+1}\om
\end{equation*}
Integrating by parts $m$ times allows to estimate
\begin{equation*}
|J_{112}|\leq C  \hco {m}u \hco {m}{\Delta\om}
\end{equation*}
and
\begin{equation*}
|J_{113}|\leq C  \hco {m}u \xm {m+1}{\om}.
\end{equation*}

This completes the estimate of the term $J_{11}$. We claim that exactly the same estimates hold true for the term $J_{12}$. Indeed, the key point that allowed us to estimate $J_{11}$ is the fact that thanks to Lemma \ref{ident}, on the boundary the expression $[(\zm)^t\zm(n\times\om\tan)]\tan$ can be written as a combination of tangential derivatives of $u$ of order $2m$ at most. But exactly the same holds true for  the expression $[(\zm)^t\zm(\om\nor n)]\tan$. Indeed, because of the identity $\om\nor=\om\cdot n=(n\times \nabla)\cdot u$ and thanks to the Leibniz formula, we can write
\begin{equation*}
 [(\zm)^t\zm(\om\nor n)]\tan=  (\zm)^t\zm((n\times \nabla)\cdot u\, n)\times n
= (\zm)^t\zm((n\times \nabla)\cdot u)\, n\times n +\Gamma
\end{equation*}
where the expression $\Gamma$ is a linear combination of tangential derivatives of $u$ of order $2m$ at most. The first term on the right-hand side vanishes, so we can conclude that the estimates we proved for $J_{11}$ hold true for $J_{12}$ as well.

\medskip

From the previous estimates we infer that
\begin{multline*}
\nl2 {\zm\oma}^2\geq  \nl2{\zm\om}^2+\al^2\nl2{\zm\Delta\om}^2+\al \nl2{\nabla \zm\om}^2+\al\nl2{\zm\nabla \om}^2\\
-C\al(\xm m\om\xm{m+1}\om+\hco{m+1}\om\xm{m+1}u+\xm{m+1}\om\hco{m+1}u\\
+\xm{m+1}u\xm {m+1}\om+\hco mu\hco m{\Delta\om})
\end{multline*}

Summing over all possible choices of $\zm$ we get
\begin{equation}\label{fx}
\begin{aligned}
\hcom m {\oma}^2
& \geq \hcom m {\om}^2+\al^2\hcom m{\Delta\om}^2+\al\hcom m {\nabla \om}^2
-C\al(\xm m\om\xm{m+1}\om+\hco{m+1}\om\xm{m+1}u\\
&\hskip 4cm +\xm{m+1}\om\hco{m+1}u+\xm{m+1}u\xm {m+1}\om+\hco mu\hco m{\Delta\om})\\
&=  \hcom m {\om}^2+\al^2\hcom m{\Delta\om}^2+\al\hcom m {\nabla \om}^2
-C\al R
\end{aligned}
\end{equation}
where
\begin{equation*}
R=  \xm m\om\xm{m+1}\om+\hco{m+1}\om\xm{m+1}u+\xm{m+1}\om\hco{m+1}u
+\xm{m+1}u\xm {m+1}\om+\hco mu\hco m{\Delta\om} .
\end{equation*}

To prove \eqref{basic2} it clearly suffices to show that there exists $\ep>0$ and $K'_m$ such that
\begin{equation}\label{basic2a}
\xm {m+1}u^2+\al\xm{m+1}{\om}^2+\al^2\hco m{\Delta \om}^2\leq K'_m(\nl2u^2+\hco {m-1}{\oma}^2+\ep\hcom m{\oma}^2)
\end{equation}
Using \eqref{basic1} and \eqref{fx} we have that
\begin{multline*}
\nl2u^2+\hco {m-1}{\oma}^2+\ep\hcom m{\oma}^2
\geq \frac1{K_{m-1}} (\xm {m}u^2+\al\xm m{\om}^2+\al^2\hco {m-1}{\Delta \om}^2  ) \\
+\ep\hcom m {\om}^2+\ep\al^2\hcom m{\Delta\om}^2+\ep\al\hcom m {\nabla \om}^2
-C\al\ep R
\end{multline*}

Thanks to Lemma \ref{lemmareg} we can estimate
\begin{equation*}
  \frac1{K_{m-1}} \xm {m}u^2 +\ep\hcom m {\om}^2=  \frac1{2K_{m-1}} \xm {m}u^2 + \frac1{2K_{m-1}} \xm {m}u^2 +\ep\hcom m {\om}^2
\geq  \frac1{2K_{m-1}} \xm {m}u^2+\frac\ep{B^2_m} \xm {m+1}u^2
\end{equation*}
provided that $\ep\leq \frac1{2K_{m-1}}$ which we will assume to hold true in what follows. Writing also
\begin{equation*}
\frac1{K_{m-1}} \xm m\om^2 + \ep \hcom m{\nabla \om}^2
\geq \frac1{2K_{m-1}} \xm m\om^2 +C_1 \ep \xm{m+1}\om^2
\end{equation*}
we infer from the above relations that
\begin{multline}\label{final1}
 \nl2u^2+\hco {m-1}{\oma}^2+\ep\hcom m{\oma}^2
\geq  \frac1{2K_{m-1}} (\xm {m}u^2+\al\xm m{\om}^2+\al^2\hco {m-1}{\Delta \om}^2  )\\
+C_2\ep (\xm {m+1}u^2+\al\xm {m+1}{\om}^2+\al^2\hco {m}{\Delta \om}^2 )-C\al\ep R.
\end{multline}

It remains to estimate the term $C\al\ep R$. We bound first
\begin{align*}
R&=  \xm m\om\xm{m+1}\om+\hco{m+1}\om\xm{m+1}u+\xm{m+1}\om\hco{m+1}u
+\xm{m+1}u\xm {m+1}\om+\hco mu\hco m{\Delta\om}  \\
&\leq C(\xm m\om\xm{m+1}\om+\xm{m+1}\om\xm{m+1}u+\xm mu\hco m{\Delta\om})
\end{align*}
We use next Lemma \ref{lemmareg} to write $\xm {m+1}u\leq C(\xm mu+\xm m\om)$ and deduce that
\begin{align*}
C\al\ep R&\leq  C\al\ep \xm{m+1}\om(\xm m\om+\xm{m}u)+ C\al\ep \xm mu\hco m{\Delta\om} \\
&\leq \frac{C_2\ep}{2} (\al\xm {m+1}{\om}^2+\al^2\hco {m}{\Delta \om}^2) +C\ep(1+\al) \xm mu^2+C\al\ep\xm m\om^2.
\end{align*}
Using this bound in \eqref{final1} implies that
\begin{align*}
 \nl2u^2+\hco {m-1}{\oma}^2+\ep\hcom m{\oma}^2
&\geq \frac{C_2\ep}2 (\xm {m+1}u^2+\al\xm {m+1}{\om}^2+\al^2\hco {m}{\Delta \om}^2 ) \\
&\hskip 1cm  +\bigl(\frac1{2K_{m-1}}-C\ep(1+\al)\bigr) \xm mu^2
+\al\bigl(\frac1{2K_{m-1}}-C\ep\bigr) \xm m\om^2\\
& \geq \frac{C_2\ep}2 (\xm {m+1}u^2+\al\xm {m+1}{\om}^2+\al^2\hco {m}{\Delta \om}^2 )
\end{align*}
provided that  $\ep$ is sufficiently small. The above relation implies that  \eqref{basic2a} holds true. This completes the proof.
\end{proof}

We will also need some $W^{1,\infty}_{co}$ elliptic estimates for the operator $1-\al\Delta$ in the setting of the conormal Sobolev spaces. We start with an $L^\infty$ bound.
\begin{lemma}\label{maxprinc}
There exists a constant $C$ independent of $\al$ such that the following relation holds true:
\begin{equation*}
\nl\infty h+\sqrt\al\nl\infty{\nabla h}+\al\nl\infty{\Delta h}\leq C(\nl\infty{h-\al\Delta h}+  \|h\|_{L^{\infty}(\partial\Omega)}+\sqrt\al \|h\|_{W^{1,\infty}(\partial\Omega)} +\al\|h\|_{W^{2,\infty}(\partial\Omega)}).
\end{equation*}
\end{lemma}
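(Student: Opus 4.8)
The plan is to treat the three terms on the left separately. Writing $f=h-\al\Delta h$, the bound for $\al\nl\infty{\Delta h}$ is immediate, since $\al\Delta h=h-f$ gives $\al\nl\infty{\Delta h}\leq\nl\infty h+\nl\infty f$; so it remains to control $\nl\infty h$ and $\sqrt\al\,\nl\infty{\nabla h}$. For the first I would invoke the maximum principle for $1-\al\Delta$: the functions $w_\pm=\nl\infty f+\|h\|_{L^\infty(\bdry)}\pm h$ satisfy $w_\pm-\al\Delta w_\pm=\nl\infty f+\|h\|_{L^\infty(\bdry)}\pm f\geq\|h\|_{L^\infty(\bdry)}\geq0$ in $\Om$ and $w_\pm\geq0$ on $\bdry$, whence $w_\pm\geq0$ in $\Om$, i.e. $\nl\infty h\leq\nl\infty f+\|h\|_{L^\infty(\bdry)}$.

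The heart of the matter is the gradient bound, and the key device is the rescaling $x\mapsto x/\sqrt\al$, which turns $1-\al\Delta$ into a \emph{uniformly} elliptic operator on sets of unit size. Since $\al$ is small in our setting we may assume $0<\al<\al_0$, and we split $\overline\Om=\Om^{\mathrm{int}}\cup\Om^{\mathrm{bd}}$ with $\Om^{\mathrm{int}}=\{d(x,\bdry)\geq2\sqrt\al\}$ and $\Om^{\mathrm{bd}}=\{d(x,\bdry)<2\sqrt\al\}$. For $x_0\in\Om^{\mathrm{int}}$ we have $B(x_0,\sqrt\al)\subset\Om$; putting $g(y)=h(x_0+\sqrt\al\,y)$, $\psi(y)=f(x_0+\sqrt\al\,y)$ on the unit ball $B_1$ gives $-\Delta_y g+g=\psi$, so $\Delta_y g=g-\psi\in L^\infty(B_1)$. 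Calder\'on--Zygmund interior estimates then give $\|g\|_{W^{2,p}(B_{1/2})}\leq C_p(\nl\infty g+\nl\infty\psi)$ for every finite $p$; choosing $p>3$, using $W^{2,p}(B_{1/2})\hookrightarrow C^1(\overline{B_{1/2}})$, evaluating at $y=0$ and undoing the scaling ($\nabla_y g=\sqrt\al\,(\nabla h)(x_0+\sqrt\al\,y)$) yields $\sqrt\al\,|\nabla h(x_0)|\leq C(\nl\infty h+\nl\infty f)$, with $C$ independent of $\al$ and of $x_0$.

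For $x_0\in\Om^{\mathrm{bd}}$, let $x_0^\ast\in\bdry$ be a nearest point and fix a smooth diffeomorphism $\Phi$ from a fixed-size neighbourhood of $x_0^\ast$ in $\overline\Om$ onto the half-ball $B_1^+=\{z\in B_1:z_3>0\}$, with $\Phi(x_0^\ast)=0$, carrying $\bdry$ to the flat part $\{z_3=0\}$; by smoothness and compactness of $\Om$ these, and their inverses, can be chosen with derivatives bounded uniformly in $x_0^\ast$. In the $z$ variables the equation becomes $-\al L_1\widehat h+\widehat h=\widehat f$ on $B_1^+$, with $\widehat h=h\circ\Phi^{-1}$ and $L_1$ a second-order operator, uniformly elliptic with smooth, uniformly bounded coefficients. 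Rescaling $z=\sqrt\al\,y$, the function $g(y):=\widehat h(\sqrt\al\,y)$ solves, on a fixed half-ball $B_R^+$ (with $R$ fixed and $\al_0\leq R^{-2}$),
\begin{equation*}
-\sum_{i,j}\widetilde a_{ij}(y)\,\partial_{y_i}\partial_{y_j}g-\sqrt\al\sum_i\widetilde b_i(y)\,\partial_{y_i}g+g=\widehat f(\sqrt\al\,y),
\end{equation*}
where $\widetilde a_{ij}(y)=a_{ij}(\sqrt\al\,y)$ are uniformly elliptic with uniformly bounded Lipschitz norms and the first-order coefficients are uniformly bounded. To absorb the trace I would set $G(y)=g(y_1,y_2,0)$ (independent of $y_3$); because $G$ depends only on the tangential variables and each $\partial_{y_i}$ produces a factor $\sqrt\al$, one checks $\|G\|_{W^{2,\infty}(B_R^+)}\leq C(\|h\|_{L^\infty(\bdry)}+\sqrt\al\,\|h\|_{W^{1,\infty}(\bdry)}+\al\,\|h\|_{W^{2,\infty}(\bdry)})$. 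Then $v=g-G$ vanishes on the flat part of $B_R^+$ and solves the same elliptic equation with right-hand side bounded in $L^\infty(B_R^+)$ by $C(\nl\infty f+\|G\|_{W^{2,\infty}(B_R^+)})$. The local boundary $W^{2,p}$ estimate for the homogeneous Dirichlet problem — whose constant depends only on the uniform ellipticity and coefficient bounds and on $R$ — gives $\|v\|_{W^{2,p}(B_{R/2}^+)}\leq C_p(\nl\infty f+\|G\|_{W^{2,\infty}(B_R^+)}+\nl\infty h+\|h\|_{L^\infty(\bdry)})$; taking $p>3$, embedding into $C^1$, adding back $\nabla_y G$, evaluating at the point corresponding to $x_0$ (which lies in $B_{R/2}^+$ once $R$ is fixed suitably large, depending only on $\Phi$, since $d(x_0,\bdry)<2\sqrt\al$) and undoing both changes of variables gives once more
\begin{equation*}
\sqrt\al\,|\nabla h(x_0)|\leq C\bigl(\nl\infty f+\nl\infty h+\|h\|_{L^\infty(\bdry)}+\sqrt\al\,\|h\|_{W^{1,\infty}(\bdry)}+\al\,\|h\|_{W^{2,\infty}(\bdry)}\bigr).
\end{equation*}

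Taking the supremum over $x_0\in\overline\Om$ of the interior and boundary bounds, and then using $\nl\infty h\leq\nl\infty f+\|h\|_{L^\infty(\bdry)}$ to eliminate $\nl\infty h$ from the right-hand sides, would finish the proof. The only genuine difficulty is the bookkeeping needed to keep every elliptic constant independent of $\al$ and of the base point $x_0$, and this is exactly what the zoom $x\mapsto x/\sqrt\al$ buys: it converts $1-\al\Delta$ into a family of uniformly elliptic operators on fixed-size balls and half-balls whose coefficients — after flattening $\bdry$ by an $\al$-independent diffeomorphism — are uniformly controlled, so classical interior and boundary $W^{2,p}$ estimates apply with uniform constants. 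One must also match the powers of $\sqrt\al$ on the boundary terms with the order of the tangential derivatives, which is why precisely $\sqrt\al\,\|h\|_{W^{1,\infty}(\bdry)}$ and $\al\,\|h\|_{W^{2,\infty}(\bdry)}$ appear.
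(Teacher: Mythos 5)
Your argument is correct, but for the key gradient bound it follows a genuinely different route from the paper's. The paper reduces to the case $h|_{\bdry}=0$ by subtracting a $W^{2,\infty}$ extension $H$ of the boundary values satisfying $\|H\|_{W^{k,\infty}(\Omega)}\le C\|h\|_{W^{k,\infty}(\bdry)}$, and then invokes the interpolation inequality $\nl\infty{\nabla h}^2\le C\nl\infty{h}\nl\infty{\Delta h}$ of Bethuel--Brezis--H\'elein for functions vanishing on $\bdry$; combined with the maximum principle and the identity $\al\Delta h=h-(h-\al\Delta h)$, this yields $\sqrt\al\,\nl\infty{\nabla h}\le C\nl\infty{h-\al\Delta h}$ in a few lines. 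You instead prove the gradient bound from scratch via the zoom $x\mapsto x/\sqrt\al$, interior and boundary $W^{2,p}$ estimates with constants uniform in $\al$ and in the base point, and a flattening diffeomorphism together with subtraction of the trivially extended trace $G$. Your treatment of $\nl\infty{h}$ (the comparison functions $w_\pm$) and of $\al\nl\infty{\Delta h}$ (triangle inequality) coincides with the paper's. What the paper's route buys is brevity: everything is delegated to one citable interpolation inequality plus a global extension operator. What yours buys is self-containedness modulo standard Calder\'on--Zygmund theory, and it makes transparent why the boundary terms enter with exactly the weights $1$, $\sqrt\al$, $\al$ matching the order of the tangential derivatives. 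Both are complete; the one point worth making fully explicit in your write-up is the reason the $W^{2,p}$ constants are uniform, namely that the rescaled leading coefficients $a_{ij}(\sqrt\al\,y)$ remain uniformly elliptic and have Lipschitz norms that only improve as $\al\to0$, which you do address.
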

\begin{proof}
We assume first that $h$ vanishes on the boundary of $\Om$. In this case, it was proved in \cite[Lemma A.2]{bethuel_asymptotics_1993} the following inequality:
\begin{equation*}
\nl\infty{\nabla h}^2\leq  C_1\nl\infty h\nl\infty {\Delta h}.
\end{equation*}
From the maximum principle we have that
\begin{equation*}
 \nl\infty h\leq \nl\infty {h-\al\Delta h}
\end{equation*}
so
\begin{equation*}
\nl\infty {\Delta h}= \frac1\al\nl\infty {h-\al\Delta h-h}
\leq \frac1\al(\nl\infty {h-\al\Delta h}+\nl\infty {h})
\leq \frac2\al\nl\infty {h-\al\Delta h}.
\end{equation*}
We conclude that
\begin{equation}\label{bbh}
 \al\nl\infty{\nabla h}^2\leq  2C_1\nl\infty h\nl\infty {h-\al\Delta h} \leq  2C_1\nl\infty {h-\al\Delta h}^2
\end{equation}
which completes the proof in the case when $h$ vanishes on the boundary.

We consider now the general case. Let $H$ be a $W^{2,\infty}$  extension of $h\bigl|_{\partial\Om}$ to $\Om$ such that $\|H\|_{W^{k,\infty}(\Omega)}\leq C \|h\|_{W^{k,\infty}(\partial\Omega)}$ for all $k\in\{0,1,2\}$, where $C$ depends only on $\Om$. Because $h-H$ vanishes on the boundary, we can apply relation \eqref{bbh} to $h-H$ to obtain:
\begin{multline*}
 \sqrt\al\nl\infty{\nabla (h-H)}\leq  C\nl\infty {h-H-\al\Delta (h-H)}
\leq  C\nl\infty {h-\al\Delta h} +C\nl\infty H+ C\al\|H\|_{W^{2,\infty}} \\
\leq   C\nl\infty {h-\al\Delta h} +C\|h\|_{L^{\infty}(\partial\Om)}+ C\al\|h\|_{W^{2,\infty}(\partial\Om)}.
\end{multline*}
We infer that
\begin{align*}
 \sqrt\al\nl\infty{\nabla h}
&\leq  \sqrt\al\nl\infty{\nabla H}+ C\nl\infty {h-\al\Delta h} +C\|h\|_{L^{\infty}(\partial\Om)}+ C\al\|h\|_{W^{2,\infty}(\partial\Om)}\\
&\leq C\nl\infty {h-\al\Delta h} +C\|h\|_{L^{\infty}(\partial\Om)}+ C\sqrt\al\|h\|_{W^{1,\infty}(\partial\Om)} +C\al\|h\|_{W^{2,\infty}(\partial\Om)}.
\end{align*}

The $L^\infty$ bound for $h$ follows from the maximum principle and the $L^\infty$ bound for $\Delta h$ is obvious from the triangle inequality: $\al\nl\infty{\Delta h}\leq \nl\infty h+\nl\infty{h-\al\Delta h}$. This completes the proof.
\end{proof}

We can now prove the $W^{1,\infty}_{co}$ estimates  for  $1-\al\Delta$.
\begin{lemma}\label{lemalipco}
Suppose that $u$ is divergence free and verifies the Navier boundary conditions \eqref{navier}.
There exists $\al_0=\al_0(\Om)$ and a constant $C=C(\Om)>0$ such that for all $0<\al<\al_0$ we have that
\begin{equation*}
\lipco\om\leq C(\lipco\oma+ \uco2u+\sqrt\al\uco3u+\al\uco4u).
\end{equation*}
\end{lemma}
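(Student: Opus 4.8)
The plan is to reduce the conormal $W^{1,\infty}_{co}$ estimate to a family of scalar maximum-principle estimates of the type established in Lemma \ref{maxprinc}, applied to tangential derivatives of $\om$. The starting point is the observation that $1-\al\Delta$ does not commute with a tangential derivative $\partial_Z^\be$, but the commutator is lower order: for $|\be|\le1$ (which is all we need here, since we want $\lipco{\cdot}$, i.e. $L^\infty$ control of $\om$ and of its tangential derivatives of order $\le1$) we have $(1-\al\Delta)\partial_Z^\be\om=\partial_Z^\be\oma+\al[\Delta,\partial_Z^\be]\om$, and $[\Delta,\partial_Z^\be]\om$ is a sum of terms involving at most two derivatives of $\om$ with smooth ($\Om$-dependent) coefficients, hence pointwise bounded by $C(|\nabla^2\om|+|\nabla\om|+|\om|)$. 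So each $g:=\partial_Z^\be\om$ solves $g-\al\Delta g = \partial_Z^\be\oma+\al\,(\text{error})$ with the error controlled in $L^\infty$ by $\al\,\uco2{\nabla\om}$-type quantities; combined with Lemma \ref{lemalip} (applied at the $W^{2,\infty}_{co}$ level) this is absorbable into the right-hand side $\uco2 u+\sqrt\al\uco3u+\al\uco4u$, provided $\al_0$ is small.

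Next I would handle the boundary data, which is where Lemma \ref{ident} enters. Lemma \ref{maxprinc} requires $\|g\|_{L^\infty(\bdry)}$, $\sqrt\al\|g\|_{W^{1,\infty}(\bdry)}$ and $\al\|g\|_{W^{2,\infty}(\bdry)}$. For $g=\om$ itself: on $\bdry$ the tangential part $\om\times n=F(u)$ is (by the explicit formula in \eqref{ident0}) a smooth expression in $u$ and $\nabla n$, so bounded by $C\nl\infty u$ on the boundary, while the normal part $\om\cdot n=(n\times\nabla)\cdot u$ is a first tangential derivative of $u$; hence $\|\om\|_{L^\infty(\bdry)}\le C\uco1u$. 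Differentiating these boundary identities tangentially once or twice (using the second identity of Lemma \ref{ident} for the normal derivative $n\cdot\partial_n\om$ when a non-tangential derivative is forced on the boundary, and using that tangential derivatives of boundary data are intrinsic) gives $\|\om\|_{W^{1,\infty}(\bdry)}\le C\uco2u$ and $\|\om\|_{W^{2,\infty}(\bdry)}\le C\uco3u$. Plugging into Lemma \ref{maxprinc} yields
\[
\nl\infty\om+\sqrt\al\nl\infty{\nabla\om}+\al\nl\infty{\Delta\om}\le C\bigl(\lipco\oma+\uco2u+\sqrt\al\uco3u+\al\uco4u\bigr),
\]
where I have also used $\nl\infty\oma\le\lipco\oma$. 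For $g=\partial_Z\om$ one repeats the argument: the boundary data of $\partial_Z\om$ is controlled by one more $u$-derivative than the boundary data of $\om$ (so $\|\partial_Z\om\|_{L^\infty(\bdry)}\le C\uco2u$, etc.), and the interior source $\partial_Z\oma+\al[\Delta,\partial_Z]\om$ is controlled by $\lipco\oma$ plus $\al$ times $W^{3,\infty}_{co}$-norms of $\om$, the latter rewritten via Lemma \ref{lemalip} as $\al(\uco3\om+\uco4u)$ and then the $\al\uco3\om$ piece absorbed after summing, using smallness of $\al_0$. Summing the bounds for $g=\om$ and $g=\partial_{Z_i}\om$, $i=1,\dots,7$, and observing $\lipco\om\simeq\nl\infty\om+\sum_i\nl\infty{\partial_{Z_i}\om}$, gives the claim.

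The main obstacle is the bookkeeping of the $\al$-weights: the interior commutator $\al[\Delta,\partial_Z]\om$ contains genuinely second-order derivatives of $\om$, and the $L^\infty$-norm of $\nabla^2\om$ is \emph{not} directly among the quantities on the right-hand side — only $\al\uco4u$ and $\sqrt\al\uco3u$ are. One must therefore be careful to keep the $\al$ (resp. $\sqrt\al$) factors attached so that, after invoking Lemma \ref{maxprinc} (which already produces $\sqrt\al\nl\infty{\nabla\om}$ and $\al\nl\infty{\Delta\om}$ on its left side) and Lemma \ref{lemalip}, every error term carries enough powers of $\al$ to be either absorbed on the left or matched to $\sqrt\al\uco3u$, $\al\uco4u$ on the right; this forces the choice of $\al_0$ small. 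A secondary technical point is that when the derivative $\partial_Z$ is applied near the boundary one may produce a normal derivative of $\om$ on $\bdry$, for which one must use the second identity of Lemma \ref{ident} — expressing $n\cdot\partial_n\om$ through $F(u)$ and $(n\times\nabla)u$ — rather than a naive tangential bound; but that identity is exactly engineered for this, so this is routine once set up.
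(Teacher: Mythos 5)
Your overall architecture matches the paper's: apply the maximum-principle estimate of Lemma \ref{maxprinc} to $\om$ and to each $\partial_{Z_i}\om$, read the boundary data off Lemma \ref{ident} (on $\bdry$ one has $\om=n\,[(n\times\nabla)\cdot u]-F(u)\times n$, whence $\|\om\|_{W^{k,\infty}(\partial\Omega)}\leq C\uco{k+1}u$ using only intrinsic tangential derivatives of the boundary identity — the second identity of Lemma \ref{ident} is in fact never needed here, since the boundary norms in Lemma \ref{maxprinc} are intrinsic), and then absorb the commutator $\al[\partial_Z,\Delta]\om$ using smallness of $\al$. Up to that point the proposal is sound.

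The gap is in your treatment of that commutator. You assert it is controlled by ``$\al$ times $W^{3,\infty}_{co}$-norms of $\om$, the latter rewritten via Lemma \ref{lemalip} as $\al(\uco3\om+\uco4u)$.'' This cannot work: $[\partial_Z,\Delta]\om$ contains the genuinely non-tangential term $\partial_n^2\om$ (from $2\sum_i\nabla Z_i\cdot\nabla\partial_i\om$), which no conormal norm $\uco k\om$ of any order controls; moreover Lemma \ref{lemalip} applied to $\om$ would produce $\curl\om=-\Delta u$ on the right-hand side, which is not among the admissible quantities. You correctly name this as ``the main obstacle'' but the resolution you offer does not resolve it. The paper's actual mechanism is the auxiliary estimate
\begin{equation*}
\|\om\|_{W^{2,\infty}}\leq C\bigl(\|\om\|_{W^{1,\infty}}+\lipco{\nabla\om}+\nl\infty{\Delta\om}\bigr),
\end{equation*}
proved by writing, on $\Om_\delta$ where $\|n\|=1$, $\nabla=-n\times(n\times\nabla)+n\partial_n$ and observing that $(n\partial_n)\cdot(n\partial_n)=\partial_n^2$, so that $\partial_n^2=\Delta$ plus terms each containing at least one tangential derivative. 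With this in hand, $\al\nl\infty{[\partial_Z,\Delta]\om}\leq C\al(\|\om\|_{W^{1,\infty}}+\lipco{\nabla\om})+C\al\nl\infty{\Delta\om}$: the last piece is controlled by the $m=0$ bound already obtained for $\om$ itself, and the first two are absorbed into the $\sqrt\al\|\om\|_{W^{1,\infty}}+\sqrt\al\lipco{\nabla\om}$ terms that Lemma \ref{maxprinc} puts on the left-hand side, for $\al<\al_0$ small. Without some version of this decomposition of $\partial_n^2$ through $\Delta$, your bookkeeping of $\al$-weights cannot close.
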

\begin{proof}
Recall that
\begin{equation}\label{eqom}
\om-\al\Delta\om=\oma
\end{equation}
and that
\begin{equation*}
\om\cdot n=(n\times\nabla)\cdot u.
\end{equation*}
Because of the identity $\om\|n\|^2=n(\om\cdot n)-(\om\times n)\times n$ and using relation \eqref{ident0} we observe that
\begin{equation*}
\om=n [(n\times\nabla)\cdot u]-F(u)\times n\quad\text{on }\partial\Om.
\end{equation*}
Therefore, for $k\in\N$, we have the bound
\begin{equation}\label{boundombord}
 \|\om\|_{W^{k,\infty}(\partial\Omega)}\leq C\uco{k+1}u.
\end{equation}

We use  Lemma \ref{maxprinc} to deduce that
\begin{equation}\label{Y}
\begin{aligned}
 \nl\infty \om+\sqrt\al\|\om\|_{W^{1,\infty}}+\al\nl\infty{\Delta \om}
&\leq C(\nl\infty{\oma}+  \|\om\|_{L^{\infty}(\partial\Omega)}+\sqrt\al \|\om\|_{W^{1,\infty}(\partial\Omega)} \\
&\hskip 7cm +\al\|\om\|_{W^{2,\infty}(\partial\Omega)})\\
&\leq C(\nl\infty{\oma}+ \uco1u+\sqrt\al\uco2u+\al\uco3u).
\end{aligned}
\end{equation}

Next we apply a tangential derivative $\partial_Z$ to \eqref{eqom} and obtain
\begin{equation*}
\partial_Z\om-\al\Delta\partial_Z\om=\partial_z\oma +\al[\partial_Z, \Delta]\om.
\end{equation*}
As above, we deduce from  Lemma \ref{maxprinc} the following inequality:
\begin{multline}\label{X}
 \nl\infty {\Z\om}+\sqrt\al\nl\infty{\nabla \Z\om}
\leq C(\nl\infty{\Z\oma}+\al\nl\infty{[\partial_Z, \Delta]\om}+ \uco2u\\ +\sqrt\al\uco3u+\al\uco4u).
\end{multline}

We prove now that the following estimate holds true:
\begin{equation}\label{secondlip}
\|\om\|_{W^{2,\infty}}  \leq C(\|\om\|_{W^{1,\infty}}+\lipco{\nabla\om}+\nl\infty{\Delta\om}).
\end{equation}
The inequality is obvious in $\Om\setminus\Om_\delta$, so we only need to prove it on $\Om_\delta$. But in this region we have that $\|n\|=1$ so
\begin{equation*}
\nabla=-n\times(n\times\nabla)+n\partial_n.
\end{equation*}
Because $n\times\nabla$ is a tangential derivative, to prove \eqref{secondlip} it suffices to show that
\begin{equation}\label{secondlip2}
\|\partial_n^2\om\|_{W^{2,\infty}(\Om_\delta)}  \leq C(\|\om\|_{W^{1,\infty}}+\lipco{\nabla\om}+\nl\infty{\Delta\om}).
\end{equation}
But
\begin{equation*}
\Delta=\nabla\cdot\nabla= \bigl(n\times(n\times\nabla)-n\partial_n\bigr)\cdot  \bigl(n\times(n\times\nabla)-n\partial_n\bigr)
\end{equation*}
and
\begin{equation*}
 (n\partial_n)\cdot (n\partial_n) =n\cdot\partial_nn\partial_n+\|n\|^2\partial_n^2=\frac12\partial_n(\|n\|^2)\partial_n+\|n\|^2\partial_n^2=\partial_n^2\quad\text{on }\Om_\delta
\end{equation*}
because $\|n\|=1$ on $\Om_\delta$. This observation immediately implies relation \eqref{secondlip2}, so \eqref{secondlip} is proved.

Next, since $[\partial_Z, \Delta]\om$ is a linear combination of derivatives of second order or less of $\om$, we can use relations \eqref{secondlip} and \eqref{Y} to bound
\begin{multline*}
 \al \nl\infty{[\partial_Z, \Delta]\om}\leq C\al\|\om\|_{W^{2,\infty}}\leq C\al(\|\om\|_{W^{1,\infty}}+\lipco{\nabla\om}+\nl\infty{\Delta\om})\\
\leq C\al(\|\om\|_{W^{1,\infty}}+\lipco{\nabla\om})+C(\nl\infty{\oma}+ \uco1u+\sqrt\al\uco2u+\al\uco3u).
\end{multline*}
Using this relation in the bound for $\Z\om$ given in \eqref{X}, adding to the bound for $\om$ given in \eqref{Y} and summing over all tangential derivatives $\Z$ implies
\begin{multline*}
 \lipco{\om}+ \sqrt\al\|\om\|_{W^{1,\infty}}+\sqrt\al \lipco{\nabla\om}
\leq C\al(\|\om\|_{W^{1,\infty}}+\lipco{\nabla\om})\\
 +C(\lipco\oma+ \uco2u+\sqrt\al\uco3u+\al\uco4u).
\end{multline*}
If $\al$ is sufficiently small, the first term on the right-hand side can be absorbed in the left-hand side and the conclusion follows.
\end{proof}

\section{A priori estimates}
In this section, we prove some \textit{a priori} estimates for Theorems \ref{uniformtime} and \ref{euler}. These  \textit{a priori} estimates will be used in conjunction with an approximation procedure to yield the rigorous existence of the solutions in the next section. Let us first briefly
explain why introduce conormal spaces into this problem.

The standard existence result for $\al$-Euler in three space dimensions gives strong solutions in $H^3$, obtained by means of $H^3$ a priori estimates on the velocity. For Navier boundary conditions, one does not expect $H^3$ bounds on the velocity uniformly in $\al$, which would be required to obtain a time of existence uniform in $\al$. Indeed, if such bounds were available, then by the result from \cite{busuioc_incompressible_2012}, we would conclude that solutions of the $\al$-Euler equations converge weakly in $H^3$ to a solution of the Euler equation. But weak convergence in $H^3$ carries the Navier boundary conditions to the limit, so we would find that the corresponding solution of the Euler equation would verify the Navier boundary condition, something which is not true in general.

Something else is needed to obtain a uniform time of existence. Ideally, we would like to prove existence of weak $H^1$ solutions, but even though $H^1$ energy estimates are available, the nonlinearity is too strong to obtain an existence result from such estimates. We propose instead a new type of strong solution, whose regularity involves only one normal derivative and not two or more at the boundary. As we explained above, due to the discrepancy between Navier and non-penetration boundary conditions, we do not expect to be able to control two normal derivatives of the velocity uniformly in $\al$. A similar difficulty is present in the vanishing viscosity limit, and the idea to use conormal spaces to deal with it is originally due to Masmoudi and Rousset, see \cite{masmoudi_uniform_2012-1}.

We begin with the \textit{a priori} estimates required for Theorem \ref{uniformtime}.
\begin{proposition}\label{apriori1}
Let $u$ be a solution of \eqref{maineq} with boundary conditions \eqref{navier} and let $m\geq5$. There exists two constants $\al_0=\al_0(\Om,m)$ and $C=C(\Om,m)$ such that for all $0<\al<\al_0$ the following \textit{a priori} estimates hold true:
\begin{equation*}
L(t)\leq C(\nl2{u_0}+\|\oma_0\|_{H^{m-1}_{co}\cap W^{1,\infty}_{co}})+C\int_0^tL^2(s)\,ds
\end{equation*}
where
\begin{equation*}
L=\|u\|_{X^m\cap W^{1,\infty}}+\|\oma\|_{H^{m-1}_{co}\cap W^{1,\infty}_{co}}.
\end{equation*}
\end{proposition}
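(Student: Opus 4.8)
The plan is to close a Gr\"onwall-type inequality for $L$ by running transport estimates on the vorticity $\oma$, via the vorticity equation \eqref{eqvort}, in the two conormal scales $H^{m-1}_{co}$ and $W^{1,\infty}_{co}$, and then recovering $u$ in $X^m\cap W^{1,\infty}$ from $\oma$ by means of the $\al$-uniform elliptic bounds of Section 4. What makes conormal estimates the right tool is that the transport operator is essentially tangential: by the canonical decomposition \eqref{Z}, $u\cdot\nabla=\sum_{i=1}^7\tiu_i\,\ZZ i$, a first-order conormal operator whose coefficients $\tiu_i$ are controlled by $u$ through Lemma \ref{lemadecomp}. In particular, since $u$ is divergence free and tangent to $\bdry$, for any tangential multi-index $\be$ the leading term $\int_\Om(u\cdot\nabla\Z^\be\oma)\cdot\Z^\be\oma=-\frac12\int_\Om(\dive u)|\Z^\be\oma|^2+\frac12\int_\bdry(u\cdot n)|\Z^\be\oma|^2$ vanishes, with no boundary condition imposed on $\oma$. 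Throughout I would use freely the embeddings $X^k\subset W^{k-2,\infty}_{co}$ for $k\ge2$ — which follow from $\Z^\gamma f\in X^2\subset L^\infty$ for $f\in X^k$, $|\gamma|\le k-2$, by Lemma \ref{gagliardo}(\ref{imbedding}) — and the fact that $\nabla$ maps $X^k$ boundedly into $H^{k-1}_{co}$.

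First, the $H^{m-1}_{co}$ estimate. Applying $\Z^\be$ with $|\be|\le m-1$ to \eqref{eqvort}, pairing in $L^2(\Om)$ with $\Z^\be\oma$, summing over $\be$, and discarding the vanishing leading term, one is left with the commutator $[\Z^\be,u\cdot\nabla]\oma$ and the stretching term $\Z^\be(\oma\cdot\nabla u)$, each paired with $\Z^\be\oma$. Writing $u\cdot\nabla=\sum_i\tiu_i\ZZ i$ and using the conormal product and commutator inequality \eqref{gn1} together with Lemma \ref{lemadecomp}, the standard distribution-of-derivatives argument gives $\|[\Z^\be,u\cdot\nabla]\oma\|_{L^2}\le C\sum_i\bigl(\lipco{\tiu_i}\hco{m-1}\oma+\hco{m-1}{\tiu_i}\lipco\oma\bigr)\le C\,\xm mu\,\bigl(\hco{m-1}\oma+\lipco\oma\bigr)\le CL^2$; the key point is that a commutator never places more than $m-1$ conormal derivatives on $\oma$. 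The stretching term is treated the same way, using $\|\nabla u\|_{H^{m-1}_{co}}\le\xm mu$ and $\nl\infty{\nabla u}\le\|u\|_{W^{1,\infty}}$. This yields $\dert\hco{m-1}\oma\le CL^2$.

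Next, the $W^{1,\infty}_{co}$ estimate. Applying a single $\Z$ to \eqref{eqvort} and propagating the $L^\infty$ norm along the flow of $u$ — which preserves $\overline\Om$ and $\bdry$ because $u\cdot n=0$ — yields $\dert\lipco\oma\le C\bigl(\|[\Z,u\cdot\nabla]\oma\|_{L^\infty}+\|\Z(\oma\cdot\nabla u)\|_{L^\infty}\bigr)$. The commutator is $\le C\,\uco2u\,\lipco\oma\le CL^2$, and the stretching term requires $\|\nabla u\|_{W^{1,\infty}_{co}}$, bounded by Lemma \ref{lemalip} by $C(\lipco\om+\uco2u)$, while by Lemma \ref{lemalipco}, $\lipco\om\le C(\lipco\oma+\uco2u+\sqrt\al\uco3u+\al\uco4u)$. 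The first three terms are $\le C(\lipco\oma+\xm5u)\le CL$; for the last, $\al\uco4u\le C\al\xm6u$, and by Lemma \ref{lemmareg} followed by the $\al$-weighted elliptic bound \eqref{basic} applied at index $4$ (legitimate since $\oma\in H^4_{co}$, using $m\ge5$), $\al\xm6u\le C\al(\nl2u+\hco5\om)\le C\sqrt\al(\nl2u+\sqrt\al\,\xm5\om)\le C(\nl2u+\hco4\oma)\le CL$. Hence $\dert\lipco\oma\le CL^2$. Finally, the standard $H^1_\al$ energy estimate for \eqref{maineq} with Navier boundary conditions — test with $u$, integrate by parts, and bound the boundary term using \eqref{navier} and Lemma \ref{ident} — gives $\dert(\nl2u^2+\al\nl2{\nabla u}^2)\le C(\nl2u^2+\al\nl2{\nabla u}^2)\le CL^2$, so, after using Proposition \ref{elliptic} at $m=0$ on the initial data, $\nl2{u(t)}\le C(\nl2{u_0}+\nl2{\oma_0})+C\int_0^tL^2$.

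The last step — the elliptic closure — is also where the genuine difficulty sits. Proposition \ref{elliptic} at index $m-1$ gives $\xm mu\le C(\nl2u+\hco{m-1}\oma)$; Lemma \ref{gagliardo}(\ref{imbedding}) gives $\nl\infty u\le C\xm2u$; and Lemmas \ref{lemalip} and \ref{lemalipco}, together with the bound on $\al\uco4u$ obtained above, give $\nl\infty{\nabla u}\le C(\nl2u+\hco{m-1}\oma+\lipco\oma)$. All of these are pointwise in $t$ and uniform in $\al$, so $L(t)\le C\bigl(\nl2{u(t)}+\hco{m-1}{\oma(t)}+\lipco{\oma(t)}\bigr)$; inserting the three differential inequalities and the data bounds above and integrating in time yields the asserted estimate. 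The main obstacle is precisely the uniformity in $\al$: neither $\om=\curl u$ nor the Lipschitz norms of $u$ are controlled by $\oma$ without a loss of derivatives, so one is forced to exploit the $\al$-weighted gain built into \eqref{basic} — the factor $\sqrt\al$ in front of $\xm{m+1}\om$ and the factor $\al$ in front of $\hco m{\Delta\om}$ — in order to pay for the top-order contributions. The borderline term is $\al\uco4u$ coming from Lemma \ref{lemalipco}, which costs two conormal derivatives beyond $X^m$; paying for it needs $\oma\in H^4_{co}$, which is exactly why the hypothesis $m\ge5$ appears.
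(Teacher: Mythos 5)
Your proposal is correct and follows essentially the same route as the paper's proof: transport estimates for $\oma$ in $H^{m-1}_{co}$ and $W^{1,\infty}_{co}$ using the canonical decomposition \eqref{Z} (with the leading term vanishing by tangency and incompressibility, and commutators handled via \eqref{gn1} and Lemma \ref{lemadecomp}), closed by the elliptic bounds of Proposition \ref{elliptic} and Lemmas \ref{lemalip}--\ref{lemalipco}, with the borderline term $\sqrt\al\uco4u$ absorbed exactly as in the paper's relation \eqref{m5}, which is where $m\geq5$ enters. The only cosmetic difference is that the paper tracks the functional $\ym mu$ built on the exactly conserved energy $\nl2u^2+2\al\nl2{D(u)}^2$, whereas you run a Gr\"onwall bound on the $L^2$ energy separately; both close the argument.
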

\begin{proof}
We start by making $H^{m-1}_{co}$ estimates on the equation verified by the vorticity given in \eqref{eqvort}. We apply $\partial_Z^\be$ to \eqref{eqvort}, multiply by $\partial_Z^\be\oma$, we sum over $|\be|\leq m-1$ and we integrate in space to obtain that
\begin{equation*}
\frac12 \partial_t\hco{m-1}{\oma}^2
=-\sum_{|\be|\leq {m-1}}\int_\Omega \partial_Z^\be(u\cdot\nabla\oma)\partial_Z^\be\oma+ \sum_{|\be|\leq {m-1}}\int_\Omega \partial_Z^\be(\oma\cdot\nabla u)\partial_Z^\be\oma
\equiv I_1+I_2.
\end{equation*}
 We first bound $I_2$ by using Lemma \ref{gagliardo}, item a):
 \begin{equation*}
|I_2|\leq C\| \oma\cdot\nabla u\|_{H^{m-1}_{co}} \| \oma\|_{H^{m-1}_{co}}
\leq C \| \oma\|_{H^{m-1}_{co}} (\| \oma\|_{H^{m-1}_{co}}\nl\infty{\nabla u}+ \| \nabla u\|_{H^{m-1}_{co}}\nl\infty{\oma})
 \end{equation*}

To bound $I_1$, we use the decomposition from relation \eqref{Z}, $u=\sum\limits_{i=1}^7 \tiu_i Z_i$, and write
\begin{align*}
-I_1&= \sum_{|\be|\leq {m-1}}\int_\Omega \partial_Z^\be(u\cdot\nabla\oma)\partial_Z^\be\oma\\
&= \sum_{|\be|\leq {m-1}}\sum_{i=1}^7 \int_\Omega \partial_Z^\be(\tiu_i\partial_{Z_i}\oma)\partial_Z^\be\oma\\
&= \sum_{|\be|\leq {m-1}}\sum_{i=1}^7 \int_\Omega \tiu_i\partial_Z^\be\partial_{Z_i}\oma\partial_Z^\be\oma+I_{11}\\
&= \sum_{|\be|\leq {m-1}}\sum_{i=1}^7 \int_\Omega \tiu_i\partial_{Z_i}\partial_Z^\be\oma\partial_Z^\be\oma
+\sum_{|\be|\leq {m-1}}\sum_{i=1}^7 \int_\Omega \tiu_i[\partial_Z^\be,\partial_{Z_i}]\oma\partial_Z^\be\oma
+I_{11}\\
&= \sum_{|\be|\leq {m-1}}\int_\Omega u\cdot\nabla \partial_Z^\be\oma\partial_Z^\be\oma
+\sum_{|\be|\leq {m-1}}\sum_{i=1}^7 \int_\Omega \tiu_i[\partial_Z^\be,\partial_{Z_i}]\oma\partial_Z^\be\oma
+I_{11}\\
&\equiv I_{12}+I_{13}+I_{11},
\end{align*}
where
\begin{equation*}
I_{11}= \sum_{|\be|\leq {m-1}}\sum_{i=1}^7 \int_\Omega \bigl[\partial_Z^\be(\tiu_i\partial_{Z_i}\oma)-\tiu_i\partial_Z^\be\partial_{Z_i}\oma\bigr]\partial_Z^\be\oma.
\end{equation*}

Now, an integration by parts using that $u$ is divergence free and tangent to the boundary immediately yields that $I_{12}=0$. Next, we observe that $[\partial_Z^\be,\partial_{Z_i}]$ is a combination of tangential derivatives of order $\leq {m-1}$ so we can bound
\begin{equation*}
|I_{13}|\leq  \sum_{|\be|\leq {m-1}}\sum_{i=1}^7 \nl\infty{\tiu_i}\nl2{[\partial_Z^\be,\partial_{Z_i}]\oma}\nl2{\partial_Z^\be\oma}
\leq C\lipco u\hco{m-1}{\oma}^2
\end{equation*}
where we used Lemma \ref{lemadecomp} to bound $\nl\infty{\tiu_i}\leq C\lipco u$.

We estimate now $I_{11}$.
We remark that it can be written as a sum of terms of the form
\begin{equation*}
\int_\Omega \partial_Z^{\gamma_1} \tiu\  \partial_Z^{\gamma_2}\oma \, \partial_Z^\be\oma \quad\text{with }1\leq|\beta|\leq {m-1},\  |\gamma_1|+|\gamma_2|\leq {m}, \ |\gamma_1|,|\gamma_2|\geq1.
\end{equation*}
We now estimate  a term of the form given above. Since $\gamma_1\neq0$ and $\gamma_2\neq0$,  we can write $\partial_Z^{\gamma_1}\tiu=\partial_Z^{\gamma_3}\partial_{Z_j}\tiu$ and $\partial_Z^{\gamma_2}\oma=\partial_Z^{\gamma_4}\partial_{Z_k}\oma$ for some $j$ and $k$. Clearly $|\gamma_3|+|\gamma_4|\leq {m-2}$. Using Lemma \ref{gagliardo}, item a) with $k={m-2}$ and Lemma \ref{lemadecomp} we observe that we can bound
\begin{align*}
\Bigl|\int_\Omega \partial_Z^{\gamma_1} \tiu\  \partial_Z^{\gamma_2}\oma \, \partial_Z^\be\oma\Bigr|
&\leq \nl2{\partial_Z^{\gamma_3}\partial_{Z_j} \tiu\  \partial_Z^{\gamma_4}\partial_{Z_k}\oma}\nl2{\partial_Z^\be\oma}  \\
&\leq C(\nl\infty{\partial_{Z_j} \tiu}\hco{m-2}{\partial_{Z_k}\oma}+\hco{m-2}{\partial_{Z_j} \tiu}\nl\infty{\partial_{Z_k}\oma})\hco{m-1}\oma\\
&\leq C(\uco2u\hco{m-1}\oma+\hco{m}u\lipco\oma)\hco{m-1}\oma
\end{align*}

We obtain from the previous relations the following differential inequality for the $H^{m-1}_{co}$ norm of $\oma$:
\begin{equation}\label{estoma}
\partial_t \hco{m-1}{\oma}^2
\leq C \| \oma\|_{H^{m-1}_{co}}^2(\nl\infty{\nabla u}+\uco2u)
+ C \| \oma\|_{H^{m-1}_{co}}\xm mu\lipco\oma.
\end{equation}

We recall now that the quantity $\nl2u^2+2\al\nl2{D (u)}^2$ is conserved. Let us introduce the following norm:
\begin{equation*}
\|u\|^2_{Y^m}\equiv\nl2u^2+2\al\nl2{D(u)}^2+\hco{m-1}{\oma}^2.
\end{equation*}
Then from Proposition \ref{elliptic} we have that $\xm mu\leq C\ym mu$. From \eqref{estoma} we infer that
\begin{equation*}
\partial_t \ym mu^2\leq C\ym mu^2(\nl\infty{\nabla u}+\uco2u+\lipco\oma).
\end{equation*}

From Lemma \ref{lemalip}  we deduce that
\begin{equation*}
 \nl\infty{\nabla u}+\uco2u\leq C(\nl\infty\om+\uco2u)
\end{equation*}
From the maximum principle applied to the operator $1-\al\Delta$ and using relation \eqref{boundombord} we deduce that
\begin{equation*}
  \nl\infty\om
  \leq \nl\infty{\oma}+\|\om\|_{L^\infty(\partial\Om)}
  \leq \nl\infty{\oma}+C\lipco u
\end{equation*}
so that
\begin{equation}\label{ominf}
 \nl\infty{\nabla u}+\uco2u\leq C(\nl\infty\oma+\uco2u)\leq C(\nl\infty\oma+\xm4u)\leq C(\nl\infty\oma+\ym mu)
\end{equation}
where we used the embedding $X^2\subset L^\infty$ proved in Lemma \ref{gagliardo}, item \ref{imbedding}). We conclude that
\begin{equation}\label{boundym}
\partial_t \ym mu\leq C\ym mu^2+C\ym mu\lipco\oma.
\end{equation}

It remains to estimate the $W^{1,\infty}_{co}$ norm of $\oma$. To do that, we use the equation for $\oma$ given in \eqref{eqvort}. We view it as a transport equation with source term $\oma\cdot\nabla u$. We have that
\begin{equation}\label{norminfoma}
\begin{aligned}
\nl\infty{\oma(t)}
&\leq  \nl\infty{\oma_0}+\int_0^t\nl\infty{\oma(s)}\nl\infty{\nabla u(s)}ds \\
&\leq  \nl\infty{\oma_0}+C\int_0^t\nl\infty{\oma(s)}(\nl\infty{\oma(s)}+\ym m{u(s)})ds \\
\end{aligned}
\end{equation}

Next, we apply a tangential derivative $\Z$ to \eqref{eqvort} and recall the decomposition $u=\sum\limits_{i=1}^7 \tiu_i Z_i$ to obtain
\begin{equation*}
  \dt\Z\oma+\Z(\sum_{i=1}^7  \tiu_i \partial_{Z_i}\oma)-\Z(\oma\cdot\nabla u)=0
\end{equation*}
so
\begin{equation*}
  \dt\Z\oma+u\cdot\nabla\Z\oma=-\sum_{i=1}^7  \Z\tiu_i \partial_{Z_i}\oma-\sum_{i=1}^7 \tiu_i [\Z,\partial_{Z_i}]\oma +\Z(\oma\cdot\nabla u).
\end{equation*}
We infer that
\begin{equation*}
\nl\infty{\Z\oma(t)} \leq  \nl\infty{\Z\oma_0}+\int_0^t\nl\infty{\sum_{i=1}^7  \Z\tiu_i \partial_{Z_i}\oma+\sum_{i=1}^7 \tiu_i [\Z,\partial_{Z_i}]\oma -\Z(\oma\cdot\nabla u)}.
\end{equation*}
Summing over all $Z$ and adding to  \eqref{norminfoma} we get the following bound for the $W^{1,\infty}_{co}$ norm of $\oma$:
\begin{multline*}
\lipco{\oma(t)}\leq \lipco{\oma_0} +C\int_0^t\nl\infty{\oma(s)}(\nl\infty{\oma(s)}+\ym m{u(s)})ds\\
+ C\int_0^t(\lipco{\tiu(s)}+\lipco{\nabla u(s)})\lipco{\oma(s)}\,ds.
\end{multline*}
Next, we estimate $\lipco{\tiu}\leq C\uco2u\leq C\xm4u\leq C\ym mu$. It remains to bound $\lipco{\nabla u}$. To do so, we use Lemma \ref{lemalip} and Lemma \ref{lemalipco} to write
\begin{align*}
\lipco{\nabla u}
&\leq C(\lipco\om+\uco2u)\\
&\leq  C(\lipco\oma+ \uco2u+\sqrt\al\uco3u+\al\uco4u) \\
&\leq C(\lipco\oma+ \ym mu+\sqrt\al\uco4u).
\end{align*}
The last term on the right-hand side can be estimated using Lemma \ref{lemmareg}, the relation \eqref{basic} and the embedding $X^2\subset L^\infty$:
\begin{equation}\label{m5}
\sqrt\al\uco4u\leq C\sqrt\al\xm6u\leq C\sqrt\al(\nl2u+\hco5\om)  \leq C(\nl2u+\hco4\oma)\leq C\ym mu.
\end{equation}
where we used that $m\geq5$. We conclude that
\begin{equation*}
\lipco{\oma(t)}\leq \lipco{\oma_0}
+ C\int_0^t(\lipco\oma+\ym mu)\lipco{\oma(s)}\,ds.
\end{equation*}
Combining the above relation with \eqref{boundym} integrated in time implies  that the quantity
\begin{equation*}
  F(t)=\ym m{u(t)}+\lipco{\oma(t)}
\end{equation*}
verifies the following relation
\begin{align*}
F(t)&\leq C\nl2{u_0}+C\sqrt\al\nl2{\nabla u_0}+C\hco{m-1}{\oma_0}+C\lipco{\oma_0}+C\int_0^t F^2(s)\,ds\\
&\leq C\nl2{u_0}+C\hco{m-1}{\oma_0}+C\lipco{\oma_0}+C\int_0^t F^2(s)\,ds
\end{align*}
where we also used Proposition \ref{elliptic}.
We finally deduce from Proposition \ref{elliptic} and from relation \eqref{ominf} that $L\leq CF\leq CL$. This completes the proof of Proposition \ref{apriori1}.
\end{proof}

We observe now that the previous \textit{a priori} estimates go through for solutions of the Euler equation, even with a small improvement.
\begin{proposition}\label{apriori2}
Let $u$ be a solution of the incompressible Euler equations \eqref{eulereq} with boundary conditions \eqref{tangent}. There exists a constant  $C=C(\Om)$ such that the following \textit{a priori} estimates hold true:
\begin{equation*}
M(t)\leq C(\nl2{u_0}+\|\om_0\|_{H^{3}_{co}\cap W^{1,\infty}_{co}})+C\int_0^tM^2(s)\,ds
\end{equation*}
where
\begin{equation*}
M=\|u\|_{X^4\cap W^{1,\infty}}+\|\om\|_{ W^{1,\infty}_{co}}.
\end{equation*}
\end{proposition}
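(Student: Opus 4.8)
The plan is to rerun the proof of Proposition~\ref{apriori1} with $\al=0$ and $m=4$. For the Euler equations the auxiliary vorticity $\oma$ coincides with $\om=\curl u$, so \eqref{eqvort} becomes $\dt\om+u\cdot\nabla\om-\om\cdot\nabla u=0$ and every term carrying a positive power of $\al$ disappears; this is precisely what allows the regularity threshold to be lowered from $m\geq5$ to $m=4$. First I would run the $H^3_{co}$ energy estimate on the vorticity: applying $\partial_Z^\be$ to \eqref{eqvort}, multiplying by $\partial_Z^\be\om$, summing over $|\be|\leq3$ and integrating over $\Om$, the commutator computation is word for word that of Proposition~\ref{apriori1}. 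The analogue of the term $I_{12}$ still vanishes after one integration by parts --- this uses only that $u$ is divergence free and tangent to the boundary, which is exactly what \eqref{tangent} provides --- and the worst term $I_{11}$ is controlled by Lemma~\ref{gagliardo}(a) with $k=m-2=2$ together with Lemma~\ref{lemadecomp}. This yields $\partial_t\hco3{\om}^2\leq C\hco3{\om}^2(\nl\infty{\nabla u}+\uco2u)+C\hco3{\om}\,\xm4u\,\lipco\om$.

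Next I would close this estimate. For Euler, $\nl2{u(t)}$ is conserved, so setting $\|u\|_{Y^4}^2:=\nl2u^2+\hco3{\om}^2$ (the $\al=0$ specialization of the norm in Proposition~\ref{apriori1}) I would invoke the elliptic estimate of Lemma~\ref{lemmareg}, which holds under the sole hypothesis of tangency, to get $\xm4u\leq C\ym4u$; this replaces the $\al$-dependent Proposition~\ref{elliptic}. Then Lemma~\ref{lemalip} (with $k=0$) combined with the embedding $X^2\subset L^\infty$ from Lemma~\ref{gagliardo}(b), which gives $\uco2u\leq C\xm4u$, yields $\nl\infty{\nabla u}+\uco2u\leq C(\nl\infty\om+\ym4u)\leq C(\lipco\om+\ym4u)$. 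Feeding these into the previous inequality gives $\partial_t\ym4u\leq C\ym4u^2+C\ym4u\,\lipco\om$.

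The $W^{1,\infty}_{co}$ bound on $\om$ I would obtain by viewing \eqref{eqvort} with $\al=0$ as a transport equation with source $\om\cdot\nabla u$: applying tangential derivatives $\Z$, using the decomposition \eqref{Z}, Lemma~\ref{lemadecomp} (so that $\lipco{\tiu_i}\leq C\uco2u\leq C\ym4u$), and Lemma~\ref{lemalip} to get $\lipco{\nabla u}\leq C(\lipco\om+\uco2u)\leq C(\lipco\om+\ym4u)$, one reaches $\lipco{\om(t)}\leq\lipco{\om_0}+C\int_0^t(\lipco{\om}+\ym4u)\lipco{\om}\,ds$. The point to stress here is that no term of the form $\sqrt\al\uco4u$ ever appears, so that Lemmas~\ref{maxprinc} and \ref{lemalipco} are not needed and $m=4$ suffices; in Proposition~\ref{apriori1} it was exactly this term, through \eqref{m5}, that forced $m\geq5$. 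Adding the time-integrated version of the energy inequality to the one just derived, the quantity $F(t):=\ym4{u(t)}+\lipco{\om(t)}$ satisfies $F(t)\leq C(\nl2{u_0}+\hco3{\om_0}+\lipco{\om_0})+C\int_0^tF^2(s)\,ds$; since $M(t)\simeq F(t)$ by Lemmas~\ref{lemmareg} and \ref{lemalip} and the bound $\nl2u\leq C\nl\infty u$ valid on the bounded domain $\Om$, the assertion follows.

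The main obstacle is, honestly, that there is none that is genuinely new: the whole argument is a transcription of the proof of Proposition~\ref{apriori1}. The points that do require care are (i) systematically replacing Proposition~\ref{elliptic} by Lemma~\ref{lemmareg}, which is legitimate because only tangency --- not the full Navier condition --- is available; (ii) checking that the commutator estimates in the $H^3_{co}$ step still close at $m=4$, which they do because they only use Lemma~\ref{gagliardo}(a) with the nonnegative index $m-2=2$; and (iii) keeping track of the disappearance of the term $\sqrt\al\uco4u$, which is the ``small improvement'' making an $X^4$ existence theory for the Euler equations possible.
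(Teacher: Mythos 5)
Your proposal is correct and follows exactly the paper's own route: the paper proves Proposition~\ref{apriori2} precisely by observing that the argument of Proposition~\ref{apriori1} goes through at $\al=0$ with $m=4$, since the Navier condition enters only through Proposition~\ref{elliptic} (which degenerates to Lemma~\ref{lemmareg}) and Lemma~\ref{lemalipco} (trivial at $\al=0$), and the hypothesis $m\geq5$ is used only in \eqref{m5}, whose left-hand side vanishes when $\al=0$. You have simply written out the transcription that the paper leaves implicit.
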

\begin{proof}
We observe that even though now we don't assume $u$ to verify the Navier boundary conditions, the \textit{a priori} estimates proved in Proposition \ref{apriori1} remain valid when $\al=0$ too. Indeed, the only results from the previous sections that use the Navier boundary conditions are Proposition \ref{elliptic} and Lemma \ref{lemalipco}. But when $\al=0$ the conclusion of Lemma  \ref{lemalipco} is trivially true without requiring any boundary condition at all, and  the conclusion of Proposition \ref{elliptic} becomes the same as the conclusion of Lemma \ref{lemmareg}.

Moreover, if we go back to the proof of Proposition \ref{apriori1}, it is easy to see that the hypothesis $m\geq 5$ was used only in relation \eqref{m5}. In the rest of the proof the hypothesis $m\geq4$ is sufficient. But when $\al=0$, the relation \eqref{m5} is not required in the proof (and moreover it is trivially verified because the left-hand side vanishes). So in the case $\al=0$, the \textit{a priori} estimates proved in Proposition \ref{apriori1} are valid for $m=4$ and without need to assume the Navier boundary conditions. This completes the proof.
\end{proof}

\section{Approximation procedure and end of proofs}

In this section we construct an approximation procedure that will allow us to turn the \textit{a priori} estimates from the previous section into  a rigorous result of existence of solutions. We need to approximate the initial data by a sequence of smooth vector fields which belong to and are bounded in the same function spaces as $u_0$, that is, in conormal spaces. Density results for conormal spaces are known, see for example \cite{nishitani_regularity_1997,nishitani_regularity_2000}. However, these density results are false within the class of general divergence free vector fields. Indeed, it is proved in \cite{nishitani_regularity_1997,nishitani_regularity_2000} that $C^\infty_0$ is dense in $H^m_{co}$. A similar density result cannot be true for divergence free vector fields because a divergence free vector field has a normal trace at the boundary. If that normal trace is not vanishing, then no sequence of $C^\infty_0$ divergence free vector fields can converge to this vector field. In our case, a new approximation procedure must be invented and it is not at all obvious how to proceed.

Let $\PP$ be the Leray projector, \textit{i.e.} the $L^2$ orthogonal projection on the space of divergence free vector fields tangent to the boundary. The idea of our procedure of approximation of a divergence free vector field $\om$ in conormal spaces is given in the following lemma. It consists in observing that $\om -\PP\om$ belongs to the same Sobolev space as $\om$ but without the conormal subscript. So $\om -\PP\om$ can approximated using standard density results for the classical Sobolev spaces. As for $\PP u$, since it is tangent to the boundary the obstruction mentioned above disappears, and it is not hard to approximate it with smooth divergence free vector fields in conormal spaces.
\begin{lemma}\label{leray}
Let $m\geq2$ and $\om \in H^{m-1}_{co}(\Om)$ be a divergence free vector field. Then $\om -\PP \om \in H^{m-1}(\Om)$. Suppose in addition that $\om\in W^{1,\infty}_{co}$, that $m\geq4$ and that there exists some  $\psi$ such that $\om=\curl\psi$. Then there exist two vector fields $\psi_1$ and $\psi_2$ such that:
  \begin{gather}
    \om =\curl(\psi_1+\psi_2)\quad\text{and}\quad \psi_1+\psi_2=\psi-\nabla p\quad\text{for some }p,\label{f1}\\
\psi_1\in X^{m}\cap W^{2,\infty}_{co},\quad  \nabla\psi_1\in W^{1,\infty}_{co},\quad \dive\psi_1=0,\quad \psi_1\times n=0\text{ on }\partial\Om,\label{f2}\\
\psi_2\in H^m(\Om).\notag
  \end{gather}

\end{lemma}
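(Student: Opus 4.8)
The plan is to split $\om$ using the Leray projector and treat each piece separately. First I would set $\psi_2$-type remainders to absorb the non-conormal part and keep $\psi_1$ for the tangential, conormally smooth part.

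\textbf{Step 1: the decomposition $\om-\PP\om \in H^{m-1}$.} Write $\om - \PP\om = \nabla q$ where $q$ solves the Neumann problem $\Delta q = \dive \om = 0$ in $\Om$ with $\partial_n q = \om\cdot n$ on $\partial\Om$ (using that $\om$ is divergence free). Since $\om \in H^{m-1}_{co}$ is divergence free and tangent-trace data $\om\cdot n$ is controlled by $\|\om\|_{H^{m-1}_{co}}$ via the trace theorem on $H^1_{co}$-type spaces (as in the proof of Lemma \ref{lemmareg}), elliptic regularity for the Neumann Laplacian gives $q \in H^{m}(\Om)$, hence $\nabla q \in H^{m-1}(\Om)$. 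The key point is that $q$ itself solves a genuine (non-conormal) elliptic problem, so one gains full Sobolev regularity, not merely conormal regularity; the conormal regularity of $\om$ enters only through the boundary data. This establishes $\om - \PP\om \in H^{m-1}(\Om)$.

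\textbf{Step 2: constructing $\psi_1$ and $\psi_2$.} Now suppose $\om = \curl\psi$ and $\om \in H^{m-1}_{co}\cap W^{1,\infty}_{co}$ with $m\geq 4$. Apply Step 1: $\om - \PP\om = \nabla q \in H^{m-1}(\Om)$. Since $\curl(\om-\PP\om)=\curl\nabla q=0$, write $\om - \PP\om = \curl\psi_2$ for some $\psi_2$; by standard elliptic theory on the smooth bounded domain $\Om$ one can choose $\psi_2 \in H^{m}(\Om)$ (e.g. via the vector potential construction, solving $-\Delta\psi_2 = \curl(\om-\PP\om)$-type systems with suitable gauge and boundary conditions, or directly since $\om-\PP\om$ is a gradient one may even take $\psi_2=0$ — but keeping the construction uniform is cleaner). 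For the tangential piece, $\PP\om$ is divergence free and tangent to the boundary, so there is a vector potential: solve for $\psi_1$ with $\curl\psi_1 = \PP\om$, $\dive\psi_1 = 0$, and $\psi_1\times n = 0$ on $\partial\Om$ (this is a well-posed div-curl system on a smooth bounded domain, solvable up to the finite-dimensional harmonic obstruction which is handled by adjusting $\psi_2$). The crucial regularity claim $\psi_1 \in X^m \cap W^{2,\infty}_{co}$ with $\nabla\psi_1 \in W^{1,\infty}_{co}$ then follows from the conormal elliptic estimates: applying Lemma \ref{lemmareg} to $\psi_1$ gives $\|\psi_1\|_{X^m} \leq C(\|\psi_1\|_{L^2} + \|\PP\om\|_{H^{m-1}_{co}})$, and $\|\PP\om\|_{H^{m-1}_{co}} \leq \|\om\|_{H^{m-1}_{co}} + \|\om-\PP\om\|_{H^{m-1}} < \infty$; the $W^{2,\infty}_{co}$ and $\nabla\psi_1 \in W^{1,\infty}_{co}$ bounds follow from the conormal Sobolev embedding $X^2 \subset L^\infty$ (Lemma \ref{gagliardo}\ref{imbedding}) applied to tangential derivatives, together with Lemma \ref{lemalip} relating $\nabla\psi_1$ to $\curl\psi_1 = \PP\om$ and tangential derivatives of $\psi_1$, since $m\geq 4$ gives enough room. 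Finally, $\om = \curl\psi_1 + \curl\psi_2 = \curl(\psi_1+\psi_2)$, and since $\curl(\psi_1+\psi_2) = \om = \curl\psi$, the difference $\psi_1+\psi_2 - \psi$ is curl-free on $\Om$; on a domain with possibly nontrivial topology one needs the harmonic-field freedom absorbed above so that this difference is exactly $-\nabla p$ for some scalar $p$, giving \eqref{f1}.

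\textbf{Main obstacle.} The delicate point is that $\psi_1$ must have \emph{conormal} regularity of order $m$ (including one non-tangential derivative, i.e. $X^m$, plus $W^{2,\infty}_{co}$ control), and this must be derived solely from $\PP\om \in H^{m-1}_{co}\cap W^{1,\infty}_{co}$. The $L^2$-based estimate is exactly Lemma \ref{lemmareg} applied with $\psi_1$ in place of $u$ (legitimate since $\psi_1$ is divergence free and tangent to the boundary — indeed $\psi_1\times n = 0$ is even stronger). The genuinely new work is the $W^{1,\infty}_{co}$/$W^{2,\infty}_{co}$ bound on $\nabla\psi_1$: here one commutes tangential vector fields $Z_i$ through the div-curl system, uses Lemma \ref{lemalip} to trade a normal derivative for $\curl\psi_1$ plus tangential derivatives, and then invokes the embedding $X^2\subset L^\infty$ to pass from $H^m_{co}$-control ($m\geq 4$, so $m-2\geq 2$) to $W^{1,\infty}_{co}$-control. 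One must also verify that the harmonic-field and gauge ambiguities in the div-curl system can be consistently parcelled into $\psi_2 \in H^m(\Om)$, so that \eqref{f1} holds with a bona fide scalar potential $p$; this is where the smoothness of $\Om$ and finite-dimensionality of the cohomology are used.
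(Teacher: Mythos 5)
Your overall architecture matches the paper's: split $\om$ with the Leray projector, solve a Neumann problem to show $\om-\PP\om=\nabla q\in H^{m-1}$, build a vector potential for $\PP\om$ with boundary condition $\psi_1\times n=0$, and obtain the conormal regularity of $\psi_1$ from an analogue of Lemma \ref{lemmareg} together with the embedding $X^2\subset L^\infty$ and Lemma \ref{lemalip}. However, there is a concrete error at the crucial regularity step. You justify applying Lemma \ref{lemmareg} to $\psi_1$ by saying it is ``divergence free and tangent to the boundary --- indeed $\psi_1\times n=0$ is even stronger.'' This is backwards: $\psi_1\times n=0$ means $\psi_1$ is \emph{normal} to the boundary, not tangent ($\psi_1\cdot n$ does not vanish in general), so the hypothesis of Lemma \ref{lemmareg} fails. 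That lemma rests on the estimate of \cite[Proposition 1.4]{foias_remarques_1978}, which requires control of $v\cdot n$ on $\partial\Om$; for $\psi_1$ one instead controls $v\times n$ and must substitute the corresponding elliptic estimate for normal vector fields, as the paper does by invoking \cite[Corollary 2.15]{amrouche_vector_1998}. The conclusion $\psi_1\in X^m$ is correct, but not by the argument you give.

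Two further gaps. First, your construction of $\psi_2$: the inference ``since $\curl(\om-\PP\om)=0$, write $\om-\PP\om=\curl\psi_2$'' is a non sequitur (what matters is that $\nabla q$ is divergence free with zero flux through each component of $\partial\Om$), and the aside that ``one may even take $\psi_2=0$'' is false, since then $\curl(\psi_1+\psi_2)=\PP\om\neq\om$ in general. Second, with a $\psi_2$ built this way, the identity $\psi_1+\psi_2=\psi-\nabla p$ requires showing that a curl-free field is an exact gradient, which on a multiply connected domain needs the harmonic corrections you only gesture at. The paper sidesteps both issues by \emph{defining} $\psi_2:=\PP(\psi-\psi_1)$, so that $\psi-\psi_1-\psi_2=(I-\PP)(\psi-\psi_1)=\nabla p$ holds automatically, and then verifying a posteriori that $\curl\psi_2=\om-\PP\om+Y\in H^{m-1}$ (with $Y$ the smooth harmonic field coming from the Borchers--Sohr decomposition of $\PP\om$), whence $\psi_2\in H^m$. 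Finally, in Step 1 you should make explicit how the boundary regularity of $\om\cdot n$ is obtained: traces are not controlled by conormal norms alone, and the paper first proves $\om\cdot n\in X^{m-1}$ by using $\dive\om=0$ to express $\partial_n(\om\cdot n)$ through tangential derivatives of $\om$, and only then applies the trace theorem to reach $\om\cdot n\in H^{m-3/2}(\partial\Om)$.
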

\begin{proof}
We show first that $\om \cdot n\in X^{m-1}$. Because $H^{m-1}=H^{m-1}_{co}$ in the interior of $\Om$, it suffices to show it in $\Om_\delta$. But in that region we have that $\|n\|=1$, so
\begin{equation*}
\nabla=-n\times(n\times\nabla)+n\partial_n.
\end{equation*}
We infer that
\begin{equation*}
-[n\times(n\times\nabla)]\cdot \om   +n\cdot\partial_n\om =\dive \om =0
\end{equation*}
Clearly $n\cdot\partial_n\om =\partial_n(\om \cdot n)-\partial_nn\cdot \om $ so
\begin{equation*}
\partial_n(\om \cdot n)= \partial_nn\cdot \om  +[n\times(n\times\nabla)]\cdot \om .
\end{equation*}
The right-hand side belongs to $H^{m-2}_{co}$. We infer that  $\nabla(\om \cdot n)\in H^{m-2}_{co}$ so $\om \cdot n\in X^{m-1}$.

Now, let $\Z^{m-2}$ be a tangential derivative of order $\leq m-2$. Because $\om \cdot n\in X^{m-1}$ we have that $\Z^{m-2}(\om \cdot n)\in H^1(\Om)$ so $\Z^{m-2}(\om \cdot n)\bigl|_{\partial\Om}\in H^{\frac12}(\partial\Om)$. We conclude that $\om \cdot n\bigl|_{\partial\Om}\in H^{m-\frac32}(\partial\Om)$.

Next, from the properties of the Leray projector we know that there exists some $q\in H^1(\Om)$  such that
\begin{equation*}
  \om -\PP \om =\nabla q.
\end{equation*}
Recall that $\PP \om $ is divergence free and tangent to the boundary. Applying the divergence and taking the trace to the boundary of the above relation, we observe that $q$ verifies the following Neumann problem for the laplacian:
\begin{align*}
\Delta q&=0\quad \text{in }\Om\\
\partial_n q&=\om \cdot n\quad\text{on }\partial\Om.
\end{align*}
Because $\om \cdot n\bigl|_{\partial\Om}\in H^{m-\frac32}(\partial\Om)$, the classical regularity results for the Neumann problem of the laplacian imply that $q\in H^{m}(\Om)$. This completes the proof of the first part of the lemma.

\medskip

To prove the second part, let us define  $w= \om  -\PP \om  $.  From the first part of the lemma  we know that $w\in H^{m-1}$.  Since $m\geq4$, by Sobolev embedding we have that $H^{m-1}\subset W^{1,\infty}$ so we have in particular that $w\in H^{m-1}_{co}\cap W^{1,\infty}_{co}$. Since $ \om  $ also belongs to this space, we infer that $ \PP \om  \in H^{m-1}_{co}\cap W^{1,\infty}_{co}$.

Next, since $\PP \om  $ is divergence free and tangent to the boundary one can apply \cite[Theorem 2.1]{borchers_equations_1990} to find two vector fields $\overline\psi$ and $Y$ such that
\begin{gather*}
\PP\om=\curl \overline \psi+Y,\quad \overline\psi\bigl|_{\partial\Om}=0,\\
\dive Y=0,\quad \curl Y=0,\quad Y\cdot n  \bigl|_{\partial\Om}=0.
\end{gather*}
The vector field $Y$ is obviously smooth (as a consequence of \cite[Proposition 1.4]{foias_remarques_1978} for example).
Let $h$ be the solution of
\begin{align*}
\Delta h&=\dive \overline\psi\quad\text{in }\Om\\
h&=0 \quad\text{on }\partial\Om
\end{align*}
and let us define
$$\psi_1=\overline\psi-\nabla h.$$
Because $h$ vanishes on the boundary and $n\times\nabla$ are tangential derivatives, one has that $n\times\nabla h=0$ on the boundary. From the relations above one can readily check that $\psi_1$ has the following properties:
\begin{equation*}
\curl\psi_1=\PP \om-Y  ,\quad\dive\psi_1=0\quad\text{and}\quad \psi_1\times n=0\text{ on }\partial\Om.
\end{equation*}
Because $Y$ is smooth and $\PP\om\in H^{m-1}_{co}$ we infer that  $ \curl\psi_1  \in H^{m-1}_{co}$. As in Lemma \ref{lemmareg}, one can deduce  that $\psi_1\in X^m$. Indeed, the only difference between the setting of that lemma and the present setting is that in Lemma  \ref{lemmareg} the vector field is tangent to the boundary while here it is normal to the boundary. Nevertheless, the proof goes through by replacing the elliptic estimate given in \cite[Proposition 1.4]{foias_remarques_1978} with the elliptic estimate corresponding to normal vector fields given for instance in \cite[Corollary 2.15]{amrouche_vector_1998}. So we can conclude that $\psi_1\in X^m$. From the embedding $X^2\subset L^\infty$ we further obtain that $\psi_1 \in W^{2,\infty}_{co}$. Since $\PP\om\in W^{1,\infty}_{co}$ we have that $\curl \psi_1\in W^{1,\infty}_{co}$. Recalling that $\psi_1$ is divergence free, we infer from Lemma \ref{lemalip} that $\nabla\psi_1\in W^{1,\infty}_{co}$.
Relation \eqref{f2} is completely proved.

We define next
\begin{equation*}
\psi_2=\PP(\psi-\psi_1).
\end{equation*}
From the properties of the Leray projector we know that there is some $p$ such that
$$\psi-\psi_1-\psi_2=\psi-\psi_1-\PP(\psi-\psi_1)=\nabla p.$$
Taking the curl of the above equality shows that relation \eqref{f1} holds true. Finally, we observe that
\begin{equation*}
\curl\psi_2=\curl\psi-\curl\psi_1=\om -\PP \om +Y =w+Y\in H^{m-1}(\Om).
\end{equation*}
Recalling that $\psi_2$ is also divergence free and tangent to the boundary, we can  apply \cite[Proposition 1.4]{foias_remarques_1978} to deduce that $\psi_2\in H^m$. This completes the proof.
\end{proof}

In the next proposition we use the previous lemma to construct a sequence of smooth approximations of the initial data.

\begin{proposition}\label{approxnavier}
Let $u$ be a divergence free vector field verifying the Navier boundary conditions \eqref{navier} and such that $u\in H^2$ and $\oma\in H^{m-1}_{co}\cap W^{1,\infty}_{co}$ where $m\geq4$. There exists a sequence of smooth divergence free vector fields $u_n$  verifying the Navier boundary conditions such that $u_n\to u$ in $H^2$ and such that
\begin{equation}\label{estaprox}
\nl2{u_n}+\hco{m-1}{\oma_n}+\lipco{\oma_n}\leq C(\nl2{u}+\hco{m-1}{\oma}+\lipco{\oma})
\end{equation}
for some constant $C=C(m,\Om)$.
\end{proposition}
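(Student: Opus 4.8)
The plan is to reduce the approximation of $u$ to the approximation of the vorticity-related data, and to exploit the decomposition $\oma = \curl(\psi_1+\psi_2)$ provided by Lemma \ref{leray}. First I would apply Lemma \ref{leray} to $\om^\al = \oma$ (with $\psi$ a vector potential of $\oma$, which exists since $\dive\oma=0$ and $\oma$ has the appropriate normal-trace behaviour coming from the fact that $\om^\al = \curl u - \al\Delta\curl u$). This writes $\oma = \curl\psi_1 + \curl\psi_2$ with $\psi_1 \in X^m\cap W^{2,\infty}_{co}$, $\nabla\psi_1\in W^{1,\infty}_{co}$, $\dive\psi_1 = 0$, $\psi_1\times n = 0$ on $\partial\Om$, and $\psi_2\in H^m(\Om)$. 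The point is that $\psi_2$ lives in a \emph{classical} Sobolev space, so it can be approximated by smooth vector fields $\psi_2^{(n)}$ in $H^m$, while $\psi_1$ — which carries all the ``only conormal'' regularity — need not be approximated at all: it is already smooth enough in the tangential directions and the curl of $\psi_1$ is fixed throughout. So I would set $\oma_n \equiv \curl\psi_1 + \curl\psi_2^{(n)}$ where $\psi_2^{(n)}\to\psi_2$ in $H^m(\Om)$ are smooth.

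Next I must recover an approximate velocity $u_n$ from $\oma_n$ together with the Navier boundary conditions, i.e.\ I need to invert the operator $1-\al\Delta$ on the vorticity level and then invert the curl. Concretely, I would let $\om_n$ solve $\om_n - \al\Delta\om_n = \oma_n$ with the boundary condition forced by Lemma \ref{ident}, namely $\om_n\times n = F(u_n)$ on $\partial\Om$ — this is the self-consistent (linear, since $F$ is linear in $u$) elliptic system for the pair $(u_n,\om_n)$ whose solvability is exactly what makes ``$u$ verifies Navier'' equivalent to ``$\om = \curl u$, $\om^\al = \om - \al\Delta\om$, and the compatibility identities of Lemma \ref{ident} hold''. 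Equivalently — and this is cleaner — I would work directly with the vector potentials: define $\Psi_n = \psi_1 + \psi_2^{(n)}$, so $\curl\Psi_n = \oma_n$, and then solve $u_n - \al\Delta u_n = $ (the divergence-free vector field with curl $\Psi_n$, i.e.\ $\curl$ applied to a vector potential of $\Psi_n$, projected appropriately) so that $\curl u_n - \al\Delta\curl u_n = \oma_n$. The crucial structural fact is that smoothness of $\oma_n$ (it is smooth since $\psi_1$ is $C^\infty$ in its singular part... wait — $\psi_1$ is only $X^m\cap W^{2,\infty}_{co}$, not smooth) — so in fact $\oma_n$ is \emph{not} smooth, only $\oma_n - \oma = \curl(\psi_2^{(n)} - \psi_2)\to 0$ in $H^{m-1}$. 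I would instead approximate simultaneously: take $\psi_1^{(n)}$ a mollification of $\psi_1$ preserving $\dive\psi_1^{(n)}=0$ and $\psi_1^{(n)}\times n = 0$, bounded in $X^m\cap W^{2,\infty}_{co}$ uniformly (mollification along the conormal fields, possible since $X^m$, $W^{k,\infty}_{co}$ are invariant under the conormal vector fields), and then $\oma_n = \curl\psi_1^{(n)} + \curl\psi_2^{(n)}$ is genuinely smooth.

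Once $\oma_n$ is smooth, the velocity $u_n$ defined by inverting $(1-\al\Delta)\curl\,(\cdot)$ subject to Navier is smooth by standard elliptic regularity, divergence free, and satisfies Navier by construction (here one uses that the compatibility condition from Lemma \ref{ident} is built into the inversion — this is the place where the linear boundary operator must be checked to give a well-posed elliptic problem, invoking elliptic estimates of Foias--Temam / Amrouche--Girault type as in Lemma \ref{lemmareg} and Lemma \ref{leray}). The convergence $u_n\to u$ in $H^2$ follows because $\oma_n\to\oma$ in $H^{-1}$ or better (from $\psi_2^{(n)}\to\psi_2$ in $H^m$ and $\psi_1^{(n)}\to\psi_1$ in, say, $H^1_{co}\subset$ enough), hence $u_n\to u$ in $H^2$ by the elliptic estimate for $1-\al\Delta$ composed with curl-inversion at fixed $\al$. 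Finally, the uniform bound \eqref{estaprox}: from Proposition \ref{elliptic} and Lemma \ref{lemmareg}, $\nl2{u_n}$ and $\hco{m-1}{\oma_n}$ are controlled by $\nl2{u_n} + \hco{m-1}{\oma_n}$ which in turn is controlled by the $X^m\cap W^{2,\infty}_{co}$ norm of $\psi_1^{(n)}$ plus the $H^m$ norm of $\psi_2^{(n)}$, and these are bounded by the corresponding norms of $\psi_1$, $\psi_2$ — which Lemma \ref{leray} bounds by $\|\oma\|_{H^{m-1}_{co}\cap W^{1,\infty}_{co}}$ — together with $\nl2{u}$; the $\lipco{\oma_n}$ bound comes similarly from $\lipco{\psi_1^{(n)}}$-type control via Lemma \ref{lemalip} applied to $\psi_1^{(n)}$ (whose curl is bounded in $W^{1,\infty}_{co}$) plus the Sobolev embedding $H^{m-1}\subset W^{1,\infty}$ for the $\psi_2^{(n)}$ piece. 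The uniformity of the constant $C$ in $\al$ is automatic because all the estimates used (Lemma \ref{leray}, Proposition \ref{elliptic}, Lemma \ref{lemmareg}) have $\al$-independent constants.

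\medskip

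\textbf{Main obstacle.} The delicate point is the construction of the smooth approximants $\psi_1^{(n)}$ of $\psi_1$ that \emph{simultaneously} (i) are smooth, (ii) keep $\dive\psi_1^{(n)} = 0$ and $\psi_1^{(n)}\times n = 0$ on $\partial\Om$, and (iii) are bounded uniformly in $X^m\cap W^{2,\infty}_{co}$ with $\nabla\psi_1^{(n)}$ bounded in $W^{1,\infty}_{co}$ — because the boundary condition $\psi_1\times n = 0$ couples tangential and normal components and is not preserved by naive mollification. I expect to handle this by a conormal mollification adapted to the half-space model (mollify only in the tangential variables near the boundary, using the flattening charts from Lemma \ref{clar}, then correct the divergence by solving a Dirichlet problem as in the construction of $\psi_1$ itself in Lemma \ref{leray}), and this is exactly the ``new approximation procedure within the class of divergence free vector fields'' advertised in the introduction. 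Equivalently, one may approximate at the level of $\PP\oma$ directly: approximate $\PP\oma$ in $H^{m-1}_{co}\cap W^{1,\infty}_{co}$ by smooth divergence-free tangent fields (possible since the obstruction — nonvanishing normal trace — is absent for $\PP\oma$), approximate $\oma - \PP\oma \in H^{m-1}$ in the classical Sobolev space, and reassemble. Either way, verifying that the reassembled $\oma_n$ is the curl-of-$(1-\al\Delta)^{-1}$ of a genuine Navier velocity, and that the reassembly constants are $\al$-uniform, is the crux.
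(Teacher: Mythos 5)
Your overall architecture matches the paper's: apply Lemma \ref{leray} to $\oma$ (with $\psi=v=u-\al\Delta u$, so no separate existence argument for a vector potential is needed), approximate $\psi_2$ classically in $H^m$, and recover $u_n$ from the approximated data by solving an elliptic problem with Navier boundary conditions. But the step you yourself flag as the crux --- producing smooth approximants of $\psi_1$ with uniform $H^{m-1}_{co}\cap W^{1,\infty}_{co}$ control on the curl --- is left unresolved, and the route you sketch for it (a ``conormal mollification'' that simultaneously preserves $\dive\psi_1^{(n)}=0$ and $\psi_1^{(n)}\times n=0$, or, in your alternative, a direct density assertion for smooth divergence-free tangent fields in conormal spaces) is precisely the nontrivial claim the proposition is designed to establish; asserting it as ``possible'' is circular, and a tangential-only mollification near the boundary does not obviously preserve either constraint nor give uniform bounds on the full curl, which contains a normal derivative.

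The paper's actual device is different and worth noting because it sidesteps the constraints entirely. One sets $\psi_1^\ep=\varphi(d/\ep)\,\psi_1$, which is compactly supported in $\Om$ and can therefore be smoothed by ordinary convolution; no attempt is made to keep $\psi_1^\ep$ divergence free or to preserve $\psi_1^\ep\times n=0$. The only thing that must be controlled is $\curl\psi_1^\ep=\varphi_\ep\curl\psi_1-\frac1\ep\,\psi_1\times\nabla d\;\varphi'(d/\ep)$, and the dangerous $1/\ep$ term is handled by observing that $\psi_1\times\nabla d$ vanishes on $\partial\Om$ (both vectors are normal there), so the Hardy-type Lemma \ref{clar} bounds $(\psi_1\times\nabla d)/d$ in $H^{m-1}_{co}\cap W^{1,\infty}_{co}$ using exactly the regularity $\psi_1\in X^m\cap W^{2,\infty}_{co}$, $\nabla\psi_1\in W^{1,\infty}_{co}$ furnished by Lemma \ref{leray}. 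The divergence-free condition and the Navier boundary conditions are then restored \emph{a posteriori}: one solves the Stokes-type problem $u_n-\al\Delta u_n=v_n+\nabla p_n$, $\dive u_n=0$, with \eqref{navier}, where $v_n=\psi_1^n+\psi_2^n$, and the regularity theory of \cite{busuioc_second_2003} gives smoothness of $u_n$ and $\|u_n-u\|_{H^2}\leq\nl2{v_n-\psi_1-\psi_2}\to0$, while $\oma_n=\curl v_n$ inherits the uniform bounds. Without this cutoff-plus-Hardy mechanism (or a genuine substitute), your argument has a gap at its central step.
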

\begin{proof}
Let $v=u-\al\Delta u$ so that $\oma=\curl v$. Because $\oma$ is divergence free, we can apply the previous lemma to $\oma$ to deduce the existence of some vector fields $\psi_1$ and $\psi_2$ such that
  \begin{gather*}
    \oma=\curl(\psi_1+\psi_2)\quad\text{and}\quad \psi_1+\psi_2=v-\nabla p\quad\text{for some }p,\\
\psi_1\in X^{m}\cap W^{2,\infty}_{co},\quad  \nabla\psi_1\in  W^{1,\infty}_{co},\quad \dive\psi_1=0,\quad \psi_1\times n=0\text{ on }\partial\Om,\\
\psi_2\in H^m(\Om).
  \end{gather*}

Let $\varphi:\R_+\to[0,1]$ be a smooth function such that $\varphi(s)=1$ pour $s>1$ and $\varphi(s)=0$ for $s<1/2$. We define $\varphi_\ep(x)=\varphi(d/\ep)$ and $\psi_1^\ep=\varphi_\ep\psi_1$. Clearly $\psi_1^\ep\to\psi_1$ in $L^2$ as $\ep\to0$. Moreover, we claim that $\curl \psi_1^\ep$ is bounded in $H^{m-1}_{co}\cap W^{1,\infty}_{co}$ uniformly in $\ep$. To prove this, we start by writing
\begin{equation*}
\curl  \psi_1^\ep=\varphi_\ep\curl\psi_1-\psi_1\times\nabla\varphi_\ep
=\varphi_\ep\curl\psi_1-\frac1\ep\psi_1\times\nabla d\ \varphi'\Bigl(\frac d\ep\Bigr).
\end{equation*}

We remark now that for every $k\in\N$ the functions $\varphi_\ep$ are bounded in  $W^{k,\infty}_{co}$ uniformly in $\ep$. Indeed, if $\Z$ is a tangential derivative, we have that
\begin{equation*}
  \Z\varphi_\ep=\frac{\Z d}\ep\varphi'\Bigl(\frac d\ep\Bigr).
\end{equation*}
Since $d$ vanishes on the boundary and $\Z$ is a tangential derivative we have that $\Z d$ vanishes on the boundary. Because the support of $\varphi'(d/\ep)$ is included in $\Om_\ep$ for $\ep$ sufficiently small, the mean value theorem implies that $|\Z d|\leq C\ep\|d\|_{W^{2,\infty}(\Om_\delta)}$ on the support of  $\varphi'(d/\ep)$ (we assumed that $\ep$ is sufficiently small). So $\Z\varphi_\ep$ is uniformly bounded in $\ep$ and a similar argument works for the higher order tangential derivatives of $\varphi_\ep$.

Since $\varphi_\ep$ is bounded in  $W^{k,\infty}_{co}$ uniformly in $\ep$ and $\curl\psi_1\in H^{m-1}_{co}\cap W^{1,\infty}_{co}$, the Leibniz formula immediately implies that $\varphi_\ep\curl\psi_1$ is bounded in $ H^{m-1}_{co}\cap W^{1,\infty}_{co}$ uniformly in $\ep$.

We remark next that since $d$ vanishes on the boundary, its gradient is normal to the boundary. But $\psi_1$ is also normal to the boundary, so $\psi_1\times\nabla d$ vanishes on the boundary. We can therefore apply Lemma \ref{clar} to deduce that
\begin{align*}
C\bigl\|\frac{\psi_1\times\nabla d}d\bigr\|_{H^{m-1}_{co}\cap W^{1,\infty}_{co}}
&\leq C(\|\psi_1\times\nabla d\|_{H^{m-1}_{co}\cap W^{1,\infty}_{co}}+\|\partial_n(\psi_1\times\nabla d)\|_{H^{m-1}_{co}\cap W^{1,\infty}_{co}})\\
&\leq C(\xm m{\psi_1}+\lipco{\psi_1}+\lipco{\nabla\psi_1}).
\end{align*}
As above, one can easily check that $\frac d\ep \varphi'(\frac d\ep)$ is bounded independently of $\ep$ in any $W^{k,\infty}_{co}$. We conclude by the Leibniz formula that $\frac1\ep\psi_1\times\nabla d\ \varphi'(\frac d\ep)=\frac{\psi_1\times\nabla d}d \ \frac d\ep  \varphi'(\frac d\ep)$ is bounded independently of $\ep$ in  $ H^{m-1}_{co}\cap W^{1,\infty}_{co}$.

We infer from the previous relations that $\curl\psi_1^\ep$ is bounded independently of $\ep$ in  $ H^{m-1}_{co}\cap W^{1,\infty}_{co}$. Since $\psi_1^\ep$ is compactly supported in $\Omega$, it can be smoothed out by convolution with an approximation of the identity. Letting $\ep\to0$ afterwards, one can construct a sequence $\psi_1^n$ of smooth vector fields such that $\psi_1^n\to\psi_1$ in $L^2$ and such that $\curl\psi_1^n$ is bounded in  $ H^{m-1}_{co}\cap W^{1,\infty}_{co}$.

Next, by density of smooth functions in $H^m$, there exists a sequence of smooth vector fields $\psi_2^n$ such that $\psi_2^n\to \psi_2$ in $H^m$. Since $m\geq4$ we have the Sobolev embedding $H^m\subset W^{2,\infty}$ so $\curl\psi_2^n$ is bounded in  $ H^{m-1}\cap W^{1,\infty}$.
Let $v_n=\psi_1^n+\psi_2^n$. Then $v_n\to \psi_1+\psi_2$ in $L^2$ and $\curl v_n$ is bounded in  $ H^{m-1}_{co}\cap W^{1,\infty}_{co}$. Let $u_n$ be the solution of the following Stokes problem:
\begin{equation*}
  u_n-\al\Delta u_n=v_n+\nabla p_n, \quad
\dive u_n=0, \quad u_n\text{ verifies the Navier boundary conditions \eqref{navier}}.
\end{equation*}
Since $\psi_1+\psi_2=v-\nabla p$, we observe that $u$ verifies the following Stokes problem:
\begin{equation*}
  u-\al\Delta u=\psi_1+\psi_2+\nabla p, \quad
\dive u=0, \quad u\text{ verifies the Navier boundary conditions \eqref{navier}}.
\end{equation*}

But regularity results for the above Stokes problem are known. We can deduce for instance from \cite[Theorem 3]{busuioc_second_2003} that $\|u_n-u\|_{H^2}\leq \nl2{v_n-\psi_1-\psi_2}\to0$. Since $v_n$ is smooth, the same theorem also implies that $u_n$ is smooth. Moreover, $\om_n^\al=\curl (u_n-\al\Delta u_n)=\curl v_n$ is bounded in  $ H^{m-1}_{co}\cap W^{1,\infty}_{co}$. Finally, one can also easily keep track of the estimates in the above arguments and deduce that relation \eqref{estaprox} holds true for some constant $C$. The sequence $u_n$ has all required properties and this completes the proof.
\end{proof}

This proposition allows us to finish the proof of Theorem \ref{uniformtime}.
\begin{proof}[Proof of Theorem \ref{uniformtime}]
From the previous proposition, we deduce the existence of a sequence of smooth  velocity fields $u_0^n$ verifying the Navier boundary conditions such that $u^n_0\to u_0$ in $H^2$ and such that
\begin{equation*}
\nl2{u^n_0}+\hco{m-1}{\om^{\al,n}_0}+\lipco{\om^{\al,n}_0}\leq C(\nl2{u_0}+\hco{m-1}{\oma_0}+\lipco{\oma_0})
\end{equation*}

Using the result of \cite{busuioc_second_2003}, one can construct a local solution $u^n$ with initial velocity  $u_0^n$. This solution is smooth. Indeed, even though the result of \cite{busuioc_second_2003} is stated only in $H^3$ it easily goes through to any $H^m$ with $m\geq3$. Moreover, the blow-up of the solution cannot occur while the Lipschitz norm of the solution is bounded. On these smooth solutions, the \textit{a priori} estimates proved in Proposition \ref{apriori1}  are valid. Therefore, we obtain a bound on the quantity $L(t)$ on a time interval $[0,T_n]$ of size
\begin{equation*}
T_n=\frac C{\nl2{u^n_0}+\hco{m-1}{\om^{\al,n}_0}+\lipco{\om^{\al,n}_0}}\geq  \frac C{\nl2{u_0}+\hco{m-1}{\om^{\al}_0}+\lipco{\om^{\al}_0}}\equiv T.
\end{equation*}

In particular, we control the Lipschitz norm of the solution on $[0,T]$ where $T$ does not depend on $n$. Because the Lipschitz norm of $u_n$ is bounded uniformly in $n$ on the time interval $[0,T]$, we infer that the solution of \eqref{maineq} and \eqref{navier} exists at least up to the time $T$. Finally, given that the solutions are  bounded in $H^3$ with respect to $n$, passing to the limit as $n\to\infty$ on $[0,T]$ is quite simple and standard. This completes the proof of Theorem  \ref{uniformtime}.
\end{proof}

As mentioned in the introduction, Theorem \ref{convergence} is a direct consequence of Theorem \ref{uniformtime} and of \cite[Theorem 5]{busuioc_incompressible_2012}.

Finally, to complete the proof of Theorem \ref{euler} one can turn in a similar manner the \textit{a priori} estimates of Proposition \ref{apriori2} into a rigorous result of existence of solutions provided that we can construct a suitable sequence of smooth velocity fields approximating the initial velocity field. This is performed in the next proposition.

\begin{proposition}\label{approxeuler}
Let $u\in X^m$, $m\geq4$, be a divergence free vector field tangent to the boundary such that $\om=\curl u \in W^{1,\infty}_{co}$. There exists a sequence of smooth divergence free vector fields $u_n$  tangent to the boundary such that $u_n\to u$ in $L^2$ and such that
\begin{equation}\label{boundeuler}
\xm m{u_n}+\lipco{\om_n}\leq C(\xm mu +\lipco{\om})
\end{equation}
for some constant $C=C(m,\Om)$.
\end{proposition}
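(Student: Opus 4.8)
The plan is to follow the proof of Proposition~\ref{approxnavier}, the only substantial change being that the Stokes problem used there is unavailable at $\al=0$ and must be replaced by the Leray projector $\PP$. The first observation is that the field to which Lemma~\ref{leray} should be applied is not $u$ but its vorticity $\om=\curl u$: this is divergence free, it belongs to $W^{1,\infty}_{co}$ by hypothesis, and it belongs to $H^{m-1}_{co}$ because $u\in X^m$ (the condition $\nabla u\in H^{m-1}_{co}$ in the definition of $X^m$ is exactly what controls $\curl u$), with $\|\om\|_{H^{m-1}_{co}}\leq C\xm mu$. Taking $u$ itself as a vector potential, Lemma~\ref{leray} (with $m\geq4$) produces $\psi_1,\psi_2$ with $\om=\curl(\psi_1+\psi_2)$, $\psi_1+\psi_2=u-\nabla p$ for some $p$, and $\psi_1\in X^m\cap W^{2,\infty}_{co}$, $\nabla\psi_1\in W^{1,\infty}_{co}$, $\dive\psi_1=0$, $\psi_1\times n=0$ on $\partial\Om$, $\psi_2\in H^m(\Om)$.

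Next I approximate $\psi_1$ and $\psi_2$ by smooth fields exactly as in the proof of Proposition~\ref{approxnavier}. With the cutoff $\varphi_\ep(x)=\varphi(d(x)/\ep)$ vanishing near $\partial\Om$, set $\psi_1^\ep=\varphi_\ep\psi_1$. Since $\psi_1\times n=0$ on $\partial\Om$ and $\nabla d$ is normal to $\partial\Om$, the field $\psi_1\times\nabla d$ vanishes on the boundary, so Lemma~\ref{clar} applies; combined with the uniform-in-$\ep$ bounds for $\varphi_\ep$ in every $W^{k,\infty}_{co}$, this gives that $\curl\psi_1^\ep=\varphi_\ep\curl\psi_1+\nabla\varphi_\ep\times\psi_1$ is bounded in $H^{m-1}_{co}\cap W^{1,\infty}_{co}$ uniformly in $\ep$, while $\psi_1^\ep\to\psi_1$ in $L^2$. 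Mollifying the compactly supported $\psi_1^\ep$ and extracting a diagonal sequence produces smooth $\psi_1^n$ with $\psi_1^n\to\psi_1$ in $L^2$ and $\curl\psi_1^n$ bounded in $H^{m-1}_{co}\cap W^{1,\infty}_{co}$. By density of smooth functions in $H^m$ and, since $m\geq4$, the embedding $H^m\subset W^{2,\infty}$, choose smooth $\psi_2^n\to\psi_2$ in $H^m$ with $\curl\psi_2^n$ bounded in $H^{m-1}\cap W^{1,\infty}\subset H^{m-1}_{co}\cap W^{1,\infty}_{co}$.

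Finally set $u_n=\PP(\psi_1^n+\psi_2^n)$. Elliptic regularity for the Neumann problem defining $\PP$ shows that $u_n$ is a smooth divergence free vector field tangent to the boundary, and since $\psi_1^n+\psi_2^n\to\psi_1+\psi_2=u-\nabla p$ in $L^2$, $\PP$ is bounded on $L^2$, $\PP\nabla p=0$, and $\PP u=u$, we get $u_n\to u$ in $L^2$. The key point for the estimate is that $\PP$ alters a field only by a gradient, so $\om_n:=\curl u_n=\curl(\psi_1^n+\psi_2^n)=\curl\psi_1^n+\curl\psi_2^n$ is bounded in $H^{m-1}_{co}\cap W^{1,\infty}_{co}$; in particular $\lipco{\om_n}$ is controlled, and because $u_n$ is divergence free and tangent to the boundary, Lemma~\ref{lemmareg} gives $\xm m{u_n}\leq B_{m-1}\bigl(\nl2{u_n}+\hcom{m-1}{\om_n}\bigr)$, which is controlled as well. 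Keeping track of the constants through Lemma~\ref{leray} (the auxiliary smooth fields appearing there have norms bounded by $\nl2\om\leq C\xm mu$) and through the two approximations yields \eqref{boundeuler}. The main obstacle, and the only genuine difference from Proposition~\ref{approxnavier}, is precisely this last step: there the divergence free, boundary-compatible approximant was recovered by the elliptic regularity of $1-\al\Delta$, whereas at $\al=0$ the Leray projector does the job because it leaves $\om_n$ — the quantity whose conormal regularity must be preserved — untouched, after which $X^m$ control of $u_n$ is automatic from Lemma~\ref{lemmareg}.
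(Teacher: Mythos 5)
Your proposal is correct and follows essentially the same route as the paper: apply Lemma \ref{leray} to $\om$ with $u$ as vector potential, approximate $\psi_1$ by cutoff and mollification and $\psi_2$ by $H^m$ density exactly as in Proposition \ref{approxnavier}, then set $u_n=\PP(\psi_1^n+\psi_2^n)$ and use that $\PP$ only changes the field by a gradient together with Lemma \ref{lemmareg} to recover the $X^m$ bound. The only additions beyond the paper's write-up are explicit remarks (smoothness of $u_n$ via the Neumann problem, $\PP\nabla p=0$) that the paper leaves implicit.
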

\begin{proof}
We apply Lemma \ref{leray} to $\om$ to find two vector fields $\psi_1$ and $\psi_2$ such that:
  \begin{gather*}
    \om =\curl(\psi_1+\psi_2)\quad\text{and}\quad \psi_1+\psi_2=u-\nabla p\quad\text{for some }p,\\
\psi_1\in X^{m}\cap W^{2,\infty}_{co},\quad  \nabla\psi_1\in W^{1,\infty}_{co},\quad \dive\psi_1=0,\quad \psi_1\times n=0\text{ on }\partial\Om,\\
\psi_2\in H^m(\Om).
  \end{gather*}

As in the proof of Proposition \ref{approxnavier}, we can construct  two sequences of smooth vector fields $\psi_1^n$ and $\psi_2^n$  such that
\begin{itemize}
\item $\psi_1^n\to\psi_1$ in $L^2$;
\item $\curl\psi_1^n$ is bounded in  $ H^{m-1}_{co}\cap W^{1,\infty}_{co}$;
\item  $\psi_2^n\to \psi_2$ in $H^m$;
\item  $\curl\psi_2^n$ is bounded in  $ H^{m-1}\cap W^{1,\infty}$.
\end{itemize}

We define
\begin{equation*}
  u_n=\PP(\psi_1^n+\psi_2^n).
\end{equation*}
Because $\psi_1^n$ and $\psi_2^n$ are bounded in $L^2$, so is $u_n$. Moreover, since $u_n$ and $\psi_1^n+\psi_2^n$ differ by a gradient we have that $\om_n=\curl u_n=\curl (\psi_1^n+\psi_2^n)$ is bounded in $ H^{m-1}_{co}\cap W^{1,\infty}_{co}$. From Lemma \ref{lemmareg} we infer that $u_n$ is bounded in $X^m$. Next, since $\PP$ is a  bounded operator on $L^2$ we have that
\begin{equation*}
  \lim_{n\to\infty}u_n=\lim_{n\to\infty}\PP(\psi_1^n+\psi_2^n)=\PP\lim_{n\to\infty}(\psi_1^n+\psi_2^n)
=\PP(\psi_1+\psi_2)=\PP(u-\nabla p)=u\quad\text{in }L^2.
\end{equation*}
Keeping track of the estimates one can deduce relation \eqref{boundeuler} for some constant $C$. This completes the proof.
\end{proof}

We can now complete the proof of the last theorem in this paper.
\begin{proof}[Proof of Theorem \ref{euler}]
According to Proposition \ref{approxeuler}, there exists a sequence $u^n_0$ of smooth divergence free vector fields tangent to the boundary such that $u^n_0\to u_0$ in $L^2$ and
\begin{equation*}
\xm4{u^n_0}+\lipco{\om^n_0}\leq C(\xm4{u_0}+\lipco{\om_0}).
\end{equation*}
One can construct a smooth local in time solution $u^n$ of the Euler equation \eqref{eulereq}, \eqref{tangent} with initial data $u_0$. By the Beale-Kato-Majda criterion, the solution does not blow-up as long as the Lipschitz norm of $u_n$ does not blow-up. The \textit{a priori} estimates of Proposition \ref{apriori2} hold true. By the Gronwall lemma, the quantity $M(t)$ stays bounded on a time interval $T_n$ such that
\begin{equation*}
T_n=\frac C{\nl2{u^n_0}+\|\om^n_0\|_{H^3_{co}\cap W^{1,\infty}_{co}}}
\geq \frac C{\xm4{u^n_0}+\lipco{\om^n_0}}
\geq \frac C{\xm4{u_0}+\lipco{\om_0}}
\equiv T.
\end{equation*}
So the solution $u^n$ exists up to time $T$ and its Lipschitz norm is bounded on $[0,T]$. Then one can easily pass to the limit and show that $u^n$ converges to a solution of the Euler equation with the required properties.
\end{proof}

\section{A final remark}

We begin this section with the observation that it is possible to extend our result on the uniform time of existence to the second grade fluid equations, given by
\begin{equation}\label{secgrade}
  \partial_t (u-\al\Delta u)-\nu\Delta u+u\cdot\nabla(u-\al\Delta u)+\sum_j(u-\al\Delta u)_j\nabla u_j=-\nabla p, \qquad \dive u=0,
\end{equation}
as long as  $\nu / \al$ is bounded. Indeed, the vorticity equation can be written under the form
\begin{equation*}
  \dt\oma+\frac\nu\al\oma-\frac\nu\al\om+u\cdot\nabla\oma-\oma\cdot\nabla u=0.
\end{equation*}
If $\nu/\al$ is bounded, then the two additional terms are not worse than the others so that estimates similar to the ones developed above hold true, giving the same results.
Putting this together with \cite[Theorem 5]{busuioc_incompressible_2012} we obtain that, under this restriction on $\nu$, $\al$, the limit of solutions of the second grade fluid equations is a solution of the Euler equations.

Note that the second grade fluid equations are an interpolant between the Navier-Stokes equations ($\al = 0$) and the $\al$-Euler equations ($\nu=0$). The work by Masmoudi and Rousset refer to the  extremal $\al=0$, while the results contained in our paper, together with the extension discussed above, correspond to the cases $\nu=\mathcal{O}(\al)$. This raises the possibility that, combining our arguments with those of \cite{masmoudi_uniform_2012-1}, a general result for the second grade fluid equations could be obtained.

One additional problem left open is to extend this work to Navier boundary conditions with nonzero friction coefficient, such as were treated in \cite{masmoudi_uniform_2012-1}.

\bigskip

\subsection*{Acknowledgments} The authors would like to thank an anonymous referee for \cite{busuioc_incompressible_2012} for the suggestion to use conormal Sobolev spaces. A.V.B. and D.I. are grateful for the hospitality of the Universidade Federal do  Rio de Janeiro and of IMPA, while M.C.L.F. and H.J.N.L. thank the hospitality of the Université Claude Bernard Lyon 1. This work was funded in part by the Réseau Franco-Brésilien en Mathématiques.  D.I. has been partially funded by the ANR project Dyficolti ANR-13-BS01-0003-01. H.J.N.L.'s research has been funded in part by CNPq Grant \# 307918/2014-9 and by FAPERJ Grant \# E-26/103.197/2012. The work of M.C.L.F. has been partially funded by CNPq Grant \# 306886/2014-6.

\bibliographystyle{myabbrven}
\bibliography{zotero}
\end{document}